\providecommand{\U}[1]{\protect\rule{.1in}{.1in}}
\newtheorem{theorem}{Theorem}[section]
\newtheorem*{theorem*}{Theorem}
\newtheorem{corollary}[theorem]{Corollary}
\newtheorem{definition}[theorem]{Definition}
\newtheorem{example}[theorem]{Example}
\newtheorem{lemma}[theorem]{Lemma}
\newtheorem{proposition}[theorem]{Proposition}
\newtheorem{notation}[theorem]{Notation}
\newtheorem{remark}[theorem]{Remark}
\numberwithin{equation}{section}
\thanks{\footnotemark {$^\dagger$} This research was supported in part by NSF
Grant DMS-1106270.}
\email{bdriver@math.ucsd.edu}
\thanks{\footnotemark {$^*$} This research was supported in part by NSF Grant
DMS-0739164.}
\email{neldredge@unco.edu}
\thanks{\footnotemark {$^{\dagger\dagger}$} This research was supported in part by NSF
Grants DMS-0907293 and DMS-1255574.}
\email{melcher@virginia.edu}
\keywords{Heisenberg group, hypoelliptic, heat kernel, smooth measures}
\subjclass[2010]{Primary 58J35; Secondary
58J65, 60B15}
\begin{document}
\title{Hypoelliptic heat kernels on infinite-dimensional Heisenberg groups}
\author[Driver]{Bruce K.~Driver{$^{\dagger}$}}
\address[Bruce K.~Driver]{Department of Mathematics \\
University of California, San Diego \\
La Jolla, California 92093 USA }
\author[Eldredge]{Nathaniel Eldredge{$^{*}$}}
\address[Nathaniel Eldredge]{Department of Mathematics \\ Cornell
  University \\ Ithaca, New York
  14853 USA \\ and \\ School of Mathematical Sciences\\
University of Northern Colorado \\
Greeley, Colorado 80639 USA }
\author[Melcher]{Tai Melcher{$^{\dagger\dagger}$}}
\address[Tai Melcher]{Department of Mathematics\\
University of Virginia\\
Charlottesville, Virginia 22904 USA}

\begin{abstract}
We study the law of a hypoelliptic Brownian motion on an infinite-dimensional
Heisenberg group based on an abstract Wiener space. We show that the endpoint
distribution, which can be seen as a heat kernel measure, is absolutely
continuous with respect to a certain product of Gaussian and Lebesgue measures,
that the heat kernel is quasi-invariant under translation by the
Cameron--Martin subgroup, and that the Radon--Nikodym derivative is Malliavin smooth.

\end{abstract}
%\date{\today \ \emph{File:} \jobname{.tex}}
\maketitle
\tableofcontents

\section{Introduction\label{h.sec.1}}

In \cite{A35} (also see \cite{A38,A37}), M.~Gordina and the first
author began studying the properties of elliptic heat kernel measures
on certain infinite-dimensional Heisenberg-like groups. There it
was shown that these heat kernel measures enjoyed many of the
quasi-invariance and other smoothness properties found in
finite-dimensional settings and in commutative abstract Wiener space
examples. Later Baudoin, Gordina, and the third author \cite{Baudoin2013}
proved similar results for hypoelliptic heat kernel measures on
infinite-dimensional Heisenberg-like groups. The quasi-invariance
result proved there relied on detailed curvature-dimension inequalities
first suggested by Baudoin, Bonnefont, and Garofalo
\cite{Baudoin2009,Baudoin2010}.

The aim of the current paper is to revisit the hypoelliptic setting
and to show that pure stochastic calculus techniques may be used
to re-prove and actually strengthen the main results for heat kernel
measures in \cite{A35} and \cite{Baudoin2013}. This is done by
developing a concrete representation for the heat kernel measure,
which allows us to show that, in fact, these measures satisfy a
strong definition of smoothness, as given for example in
\cite{Driver2003}. Typically such smoothness results have been
unavailable in the infinite-dimensional subelliptic context and
alternative interpretations must be made (for example, as smoothness
of all appropriate finite-dimensional projections of the measure;
see \cite{BaudoinTeichmann2005,Malliavin1990a,MattinglyPardoux2006}).
To the authors' knowledge, the smoothness results in the present
paper are the first of their type in the infinite-dimensional
subelliptic setting.

\subsection{Heat kernel measures on finite-dimensional Lie groups}

As motivation, let us
briefly recall the finite-dimensional situation.
Let $G$ be a finite-dimensional simply connected Lie group, and let
$\mathfrak{g}$ be the Lie algebra of $G$, identified with the set of
left-invariant vector fields on $G$. Suppose $\{X_{j}\}_{j=1}^{k}
\subset\mathfrak{g}$ is such that $\mathrm{Lie}(\{X_{j}\}_{j=1}^{k})
=\mathfrak{g}$. Then $\{X_{j}\}_{j=1}^{k}$ satisfies the ``bracket
generating'' hypothesis of H\"ormander's Theorem \cite{hormander67}, which
then asserts that the ``sub-Laplacian'' operator $L := X_{1}^{2} + \dots+
X_{k}^{2}$ is \textbf{hypoelliptic}: if $\phi$ is a distribution such that $L
\phi\in C^{\infty}(G)$, then in fact $\phi\in C^{\infty}(G)$. Similarly, the
fundamental solution or heat kernel $p_{t}(x,y)$ of the heat equation $u_{t} -
\frac{1}{2} Lu = 0$ is $C^{\infty}$. In some examples, one can write down an
explicit integral formula for $p_{t}$ from which its smoothness is apparent.
These formulae have been derived many times in the literature; as a small
sample, we mention \cite{gaveau77, randall, lust-piquard, abgr-intrinsic}.

Furthermore, the heat kernel $p_{t}(x,y)$ is strictly positive for all $x,y
\in G$ and $t > 0$. Stated in a more sophisticated way, the heat kernel
measure $p_{t}(e, \cdot)\,dm$, where $m$ is Haar measure, is quasi-invariant
under the action of $G$ on itself by left or right translation. This strict
positivity is typically not immediately apparent even in the settings where an
explicit formula for the heat kernel is available, as these formulae often
involve oscillatory integrals. (However, see \cite{ccgl-positivity-heisenberg}
for an elementary proof of strict positivity for the heat kernel on the real
three-dimensional Heisenberg group.) More generally, strict positivity can be
shown to hold through deeper means, for instance, by the use of parabolic
Harnack inequalities; see for example \cite{vsc-book} and references therein.
On the other hand, for the formula we derive in this paper (Corollary
\ref{gamma-formula}), which applies to step-two nilpotent Lie groups (of
potentially infinite-dimension), strict positivity is obvious by inspection
(see Corollary \ref{gamma-positive}).

In the finite-dimensional nilpotent setting, one can obtain rather precise
information about the integrability of $p_{t}$ and its derivatives; in
particular, $p_{t}$ has Gaussian decay at infinity, with respect to the
Carnot--Carath\'eodory distance on $G$. This is proved in a general setting in
\cite{vsc-book}; in the special case of H-type groups, sharper estimates were
obtained in \cite{eldredge-precise-estimates}. In the infinite-dimensional
setting of the present paper, we are not able to obtain such precise results,
but we are able to derive a Fernique-type theorem (Theorem
\ref{fernique-theorem}), and that together with the $L^{p}$-integrability of
derivatives (Theorem \ref{h.the.7.12}) point roughly in the same direction.

One may think of $p_{t}$ more probabilistically as the end point distribution
of a Brownian motion on $G$. That is, define a Brownian motion on $G$ to be
the unique process $g_{t}$ starting at the identity of $G$ and solving the
Stratonovich stochastic differential equation
\[
\delta g_{t} = \sum_{i} X_{i}(g_{t}) \,\delta B_{t}^{(i)}%
\]
where $B_{t} = (B_{t}^{(1)}, \dots, B_{t}^{(k)})$ is a standard $k$%
-dimensional Brownian motion. Then $L$ is the generator of the Markov process
$g_{t}$, and $p_{t}$ is the transition density of $g_{t}$. In particular, for
each $t$, the law of $g_{t}$ is absolutely continuous with respect to Haar
measure, and its density $p_{t}(e, \cdot)$ is strictly positive and
$C^{\infty}$. Intuitively, despite being driven by a Brownian motion whose
dimension is in general smaller than that of $G$, the process $g_{t}$ is still
able to wander throughout the group $G$.

The purpose of this paper is to obtain, by fairly elementary methods,
analogous results for a class of infinite-dimensional Heisenberg-like groups
modeled on an abstract Wiener space. A key idea is to replace Haar measure,
which no longer exists on our infinite-dimensional groups, by Gaussian
measures on the relevant abstract Wiener space.

We end this section with some standard notation that we will use throughout
the rest of the paper. If $\left(  \Omega,\mathcal{F},\mu\right)  $ is a
probability space and $X:\Omega\rightarrow\mathbb{R}$ is a random variable we
will denote $\int_{\Omega}Xd\mu$ by either $\mathbb{E}X,$ $\mathbb{E}_{\mu}X$,
or simply by $\mu\left(  X\right) $.

\subsection{Heat kernel measures on infinite-dimensional Heisenberg-like
groups}

Here we recall infinite-dimensional Heisenberg-like groups as first
constructed in \cite{A35}. We also define hypoelliptic Brownian motion and
heat kernel measure on these spaces. Infinite-dimensional Heisenberg-like
groups are central extensions of an abstract Wiener space, for which we record
here a standard definition.

\begin{definition}
\label{h.the.1.1}Let $(W,H,\mu)$ be a real \textbf{abstract Wiener space},
that is, $W$ is a separable Banach space, $H$ is a Hilbert space (known as the
\textbf{Cameron--Martin subspace}) which embeds continuously into $W$, and
$\mu$ is the Gaussian Borel measure on $W$ determined by
\[
\mathbb{E}_{\mu}\left[  e^{i\varphi}\right]  =\exp\left(  -\frac{1}%
{2}\left\Vert \varphi|_{H}\right\Vert _{H^{\ast}}^{2}\right)  \text{ for all
}\varphi\in W^{\ast}.
\]
Thus, the covariance of $\mu$ is determined by the inner product of $H$. We
shall assume for simplicity that $(W,H,\mu)$ is non-degenerate: $H$ is dense
in $W$, and (equivalently) the support of the measure $\mu$ is all of $W$.
\end{definition}

\begin{notation}
\label{Wstar} The adjoint of the continuous dense embedding $H \to W$ is a
continuous dense embedding $W^{\ast}\to H$. We denote by $W_{\ast}$ the image
of $W^{\ast}$ in $H$ under this embedding. Equivalently, $W_{\ast}$ consists
of those $h \in H$ such that the linear functional $\langle\cdot, h \rangle\in
H^{*}$ extends continuously to $W$. Recall, though, that even for $h \in H
\setminus W_{\ast}$, the ``functional'' $W \ni x \mapsto\langle x, h
\rangle_{H}$ makes sense as an element of $L^{2}(W, \mu)$.
\end{notation}

\begin{notation}
\label{mut} For $T > 0$, let $\mu_{T}$ be the dilation of $\mu$ defined by
$\mu_{T}(A) = \mu(T^{-1/2} A)$. ($\mu_{0}$ is then a point mass at 0.)
\end{notation}

For further background on abstract Wiener space and Gaussian measures, see for
example \cite{Kuo75, bogachev-gaussian-book}.

\begin{definition}
\label{h.the.1.2}Let $C$ be a finite-dimensional real Hilbert space $C$ of
dimension $d$, whose inner product $C$ is denoted by \textquotedblleft%
\ $\cdot$ \textquotedblright, and suppose that $\omega: W \times W \to C$ is a
continuous, skew-symmetric bilinear form. As explained in \cite[Proposition
3.14]{A35}, we may make $W\times C$ into a Banach Lie group $G$ using the
group multiplication law
\[
\left(  w_{1},c_{1}\right)  \left(  w_{2},c_{2}\right)  =\left(  w_{1}%
+w_{2},c_{1}+c_{2}+\frac{1}{2}\omega\left(  \omega_{1},w_{2}\right)  \right)
.
\]
We shall call such a group \textbf{Heisenberg-like}, by analogy with the
classical Heisenberg group (see Example \ref{H3-example} below). The identity
of $G$ is $(0,0)$, which we denote as $\mathbf{e}$, and the inverse operation
is given by $(w,c)^{-1} = (-w, -c)$. The subset $G_{CM} := H \times C$ is a
subgroup of $G$, which we call the \textbf{Cameron--Martin subgroup}.

The Lie algebra $\mathfrak{g}$ of $G$ may be identified with $W\times C$
equipped with the Lie bracket defined by
\[
\lbrack\left(  w_{1},c_{1}\right)  ,\left(  w_{2},c_{2}\right)  ]=\left(
0,\omega\left(  w_{1},w_{2}\right)  \right)
\]
for all $w_{1},w_{2}\in W$ and $c_{1},c_{2}\in C$. Then $\mathfrak{g}$ is a
nilpotent Banach Lie algebra of step 2. Under this identification, the
exponential map is the identity. The subset $\mathfrak{g}_{CM} := H \times C$
is a Lie subalgebra of $\mathfrak{g}$, which we call the
\textbf{Cameron--Martin subalgebra}.

Throughout this paper, we shall assume that $\omega$ is surjective, or in
other words that the Lie algebra $\mathfrak{g}$ is generated by its subspace
$W \times\{0\}$. This is the analogue of H\"ormander's bracket generating condition.
\end{definition}

\begin{example}
\label{H3-example} If $W = \mathbb{R}^{2}$, $C = \mathbb{R}$, and $\omega$ is
defined by
\[
\omega((x_{1}, y_{1}), (x_{2}, y_{2})) = x_{1} y_{2} - x_{2} y_{1}%
\]
then the group $G$ constructed above is (isomorphic to) the classical
3-dimensional Heisenberg group $\mathbb{H}_{3}$.
\end{example}

\begin{definition}
\label{def-cylinder} A \textbf{smooth cylinder function} on $G$ is a function
$F : G \to\mathbb{R}$ of the form
\[
F(x,c) = \psi(f_{1}(x), \dots, f_{n}(x), c)
\]
for some $n \ge0$, $f_{1}, \dots, f_{n} \in W^{*}$, and $\psi\in C^{\infty
}_{c}(\mathbb{R}^{n} \times C)$.
\end{definition}

\begin{notation}
\label{H-derivative} Given $h \in H$ and $z \in C$, we can compute the partial
derivative of a smooth cylinder function $F$ in the $(h,z)$ direction as
\[
\partial_{(h,z)} F(x,c) = \sum_{i=1}^{n} \partial_{i} \psi(f_{1}(x), \dots,
f_{n}(x), c) \langle f_{i},h \rangle_{H} + \partial_{z} F(x,c).
\]
Note that in the inner product $\langle f_{i}, h \rangle_{H}$ we are
identifying $f_{i} \in W^{*}$ with its image in $W_{\ast}\subset H$ as
explained in Notation \ref{Wstar}. We observe that $\partial_{(h,z)} F$ is
another smooth cylinder function.
\end{notation}

\begin{notation}
\label{vector-fields} We may identify each $X=(h,z)\in\mathfrak{g}_{CM}$ with
a left-invariant vector field $\tilde{X}$, or first-order differential
operator, acting on the cylinder functions on $G$ via
\[
\tilde{X}F(g)=\partial_{(h,z)}(F\circ L_{g})(\mathbf{e}).
\]
where $L_{g}$ is left translation by $g\in G$, that is, $L_{g}(k)=gk$. A
simple computation \cite[Proposition 3.7]{A35} shows that
\begin{equation}
\tilde{X}F(x,c)=\left(  \partial_{\left(  h,z+\frac{1}{2}\omega(x,h)\right)
}F\right)  (x,c). \label{vector-field-omega}%
\end{equation}

\end{notation}

We may now define a group Brownian motion on $G$ and the associated heat
kernel measure.

\begin{definition}
\label{W-Brownian-def} Let $\{B_{t}\}_{t \ge0}$ be a \textbf{$W$-valued
standard Brownian motion}. That is, $\{B_{t}\}$ is a continuous, adapted,
$W$-valued stochastic process defined on some filtered probability space
$(\Omega, \mathcal{F}, \{\mathcal{F}_{t}\}_{t \ge0}, \mathbb{P})$, such that
for all $0 \le s \le t$, we have that $B_{t} - B_{s}$ is independent of
$\mathcal{F}_{s}$ and has $\mu_{t-s}$ as its distribution, where $\mu_{t}$ is
as in Notation \ref{mut}.
\end{definition}

See \cite{Kuo75} for more information about Brownian motion on abstract Wiener space.

\begin{definition}
A \textbf{hypoelliptic Brownian motion on $G$} is the $G$-valued stochastic
process $\{g_{t}\}_{t\geq0}$ which is the solution to the stochastic
differential equation,%
\[
dg_{t}=(L_{g_{t}})_{\ast}\delta B_{t},\quad g_{0}=e
\]
where $(L_{x})_{\ast}$ denotes the differential of left translation by $x\in
G$. (This stochastic differential equation can be interpreted in either the
sense of It\^{o} or Stratonovich; the solutions in the step two nilpotent
setting are the same.) The solution may be written formally as%
\begin{equation}
g_{t}=\left(  B_{t},\frac{1}{2}\int_{0}^{t}\omega\left(  B_{s},dB_{s}\right)
\right)  . \label{gt-integral}%
\end{equation}
(This formal expression will be made precise in Section \ref{h.sec.4} below.)
For fixed $t>0$, the measure $\nu_{t}=\mathrm{Law}(g_{t})$ will be called the
\textbf{hypoelliptic heat kernel measure on $G$} (at time $t$).
\end{definition}

\begin{example}
\label{H3-example-b}Let us return to Example \ref{H3-example}, where
$G=\mathbb{H}_{3}$ is the classical 3-dimensional Heisenberg group. If we
write the 2-dimensional Brownian motion $B_{t}$ on $W=\mathbb{R}^{2}$ in terms
of its components as $B_{t}=(B_{t}^{(1)},B_{t}^{(2)})$, then we have
\[
g_{t}=\left(  B_{t},\frac{1}{2}\int_{0}^{t}B_{s}^{(1)}\,dB_{s}^{(2)}%
-B_{s}^{(2)}\,dB_{s}^{(1)}\right)
\]
so that $g_{t}$ is just the 2-dimensional Brownian motion $B_{t}$ coupled with
its stochastic L\'{e}vy area process.
\end{example}

Let $\Lambda$ be an orthonormal basis for $H$, and for $F$ a smooth cylinder
function as in Definition \ref{def-cylinder} let
\[
LF:=\sum_{h\in\Lambda}\widetilde{(h,0)}^{2}F,\quad\Delta_{H}F:=\sum
_{h\in\Lambda}\partial_{\left(  h,0\right)  }^{2}F
\]
where $\partial_{(h,0)}$ and $\widetilde{(h,0)}$ are as defined in Notation
\ref{H-derivative} and \ref{vector-fields}, respectively. A computation
analogous to that yielding (\ref{vector-field-omega}) (see \cite[Proposition
3.29]{A35}) shows that
\[
\left(  LF\right)  (x,c)=\left(  \Delta_{H}F\right)  (x,c)+\sum_{h\in\Lambda
}\left(  \left(  \frac{1}{4}\partial_{\left(  0,\omega\left(  x,h\right)
\right)  }^{2}+\partial_{\left(  0,\omega\left(  x,h\right)  \right)
}\partial_{\left(  h,0\right)  }\right)  F\right)  (x,c).
\]
Furthermore from \cite[Corollary 4.5]{A35}, $\nu_{t}$ weakly solves the heat
equation
\[
\partial_{t}\nu_{t}=\frac{1}{2}L\nu_{t}\text{ with }\lim_{t\downarrow0}\nu
_{t}=\delta_{\mathbf{e}},
\]
that is, for all smooth cylinder functions $F$ as in
Definition \ref{def-cylinder},
\[
\nu_{T}\left(  F\right)  =F\left(  \mathbf{e}\right)  +\frac{1}{2}\int_{0}%
^{T}\mathbb{\nu}_{t}\left(  LF\right)  dt,\quad\text{ for all }T>0.
\]

\subsection{Summary of results}

Here we briefly describe the main results proved in this paper. A key
ingredient in our results is the identity of Theorem
\ref{key-identity-inf}, proved in certain finite-dimensional cases by
M.~Yor (see Section \ref{h.sec.2}): if $B_{t}$ is a Brownian motion on $W$ and
$A:H\rightarrow H$ is Hilbert--Schmidt, then for any $T\geq0$ and any bounded
measurable $f:W\rightarrow\mathbb{R}$, we have
\begin{equation}
\mathbb{E}\left[  f\left(  B_{T}\right)  e^{i\int_{0}^{T}\left\langle
AB_{t},dB_{t}\right\rangle _{H}}\right]  =\mathbb{E}\left[  f\left(
B_{T}\right)  e^{-\frac{1}{2}\int_{0}^{T}\left\Vert AB_{t}\right\Vert _{H}%
^{2}dt}\right]  . \label{key-identity-intro}%
\end{equation}
(The process $AB_{t}$ and the stochastic integral on the left side are defined
precisely in Section \ref{h.sec.3}, see Propositions \ref{ABt-bounded} and
\ref{integral-bounded}.) Using the identity (\ref{key-identity-intro}), in
Section \ref{h.sec.4} we derive a formula for the heat kernel $\nu_{T}$. For
$\lambda\in C$, define the Hilbert--Schmidt operator $\Omega_{\lambda
}:H\rightarrow H$ by $\langle\Omega_{\lambda}h,k\rangle_{H}=\omega
(h,k)\cdot\lambda$ for $h,k\in H$, and let $\rho_{T}$ be the random linear
transformation on $C$ defined by
\[
\rho_{T}\lambda\cdot\lambda^{\prime}=\frac{1}{4}\int_{0}^{T}\langle
\Omega_{\lambda}B_{t},\Omega_{\lambda^{\prime}}B_{t}\rangle_{H}\,dt.
\]
In this notation, our Corollary \ref{gamma-formula} states that
\[
\nu_{T}(dx,dc)=\gamma_{T}(x,c)\mu_{T}(dx)m(dc)
\]
where $m$ is Lebesgue measure on $C$ and
\begin{equation}
\gamma_{T}(x,c):=\mathbb{E}\left.  \left[  \frac{\exp\left(  -\frac{1}{2}%
\rho_{T}^{-1}c\cdot c\right)  }{\sqrt{(2\pi)^{d}\det\rho}}\right\vert
B_{T}=x\right]  . \label{gammaT-formula-intro}%
\end{equation}
In particular, $\nu_{T}$ is absolutely continuous with respect to product
measure $\mu_{T}\otimes m$ on $G=W\times C$. To the best of our knowledge, the
formula (\ref{gammaT-formula-intro}) is new even in the finite-dimensional case.

As a first application of (\ref{gammaT-formula-intro}), we prove in Section
\ref{fernique-sec} a Fernique-type theorem (see Proposition
\ref{fernique-theorem}) on the integrability of $\nu_{T}$: there exists an
$\varepsilon>0$ such that
\[
\int e^{\frac{\varepsilon}{T}(\Vert x\Vert_{W}^{2}+|c|_{C})}\,\nu
_{T}(dx,dc)<\infty.
\]
Necessary ingredients include estimates on the integrability of $\rho_{T}%
^{-1}$, which are the subject of Section \ref{rhoT-sec}.

By further analysis of the formula (\ref{gammaT-formula-intro}), and use of
the estimates of Section \ref{rhoT-sec}, we show in Sections \ref{h.sec.6} and
\ref{h.sub.7.1} that the heat kernel is quasi-invariant under translations by
$G_{CM}$ and is infinitely differentiable in those directions. In particular,
for $\tilde{X}$ the left-invariant vector field on $G$ associated to
$X\in\mathfrak{g}_{CM}$,
\[
\tilde{X}^{\ast}=-\tilde{X}+\psi_{T}^{X}%
\]
where $\psi_{T}^{X}:G\rightarrow\mathbb{R}$, essentially the first logarithmic
derivative of $\nu_{T}$, is a \textquotedblleft Malliavin
smooth\textquotedblright\ function in the sense that $\psi_{T}^{X}$ is in
$L^{\infty-}(\nu_{T})=\cap_{p\in\lbrack1,\infty)}L^{p}(\nu_{T})$ and all of
its derivatives exist and are in also in $L^{\infty-}(\nu_{T})$; see Lemma
\ref{h.the.7.8} and Corollaries \ref{h.the.7.12-2} and \ref{h.the.7.12}.
Similar techniques were used in \cite{Dobbs2013} to study elliptic heat kernel
measures on $G$.

\subsection{Measurable group actions\label{s.mga}}

It was first observed in \cite{A35} that the assumption that $\omega$ be a
continuous operator on $W\times W$ is not strictly necessary. A standard fact
about abstract Wiener spaces is that, if $K$ is any Hilbert space and $T : W
\to K$ is a continuous linear operator, then the restriction $T|_{H} : H \to
K$ is Hilbert--Schmidt \cite[Corollary 1.4.4]{Kuo75}. It follows that, if
$\omega: W \times W \to C$ is continuous, then its restriction $\omega|_{H
\times H} : H \times H \to C$ is Hilbert--Schmidt, in the sense that for any
orthonormal basis $\Lambda$ of $H$, we have $\|\omega\|_{HS}^{2} := \sum_{h,k
\in\Lambda} |\omega(h,k)|_{C}^{2} < \infty$.

Conversely, suppose that we are merely given a skew-symmetric bilinear form
$\omega:H\times H\rightarrow C$ which is Hilbert--Schmidt. We will show in
Section \ref{h.sec.3} that the formula (\ref{gt-integral}) still makes sense,
\emph{even if $\omega$ does not extend continuously to $W\times W$}. Indeed,
the results of this paper (which are all expressed in terms of the process
$g_{t}$) will be proved under this weaker assumption. In this case, there is
no canonical way to extend $\omega$ to $W\times W$, and thus it does not
really make sense to speak of $G$ as a group. However, the Cameron--Martin
group $G_{CM}$ is still perfectly well-defined, and we obtain a measurable
left and right action of $G_{CM}$ on the measurable space $G$. In particular,
fix $h\in H$ and let $T_{h}:H\rightarrow C$ be the linear map given by
$T_{h}k:=\omega(h,k)$ for any $k\in H$. Then, for any orthonormal basis
$\{e_{j}\}_{j=1}^{d}$ of $C$, we have that the linear functional $\ell
_{j}:H\rightarrow\mathbb{R}$ defined by $\ell_{j}(k):=T_{h}k\cdot e_{j}%
=\omega(h,k)\cdot e_{j}$ extends to a $\mu$-measurable linear functional
$\hat{\ell}_{j}$ on $W$; see for example \cite[Theorem 2.10.11]%
{bogachev-gaussian-book}. Thus, we may define
\begin{equation}
\omega(h,w):=\sum_{j=1}^{d}\hat{\ell}_{j}(w)e_{j}, \label{e.ext}%
\end{equation}
and hence
\begin{equation}
(w,c)\mapsto(h,z)(w,c):=\left(  h+w,z+c+\frac{1}{2}\omega(h,w)\right)
\label{e.r}%
\end{equation}
is a measurable transformation on $G$. In fact, under the assumption that
$\omega$ is Hilbert--Schmidt, this does not depend on $C$ being
finite-dimensional. The adjoint $T_{h}^{\ast}:C\rightarrow H$ is also
Hilbert--Schmidt, and we may write $\ell_{j}(k)=\langle k,T_{h}^{\ast}%
e_{j}\rangle_{H}$. Then $\Vert\hat{\ell}_{j}\Vert_{L^{2}(\mu)}=\Vert
T_{h}^{\ast}e_{j}\Vert_{H}$, which implies that
\[
\sum_{j=1}^{\infty}\Vert\hat{\ell}_{j}\Vert_{L^{2}(\mu)}^{2}<\infty,
\]
and so $\sum_{j=1}^{\infty}\hat{\ell}_{j}(w)^{2}<\infty$ $\mu$-a.s and
equation (\ref{e.ext}) makes sense with $d=\infty$. Thus, for example, the
quasi-invariance results of Section \ref{h.sec.6} can be interpreted as
statements about how the measure $\nu_{t}$ on $G$ behaves under the left
action by elements $(h,z)\in G_{CM}$ as in (\ref{e.r}), and the analogously
defined right action. Further discussion can be found in \cite[Section 9]{A35}.

For concreteness, however, we encourage the reader to continue to think of the
case when $\omega$ does have a continuous extension to $W\times W$, in which
$G$ is an honest group.

\section{Quadratic Brownian integrals in finite dimensions}\label{h.sec.2}

This section is devoted to the discussion of the identity
(\ref{key-identity-intro}) in the case where $W$ is finite-dimensional. 

\begin{theorem}
\label{h.the.2.1}Let $\left\{  B_{t}\right\}  _{t\geq0}$ be an $N$-dimensional
Brownian motion, $A$ be an $N\times N$ skew-symmetric matrix, and $T>0$. Then,
for any measurable $f:\mathbb{R}^{N}\rightarrow\mathbb{C}$ such that
$\mathbb{E}\left\vert f\left(  B_{T}\right)  \right\vert <\infty,$
\begin{equation}
\mathbb{E}\left[  f\left(  B_{T}\right)  e^{i\int_{0}^{T}AB_{t}\cdot dB_{t}%
}\right]  =\mathbb{E}\left[  f\left(  B_{T}\right)  e^{-\frac{1}{2}\int%
_{0}^{T}\left\vert AB_{t}\right\vert ^{2}dt}\right]  . \label{h.equ.2.1}%
\end{equation}

\end{theorem}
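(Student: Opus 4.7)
The plan is to represent both sides of \eqref{h.equ.2.1} as $u(0,0)$ and $v(0,0)$ for the solutions of two backward Kolmogorov equations on $\mathbb{R}^N$, and then to exploit the skew-symmetry of $A$ via an averaging argument over the compact abelian subgroup of $O(N)$ generated by $\{e^{sA}\}_{s\in\mathbb{R}}$ to identify these two values at the origin.

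Concretely, for $(t,x)\in[0,T]\times\mathbb{R}^N$ define
\[
u(t,x) := \mathbb{E}_{t,x}\!\left[f(B_T)\,e^{i\int_t^T AB_s\cdot dB_s}\right], \quad v(t,x) := \mathbb{E}_{t,x}\!\left[f(B_T)\,e^{-\frac{1}{2}\int_t^T |AB_s|^2\,ds}\right],
\]
where $\mathbb{E}_{t,x}$ denotes expectation for a Brownian motion with $B_t=x$. Since $|e^{i\int AB\cdot dB}|\equiv 1$ and $|e^{-\frac{1}{2}\int|AB|^2\,ds}|\le 1$, both integrands are dominated by $|f(B_T)|$ and thus integrable by hypothesis. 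Applying It\^o's formula to the processes $u(t,B_t)\exp\!\bigl(i\int_0^t AB_s\cdot dB_s\bigr)$ and $v(t,B_t)\exp\!\bigl(-\tfrac{1}{2}\int_0^t|AB_s|^2\,ds\bigr)$ shows (provided $u,v$ are sufficiently smooth) that the $dt$-parts vanish exactly when $u$ and $v$ are classical solutions of the backward PDEs
\[
\partial_t u + \tfrac{1}{2}\Delta u + i(Ax)\cdot\nabla u - \tfrac{1}{2}|Ax|^2 u = 0, \quad \partial_t v + \tfrac{1}{2}\Delta v - \tfrac{1}{2}|Ax|^2 v = 0,
\]
with $u(T,\cdot)=v(T,\cdot)=f$. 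The boundedness noted above upgrades the resulting local martingales to true martingales, which identify the left- and right-hand sides of \eqref{h.equ.2.1} with $u(0,0)$ and $v(0,0)$ respectively. Thus the theorem reduces to proving $u(0,0)=v(0,0)$.

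Because $A$ is skew-symmetric, $\{e^{sA}:s\in\mathbb{R}\}$ is a one-parameter subgroup of $O(N)$; let $G$ denote its closure, a compact abelian Lie subgroup, with normalized Haar measure $\nu$. For any $g\in G$, the map $x\mapsto gx$ commutes with $\Delta$ (orthogonality) and preserves $|Ax|^2$ (since $g$, being a function of $A$, commutes with $A$). Hence if $u$ solves the first PDE with terminal data $f$, then $(t,x)\mapsto u(t,gx)$ solves it with terminal data $f(g\,\cdot)$; by linearity the averages
\[
\bar u(t,x) := \int_G u(t,gx)\,d\nu(g), \quad \bar v(t,x) := \int_G v(t,gx)\,d\nu(g)
\]
solve their respective PDEs with the common averaged terminal data $\bar f(x) := \int_G f(gx)\,d\nu(g)$. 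The key point is that $\bar u$ is $G$-invariant in $x$: differentiating $\bar u(t,e^{sA}x)=\bar u(t,x)$ in $s$ at $s=0$ yields $(Ax)\cdot\nabla\bar u\equiv 0$, so the first-order term in $\bar u$'s PDE vanishes identically. Thus $\bar u$ is also a solution of the second PDE with terminal data $\bar f$, and by uniqueness for that PDE we conclude $\bar u\equiv\bar v$. Since $g\cdot 0=0$ for every $g\in G$, it follows that $u(t,0)=\bar u(t,0)=\bar v(t,0)=v(t,0)$, and setting $t=0$ proves \eqref{h.equ.2.1}.

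The main technical obstacle is making the It\^o / Feynman-Kac representations fully rigorous for a general measurable $f$ with $\mathbb{E}|f(B_T)|<\infty$: this requires enough regularity of $u,v$ to apply It\^o's formula and a uniqueness theorem for the second PDE. A standard workaround is to first prove \eqref{h.equ.2.1} for $f$ in a dense subclass---say, Schwartz functions or finite linear combinations of characters $e^{i\xi\cdot x}$---for which $u$ and $v$ are smooth via the explicit Mehler-type formulas and the martingales are visibly bounded, and then extend to general $f$ by a density / bounded-convergence argument.
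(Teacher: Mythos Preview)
Your approach is correct and the averaging-over-$G$ idea is elegant. It is, however, quite different from the paper's main proof, which is purely probabilistic: there one first treats the $2\times 2$ case following Yor, by constructing two independent one-dimensional Brownian motions $\beta_t=\int_0^t\frac{B_s}{|B_s|}\cdot dB_s$ and $\gamma_t=\int_0^t\frac{AB_s}{|AB_s|}\cdot dB_s$, observing that $|B|$ is $\sigma(\beta)$-measurable and hence independent of $\gamma$, so that $\int_0^T AB_s\cdot dB_s$ is conditionally Gaussian given $|B|$; the general case then follows by orthogonally quasi-diagonalizing $A$ and using independence plus a multiplicative-system argument. Your argument is instead PDE-based and is close in spirit to the alternative proof the paper gives in its appendix, where one writes both sides as $(e^{T(L+S)}f)(0)$ and $(e^{TL}f)(0)$ with $L=\tfrac12\Delta-\tfrac12|Ax|^2$ and $S=i(Ax)\cdot\nabla$, checks $[S,L]=0$, and uses $Sf(0)=0$ to drop the $e^{TS}$. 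The paper makes that rigorous by working on explicit finite-dimensional $L$- and $S$-invariant subspaces $\Phi\mathcal{P}_m$ (polynomials times the ground state $\Phi(x)=e^{-\frac12\sqrt{-A^2}x\cdot x}$); you instead exploit the same symmetry by Haar-averaging over $G=\overline{\{e^{sA}\}}$ so that the first-order term disappears outright, and then appeal to uniqueness for the simpler PDE. Your route trades the construction of invariant subspaces for a uniqueness theorem, which is a fair exchange; just make sure, when you implement the density step, to specify the function class in which the Feynman--Kac representation and uniqueness hold (e.g.\ bounded classical solutions with Schwartz terminal data), since for the unbounded potential $-\tfrac12|Ax|^2$ this is where the rigor lives.
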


\begin{remark}
  This result was proved in the case of the 3-dimensional Heisenberg
  group $\mathbb{H}_3$ (in which $N=2$, $A = \begin{pmatrix} 0 & 1
    \\ -1 & 0 \end{pmatrix}$, and $\int_0^T AB_t\cdot dB_t$ is the L\'evy area process) by
  M.~Yor \cite{yor-remarques-levy}.  An alternative derivation can be
  found in \cite[Section 2.1]{mansuy-yor}.  The relationship between
  the integrals $\int_0^T AB_t\cdot dB_t$ and $\int_0^T |A B_t|^2\,dt$ was
  studied by P.~L\'evy \cite{levy40, levy50} but to the best of our
  understanding the identity (\ref{h.equ.2.1}) is not contained in his
  work.  A similar computation appears in \cite{kuelbs-li-2005} which
  effectively obtains (\ref{h.equ.2.1}) in the case $f=1$.   Here
we provide a proof based on Yor's result for the stochastic L\'{e}vy area.  
In Appendix \ref{appendix},
we also provide another self-contained proof of Theorem \ref{h.the.2.1},
based on analysis of the infinitesimal generator of $g_t$.

\end{remark}

\noindent
{\it Proof of Theorem \ref{h.the.2.1}.}
  We include here a sketch of Yor's argument for $\mathbb{H}_3$; we
   will then show how the general case follows.  Suppose $N=2$ and $A = 
   \begin{pmatrix}
     0 & a \\ -a & 0
   \end{pmatrix}
   $
   for some $a \in \mathbb{R}$.  By the rotational invariance of the
   Brownian motion $B$ and the fact that $A$ commutes with rotations,
   it is sufficient to establish (\ref{h.equ.2.1}) with $f(B_T)$
   replaced by $g(|B_T|)$; that is, to show that
   \begin{equation}\label{abs-BT-cond}
     \mathbb{E} \left[ \left. e^{i \int_0^T A B_t\cdot dB_t} \right\vert |B_T| \right]
     = \mathbb{E} \left[ \left. e^{-\frac{1}{2} \int_0^T |A B_t|^2\,dt} \right\vert
       |B_T| \right].
  \end{equation}
  Now we observe that
  \begin{equation*}
    \beta_t := \int_0^t \frac{B_s}{|B_s|} \cdot dB_s, \quad \gamma_t :=
    \int_0^t \frac{A B_s}{|A B_s|} \cdot dB_s
  \end{equation*}
  are two independent one-dimensional Brownian motions.  (The
  integrals are well-defined because, almost surely, $B_t \ne 0$ for
  almost every $t$.  They can be seen to be Brownian motions by
  L\'evy's characterization; each is a continuous martingale with
  quadratic variation $t$.  Finally, since $A$ is skew-symmetric, it
  follows from It\^o's isometry that $\beta, \gamma$ are
  uncorrelated and hence independent.)  We note that since $|A B_s| =
  a |B_s|$, we can write 
  \begin{equation}\label{A-gamma}
    Z_t = \int_0^t A B_s \cdot dB_s = a \int_0^t |B_s|\,d\gamma_s.
  \end{equation}
  Now if we let $S_t = |B_t|^2$,
  by It\^o's formula we have
  \begin{equation*}
    S_t = 2 \int_0^t  B_s \cdot dB_s + 2t = 2 \int_0^t \sqrt{S_t} \,d\beta_t + 2 t.
  \end{equation*}
  In particular, $S$ is $\sigma(\beta)$-measurable, and hence $S$ (and
  also $|B| = \sqrt{S}$) is independent of the process $\gamma$.  (For
  details, see \cite{yor-filtrations} and references therein.)  Thus,
  (\ref{A-gamma}) implies that, conditioned on $|B|$, $Z_T$ is
  Gaussian with variance given by $a^2 \int_0^T |B_t|^2\,dt = \int_0^T
  |A B_t|^2\,dt$.  Hence by the Gaussian Fourier transform, we have
  \begin{equation*}
    \mathbb{E} \left[ \left. e^{i Z_T} \right\vert |B|\right] = e^{-\frac{1}{2}
      \int_0^T |A B_t|^2\,dt}.
  \end{equation*}
  Conditioning on $|B_T|$ we have (\ref{abs-BT-cond}).

  For arbitrary $N$, we begin with the case that $A$ is
  quasi-diagonal, that is, block diagonal with its nonzero blocks of
  the form $
  \begin{pmatrix}
    0 & a_i \\ -a_i & 0
  \end{pmatrix}
  $.  If $f$ is of the form $f(x_1, \dots, x_N) = f_1(x_1) \dots
  f_N(x_N)$ with $f_1, \dots, f_n$ bounded and measurable, then (\ref{h.equ.2.1}) follows immediately from the $N=2$
  case by using independence.  Then the case of general bounded
  measurable $f$ follows from the multiplicative system theorem
  \cite[Appendix A, p.~309]{Janson1997}, and for general $f$ we
  can use a truncation argument.

  Finally, an arbitrary skew-symmetric $A$ can be written $A = U
  \tilde{A} U$, where $U$ is orthogonal and $\tilde{A}$ is
  quasi-diagonal.  ($U$ can be taken to have rows given by the real
  and imaginary parts of the eigenvectors of the Hermitian complex
  matrix $iA$.)  We have shown that (\ref{h.equ.2.1}) holds for
  $\tilde{A}$; if we replace $B_t$ by the Brownian motion $U B_t$ and
  $f$ by $f \circ U^*$, we have the desired result for $A$.
\hfill$\square$

\section{Quadratic Brownian integrals in infinite dimensions\label{h.sec.3}}

It is now fairly easy to generalize Theorem \ref{h.the.2.1} to the
infinite-dimensional setting using a finite-dimensional approximation
argument. Before doing this we need to construct the infinite-dimensional
stochastic processes involved. We give a self-contained construction that
suffices for our purposes, but for a more general view of Hilbert space
stochastic calculus, see \cite{Metivier82}. The It\^{o} integral relative to
Brownian motion in an abstract Wiener space, is discussed in Kuo \cite[pages
188-207, especially Theorem 5.1]{Kuo75}, \cite[p. 5]{KS1991}, and the appendix
in \cite{Driver1997b}.

\begin{notation}
Let $HS=HS(H)$ denote the Hilbert space of Hilbert--Schmidt operators on $H$,
with the usual norm $\Vert A\Vert_{HS}^{2}:=\sum_{h\in\Lambda}\Vert
Ah_{i}\Vert_{H}^{2}$, where $\Lambda$ is any orthonormal basis for $H$. Let
$HS_{0}=HS_{0}(H)$ denote the subspace consisting of those operators which
extend continuously to $W$, and whose range is finite-dimensional and
contained in $W_{\ast}$.
\end{notation}

\begin{lemma}
\label{l.HS0-form}A Hilbert--Schmidt operator $A$ is in $HS_{0}$ iff there
exists a finite rank orthogonal projection $P:H\rightarrow H$ such that
$A=PAP,$ $\operatorname*{Ran}\left(  P\right)  \subset W_{\ast},$ and $P$
extends continuously to $W.$
\end{lemma}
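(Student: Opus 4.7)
The plan is to prove both directions of the equivalence. The $(\Leftarrow)$ direction is essentially immediate, so the real content is in $(\Rightarrow)$.

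For the easy direction, suppose $A = PAP$ with $P$ as described. Then $A$ factors as $W \xrightarrow{P} \operatorname{Ran}(P) \xrightarrow{A|_H} H \xrightarrow{P} \operatorname{Ran}(P)$, and the first $P$ is continuous $W \to H$ by hypothesis, while the remaining maps are continuous since $\operatorname{Ran}(P)$ is finite-dimensional. Thus $A$ extends continuously to $W$ and its range lies in $\operatorname{Ran}(P) \subset W_*$, so $A \in HS_0$.

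For $(\Rightarrow)$, suppose $A \in HS_0$, so $A$ has finite rank with $V_1 := \operatorname{Ran}(A) \subset W_*$ and $A$ extends continuously $W \to H$. Pick a basis $v_1, \dots, v_n$ of $V_1$ and write $Ax = \sum_i \langle x, e_i\rangle_H v_i$ for unique $e_i \in H$ (uniqueness comes from linear independence of the $v_i$). The continuous extendability of $A$ to $W$, combined with linear independence of the $v_i$ in the finite-dimensional space $V_1$, forces each linear functional $x \mapsto \langle x, e_i\rangle_H$ to extend continuously to $W$; by the definition of $W_*$ in Notation \ref{Wstar}, this means $e_i \in W_*$. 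Now let $F := \operatorname{span}\{v_1, \dots, v_n, e_1, \dots, e_n\} \subset W_*$ and let $P$ be the orthogonal projection of $H$ onto $F$.

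It remains to verify the three required properties of $P$. By construction $\operatorname{Ran}(P) = F \subset W_*$ and $F$ is finite-dimensional. To see $P$ extends continuously to $W$, apply Gram--Schmidt (within the subspace $W_*$ of $H$) to obtain an orthonormal basis $f_1, \dots, f_m$ of $F$ with each $f_j \in W_*$, and write $Px = \sum_j \langle x, f_j\rangle_H f_j$; each functional $\langle \cdot, f_j\rangle_H$ extends continuously from $H$ to $W$ since $f_j \in W_*$, so $P$ does too. Finally, $V_1 \subset F$ gives $PA = A$, and the identity $A^*y = \sum_i \langle y, v_i\rangle_H e_i$ shows $\operatorname{Ran}(A^*) \subset \operatorname{span}\{e_i\} \subset F$, whence $\operatorname{Ker}(A) = \operatorname{Ran}(A^*)^\perp \supset F^\perp = \operatorname{Ker}(P)$ and therefore $AP = A$. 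Combining, $PAP = A$.

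The main obstacle is the key observation in the middle paragraph: extracting from the continuous extension of $A$ and the containment $\operatorname{Ran}(A) \subset W_*$ the fact that the ``dual'' vectors $e_i$ appearing in the representation $Ax = \sum_i \langle x, e_i \rangle_H v_i$ must themselves lie in $W_*$. Everything else is a direct construction; once the right finite-dimensional subspace of $W_*$ is identified, defining $P$ as the orthogonal projection onto it automatically produces the required factorization.
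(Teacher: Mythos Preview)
Your proof is correct and follows essentially the same approach as the paper: both write $Ah = \sum_i \langle h, e_i\rangle v_i$ with $\{v_i\}$ a basis of $\operatorname{Ran}(A)\subset W_*$, use the continuous extendability of $A$ to $W$ to conclude $e_i\in W_*$, and take $P$ to be orthogonal projection onto the span of all these vectors (which is exactly $\operatorname{Ran}(A)+\operatorname{Ran}(A^*)$). Your version is somewhat more explicit in verifying that $P$ extends continuously to $W$ and that $AP=A$, while the paper uses an orthonormal basis $\{u_i\}$ of $\operatorname{Ran}(A)$ so that the dual vectors are simply $A^*u_i$, but the constructions are the same.
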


\begin{proof}
If $A=PAP$ with $P$ being a projection as in the statement of the lemma, then
it is clear that $A\in HS_{0}.$ Conversely, if $A\in HS_{0},$ let $\left\{
u_{i}\right\}  _{i=1}^{n}\subset W_{\ast}$ be an orthonormal basis for
$\operatorname*{Ran}\left(  A\right)  .$ Then for all $h\in H$
\[
Ah=\sum_{i=1}^{n}\left\langle Ah,u_{i}\right\rangle u_{i}=\sum_{i=1}%
^{n}\left\langle h,A^{\ast}u_{i}\right\rangle u_{i}=\sum_{i=1}^{n}\left\langle
h,v_{i}\right\rangle u_{i}
\]
where $v_{i}:=A^{\ast}u_{i}\in W_{\ast}$ for all $i.$ We now define $P$ to be
orthogonal projection onto
\[
\operatorname*{span}\left\{  u_{i},v_{i}:1\leq i\leq n\right\}
=\operatorname*{Ran}\left(  A\right)  +\operatorname*{Ran}\left(  A^{\ast
}\right)  .
\]
It is now a simple matter to check that $AP=A=PA$ on $H$ and $A=PAP$ on $W.$
\end{proof}

\begin{lemma}
\label{HS-approx}If $A\in HS$ and $\left\{  S_{n}\right\}  _{n=1}^{\infty}$
and $\left\{  T_{n}\right\}  _{n=1}^{\infty}$ are bounded operators on $H$
such that $S_{n}\overset{s}{\rightarrow}I$ and $T_{n}\overset{s}{\rightarrow
}I$ (strong convergence), then $\left\Vert S_{n}AT_{n}^{\ast}-A\right\Vert
_{HS}\rightarrow0$ as $n\rightarrow\infty.$ [Note that $T_{n}%
\overset{s}{\rightarrow}I$ does not necessarily imply $T_{n}^{\ast
}\overset{s}{\rightarrow}I.]$
\end{lemma}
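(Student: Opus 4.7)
The plan is to reduce to the case where $A$ has finite rank via a standard three-epsilon argument, using that finite-rank operators are dense in $HS$ and that, on a finite-rank operator, one only needs the strong convergences $S_n\to I$ and $T_n\to I$ applied to a finite set of vectors.

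First I would invoke the uniform boundedness principle: since $S_n f\to f$ and $T_n f\to f$ for every $f\in H$, the sequences $\{\|S_n\|\}$ and $\{\|T_n\|\}$ are bounded, and I set $M:=\max(\sup_n\|S_n\|,\sup_n\|T_n\|)<\infty$. Given $\varepsilon>0$, choose a finite-rank operator $A_\varepsilon$ with $\|A-A_\varepsilon\|_{HS}<\varepsilon$; one can, for instance, truncate a singular value decomposition $A=\sum_j \sigma_j\langle\cdot,e_j\rangle_H f_j$ to the top $r$ terms. Writing
\begin{equation*}
S_n A T_n^{\ast} - A = S_n(A-A_\varepsilon)T_n^{\ast} + (S_n A_\varepsilon T_n^{\ast} - A_\varepsilon) + (A_\varepsilon - A)
\end{equation*}
and using the standard inequality $\|BCD\|_{HS}\le \|B\|\,\|C\|_{HS}\,\|D\|$ on the first term gives
\begin{equation*}
\|S_n A T_n^{\ast}-A\|_{HS}\le (M^2+1)\varepsilon + \|S_n A_\varepsilon T_n^{\ast}-A_\varepsilon\|_{HS},
\end{equation*}
so it suffices to prove that the middle term vanishes as $n\to\infty$ for each fixed finite-rank $A_\varepsilon$.

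For that step, write $A_\varepsilon f=\sum_{j=1}^{r}\alpha_j\langle f,u_j\rangle_H v_j$. Using $\langle T_n^{\ast}f,u_j\rangle_H=\langle f,T_n u_j\rangle_H$ and adding and subtracting $\langle \cdot,u_j\rangle_H S_n v_j$ inside each summand yields
\begin{equation*}
S_n A_\varepsilon T_n^{\ast}-A_\varepsilon = \sum_{j=1}^{r}\alpha_j\bigl[\langle \cdot, T_n u_j - u_j\rangle_H\, S_n v_j + \langle \cdot, u_j\rangle_H\,(S_n v_j - v_j)\bigr].
\end{equation*}
Each summand is a rank-one operator, and a rank-one operator $f\mapsto\langle f,a\rangle_H b$ has Hilbert--Schmidt norm $\|a\|_H\|b\|_H$. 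Thus the $j$th summand has HS norm bounded by $|\alpha_j|\bigl(\|T_n u_j-u_j\|_H\|S_n v_j\|_H + \|u_j\|_H\|S_n v_j-v_j\|_H\bigr)$, which tends to $0$ as $n\to\infty$ because $T_n u_j\to u_j$ and $S_n v_j\to v_j$ (only for the finitely many vectors $u_1,\dots,u_r$ and $v_1,\dots,v_r$), combined with the uniform bound $\|S_n v_j\|_H\le M\|v_j\|_H$. Summing the finitely many terms and letting $\varepsilon\downarrow 0$ completes the proof.

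There is no serious obstacle; the substance of the lemma is the finite-rank reduction itself. The delicate point, and the reason for the bracketed remark in the statement, is that the argument above never requires strong convergence of $T_n^{\ast}$ (which would not follow from $T_n\overset{s}{\to}I$ in general). The asymmetric hypothesis suffices precisely because, after reducing to finite rank, $T_n^{\ast}$ only enters through the pairings $\langle T_n^{\ast}\cdot,u_j\rangle_H=\langle\cdot,T_n u_j\rangle_H$, converting the appearance of $T_n^{\ast}$ back into strong convergence of $T_n$ on the finitely many $u_j$.
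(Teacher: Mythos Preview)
Your proof is correct. The paper takes a different and somewhat slicker route: writing $s_n:=S_n-I$, $t_n:=T_n-I$, one has
\[
S_n A T_n^\ast - A = s_n A + S_n A t_n^\ast,
\]
so $\|S_n A T_n^\ast - A\|_{HS}\le \|s_n A\|_{HS}+C\|A t_n^\ast\|_{HS}$ with $C=\sup_n\|S_n\|$. The paper then uses the adjoint identity $\|A t_n^\ast\|_{HS}=\|t_n A^\ast\|_{HS}$ to convert the awkward $t_n^\ast$ back to $t_n$, and finishes by dominated convergence on $\|s_n A\|_{HS}^2=\sum_h\|s_n A h\|_H^2$ and the analogous sum for $t_n A^\ast$. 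Your three-epsilon reduction to finite rank achieves the same conversion through the pairing $\langle T_n^\ast\cdot,u_j\rangle=\langle\cdot,T_n u_j\rangle$ on finitely many vectors; this is a bit longer but arguably more transparent about where exactly the hypothesis on $T_n$ (rather than $T_n^\ast$) is spent. The paper's argument buys brevity and avoids any approximation of $A$; yours makes the role of the asymmetric hypothesis fully explicit.
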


We would like to thank Mart\'in Argerami \cite{mse-hilbert-schmidt} for
suggesting the following proof.

\begin{proof}
Let $s_{n}:=S_{n}-I$ and $t_{n}:=T_{n}-I$ so that $S_{n}=I+s_{n}$ and
$T_{n}=I+t_{n}$ with $s_{n},t_{n}\overset{s}{\rightarrow}0$ as $n\rightarrow
\infty.$ By the uniform boundedness principle we know $C:=\sup_{n}\left(
\left\Vert S_{n}\right\Vert _{op}\vee\left\Vert T_{n}\right\Vert _{op}\right)
<\infty.$ Then%
\begin{align*}
\left\Vert S_{n}AT_{n}^{\ast}-A\right\Vert _{HS }  &  =\left\Vert
S_{n}A\left(  I+t_{n}^{\ast}\right)  -A\right\Vert _{HS }\\
&  =\left\Vert s_{n}A+S_{n}At_{n}^{\ast}\right\Vert _{HS }\\
&  \leq\left\Vert s_{n}A\right\Vert _{HS }+C\left\Vert At_{n}^{\ast
}\right\Vert _{HS }=\left\Vert s_{n}A\right\Vert _{HS}C\left\Vert t_{n}%
A^{\ast}\right\Vert _{HS }.
\end{align*}
By the dominated convergence theorem we have, for any orthonormal basis
$\Lambda$,
\[
\lim_{n\rightarrow\infty}\left\Vert s_{n}A\right\Vert _{HS}^{2} =\lim
_{n\rightarrow\infty}\sum_{h\in\Lambda}\left\Vert s_{n}Ah\right\Vert _{H}%
^{2}=\sum_{h\in\Lambda}\lim_{n\rightarrow\infty}\left\Vert s_{n}Ah\right\Vert
_{H}^{2}=\sum_{h\in\Lambda}0=0
\]
and similarly $\lim_{n\rightarrow\infty}\left\Vert t_{n}A^{\ast}\right\Vert
_{HS }^{2}=0.$
\end{proof}

\begin{corollary}
\label{HS0-dense} $HS_{0}$ is dense in $HS$.
\end{corollary}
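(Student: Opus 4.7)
The plan is to approximate a general $A\in HS$ by operators of the form $P_n A P_n$, where $P_n$ is a carefully chosen sequence of finite-rank orthogonal projections satisfying the hypotheses of Lemma \ref{l.HS0-form}. Once constructed, the convergence $P_n A P_n \to A$ in Hilbert--Schmidt norm will be immediate from Lemma \ref{HS-approx} (taking $S_n = T_n = P_n$, noting that $P_n^\ast = P_n$), and each $P_n A P_n$ will lie in $HS_0$ by Lemma \ref{l.HS0-form}.

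The first step is to produce the projections $P_n$. Since $W^\ast$ embeds continuously and densely into $H$, its image $W_\ast$ is a dense linear subspace of $H$. Therefore one can pick a countable sequence in $W_\ast$ whose linear span is dense in $H$, and apply Gram--Schmidt (which stays within the subspace $W_\ast$) to obtain an orthonormal basis $\{u_i\}_{i=1}^\infty$ of $H$ with $u_i \in W_\ast$ for all $i$. Define $P_n$ to be orthogonal projection onto $\operatorname{span}\{u_1,\dots,u_n\}$, so that $P_n h = \sum_{i=1}^n \langle h, u_i\rangle_H u_i$.

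Next I would verify that each $P_n$ qualifies as the projection in Lemma \ref{l.HS0-form}: it is finite rank, its range $\operatorname{span}\{u_1,\dots,u_n\}$ lies in $W_\ast$, and it extends continuously to $W$ because for each $i$ the functional $\langle \cdot, u_i\rangle_H$ has a continuous extension $\hat{u}_i \in W^\ast$ (this is the very definition of $W_\ast$), so $w \mapsto \sum_{i=1}^n \hat{u}_i(w)\, u_i$ is a continuous map $W \to W$ agreeing with $P_n$ on $H$. Consequently $P_n A P_n \in HS_0$ for any $A \in HS$ (again by Lemma \ref{l.HS0-form}, applied with projection $P_n$ that absorbs $P_n A P_n$).

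Finally, since $\{u_i\}$ is an orthonormal basis of $H$, Parseval's identity gives $P_n \to I$ strongly on $H$. Applying Lemma \ref{HS-approx} with $S_n = T_n = P_n$ (so that $T_n^\ast = P_n$) yields
\[
\|P_n A P_n - A\|_{HS} \longrightarrow 0 \quad \text{as } n \to \infty,
\]
which proves the density. There is no real obstacle here; the only subtlety is the construction of an orthonormal basis of $H$ sitting inside $W_\ast$, which uses the standing non-degeneracy assumption that $H$ (and hence $W_\ast$) is dense in $W$ and hence in itself.
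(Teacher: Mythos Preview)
Your proof is correct and follows essentially the same approach as the paper: choose an orthonormal basis of $H$ lying in $W_\ast$, let $P_n$ be the projection onto the first $n$ basis vectors, verify $P_n A P_n\in HS_0$, and invoke Lemma~\ref{HS-approx} with $S_n=T_n=P_n$. The only quibble is the final parenthetical remark, whose wording is a bit garbled; what you actually need (and correctly stated earlier) is that $W_\ast$ is dense in $H$, which follows because the adjoint of the dense embedding $H\hookrightarrow W$ is the dense embedding $W^\ast\hookrightarrow H$.
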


\begin{proof}
Since $W_{\ast}$ is dense in $H$, we can choose an orthonormal basis
$\{h_{i}\}_{i=1}^{\infty}$ for $H$ with $\Lambda\subset W_{\ast}$. Set $P_{n}
h = \sum_{i=1}^{n} \langle h, h_{i} \rangle_{H} h_{i}$ to be the orthogonal
projection onto the span of $\{h_{1}, \dots, h_{n}\}$. Note that $P_{n}$ is
self-adjoint and $P_{n} \to I$ strongly. Then for any $A \in HS$, it is simple
to verify that $P_{n} A P_{n} \in HS_{0}$, and taking $S_{n} = T_{n} =
T_{n}^{*} = P_{n}$ in Lemma \ref{HS-approx}, we have $P_{n} A P_{n} \to A$ in
$HS$-norm.
\end{proof}

\begin{notation}
Let $(\Omega, \mathcal{F}, \{\mathcal{F}_{t}\}_{t \ge0}, \mathbb{P})$ be a
filtered probability space on which there is defined a $W$-valued standard
Brownian motion $\{B_{t}\}_{t \ge0}$ as in Definition \ref{W-Brownian-def}.
Fix $T > 0$ and let $\mathcal{M}_{T}(H)$ denote the vector space of
continuous, square-integrable, $H$-valued martingales up to time $T$ defined
on $\Omega$. We equip $\mathcal{M}_{T}(H)$ with the Banach norm
\[
\|M\|_{\mathcal{M}_{T}(H)}^{2} := \mathbb{E} \left[  \sup_{t \in[0,T]}
\|M_{t}\|_{H}^{2}\right]  .
\]
We likewise define $\mathcal{M}_{T} = \mathcal{M}_{T}(\mathbb{R})$ as the
space of scalar-valued continuous square-integrable martingales with the
analogous Banach norm.
\end{notation}

\begin{proposition}
\label{ABt-bounded} The linear map
\[
HS_{0} \ni A \mapsto\{A B_{t}\}_{0 \le t \le T} \in\mathcal{M}_{T}(H)
\]
is bounded. Hence it extends continuously to a map $HS \to\mathcal{M}_{T}(H)$,
which we will still denote $A B_{t}$.
\end{proposition}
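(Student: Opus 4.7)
The plan is to exhibit an explicit $L^{2}$-bound for $A \in HS_{0}$ and then invoke density via Corollary \ref{HS0-dense}. Fix $A \in HS_{0}$. By Lemma \ref{l.HS0-form}, $A = PAP$ for a finite-rank orthogonal projection $P$ with $\operatorname{Ran}(P) \subset W_{\ast}$, and $P$ has a continuous extension to $W$. Let $\{u_{i}\}_{i=1}^{n}$ be an orthonormal basis for $\operatorname{Ran}(A) \subset W_{\ast}$. The natural definition is
\[
A B_{t} := \sum_{i=1}^{n} \widehat{A^{\ast} u_{i}}(B_{t})\, u_{i},
\]
where for $v \in W_{\ast}$ we write $\hat v \in W^{\ast}$ for the associated continuous linear functional; this makes sense because $A^{\ast} u_{i} \in W_{\ast}$ for each $i$. (On $H$ this coincides with ordinary application of $A$; for $B_{t} \in W$ the right-hand side is how one reads off the coordinates.)

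Next I would verify that $\{A B_{t}\}_{0 \le t \le T}$ is a continuous, square-integrable, $H$-valued martingale. Each scalar process $\widehat{A^{\ast} u_{i}}(B_{t})$ is a continuous real martingale with quadratic variation $t \|A^{\ast} u_{i}\|_{H}^{2}$, by the defining properties of the $W$-valued Brownian motion. Hence the finite sum $A B_{t}$ is a continuous $H$-valued martingale. Moreover,
\[
\mathbb{E} \|A B_{T}\|_{H}^{2} = \sum_{i=1}^{n} \mathbb{E}\bigl[\widehat{A^{\ast} u_{i}}(B_{T})^{2}\bigr] = T \sum_{i=1}^{n} \|A^{\ast} u_{i}\|_{H}^{2} = T \|A^{\ast}\|_{HS}^{2} = T \|A\|_{HS}^{2}.
\]

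Now I would apply Doob's $L^{2}$-inequality to the nonnegative submartingale $\|A B_{t}\|_{H}$ to conclude
\[
\|A B\|_{\mathcal{M}_{T}(H)}^{2} = \mathbb{E}\sup_{t \in [0,T]} \|A B_{t}\|_{H}^{2} \le 4\, \mathbb{E}\|A B_{T}\|_{H}^{2} = 4T \|A\|_{HS}^{2},
\]
which gives the desired boundedness on $HS_{0}$. Since $HS_{0}$ is dense in $HS$ by Corollary \ref{HS0-dense} and $\mathcal{M}_{T}(H)$ is a Banach space, the map extends uniquely and continuously to all of $HS$.

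The only mildly subtle point will be the coordinate-free interpretation of $A B_{t}$ for $A \in HS_{0}$, since $B_{t}$ lives in $W$ rather than $H$; the finite-rank structure coming from Lemma \ref{l.HS0-form} is precisely what is needed to circumvent this and to justify using $A^{\ast} u_{i} \in W_{\ast}$ as genuine linear functionals on $W$. Once this interpretation is in place, the martingale property, the $L^{2}$ identity, and Doob's inequality are all standard, and the density extension is automatic.
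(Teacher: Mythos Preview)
Your proof is correct and follows essentially the same approach as the paper: both reduce $AB_{t}$ for $A\in HS_{0}$ to a finite-dimensional martingale (the paper via $b_{t}:=PB_{t}$, you via the explicit coordinates $\widehat{A^{\ast}u_{i}}(B_{t})$), compute $\mathbb{E}\|AB_{T}\|_{H}^{2}=T\|A\|_{HS}^{2}$, apply Doob's maximal inequality to the submartingale $\|AB_{t}\|_{H}$, and extend by density. The one step worth noting explicitly is that $\sum_{i=1}^{n}\|A^{\ast}u_{i}\|_{H}^{2}=\|A^{\ast}\|_{HS}^{2}$ because $A^{\ast}$ vanishes on $\operatorname{Ran}(A)^{\perp}$, which you use implicitly.
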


\begin{proof}
For $A\in HS_{0}$, there exists by Lemma \ref{l.HS0-form} a finite rank
projection $P$ with $\operatorname*{Ran}\left(  P\right)  \subset W_{\ast}$
such that $A=PAP.$ Then $b_{t}:=PB_{t}$ is a standard $\operatorname*{Ran}%
\left(  P\right)  $-valued Brownian motion and therefore $M_{t}:=AB_{t}%
=PAPB_{t}=PAb_{t}$ is a $\operatorname*{Ran}\left(  P\right)  $-valued
continuous martingale. Now $\{\Vert M_{t}\Vert_{H}\}_{t\geq0}$ is a continuous
submartingale and so Doob's maximal inequality gives
\[
\Vert M\Vert_{\mathcal{M}_{T}(H)}^{2}\leq4\mathbb{E}\Vert M_{T}\Vert_{H}^{2}.
\]
A simple computation shows that $\mathbb{E}\Vert M_{T}\Vert_{H}^{2}%
=\mathbb{E}\Vert Ab_{T}\Vert_{H}^{2}=T\Vert A\Vert_{HS}^{2}$, which completes
the proof.
\end{proof}

\begin{remark}
One can replace the space $\mathcal{M}_{T}(H)$ of square-integrable
martingales with spaces $\mathcal{M}_{T}^{p}(H)$ of $L^{p}$ martingales, $1
\le p < \infty$, with an analogous norm. The corresponding statements still
hold, showing for instance that $A B_{t}$ is an $L^{p}$ martingale, upon
replacing Doob's maximal inequality with the Burkholder--Davis--Gundy inequality.
\end{remark}

For $A=PAP\in HS_{0}$ as in Lemma \ref{l.HS0-form}, we may interpret,%
\[
\int_{0}^{t}\langle AB_{s},dB_{s}\rangle_{H}=\int_{0}^{t}\langle
PAPB_{s},dB_{s}\rangle_{H}=\int_{0}^{t}\langle APB_{s},dPB_{s}\rangle_{H}%
=\int_{0}^{t}\langle Ab_{s},db_{s}\rangle_{H}%
\]
where $b_{t}:=PB_{t}$ is a standard $\operatorname*{Ran}\left(  P\right)
$-valued Brownian motion. Hence we are dealing solely with finite-dimensional
stochastic calculus.

\begin{proposition}
\label{integral-bounded} The linear map
\[
HS_{0} \ni A \mapsto\int_{0}^{\cdot}\langle A B_{s}, dB_{s} \rangle_{H}
\in\mathcal{M}_{T}%
\]
is bounded. Hence it extends continuously to a map $HS \to\mathcal{M}_{T}$
which we shall still denote by $\int_{0}^{\cdot}\langle A B_{s}, dB_{s}
\rangle_{H}$.
\end{proposition}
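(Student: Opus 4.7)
The plan is to mimic the proof of Proposition \ref{ABt-bounded}: first establish the bound on $HS_0$ by reducing to a genuine finite-dimensional It\^o integral, and then invoke density (Corollary \ref{HS0-dense}) to extend continuously to $HS$.

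First I would fix $A \in HS_0$ and use Lemma \ref{l.HS0-form} to write $A = PAP$ with $P$ a finite-rank orthogonal projection whose range lies in $W_*$ and which extends continuously to $W$. Setting $b_t := PB_t$, the discussion preceding the proposition identifies the integral as $M_t := \int_0^t \langle A b_s, db_s\rangle_H$, which is a standard stochastic integral against a finite-dimensional Brownian motion on $\operatorname{Ran}(P)$. In particular $M$ is a continuous $L^2$ martingale vanishing at $0$, with quadratic variation $\langle M\rangle_t = \int_0^t \|A b_s\|_H^2\,ds$.

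The key computation is then to estimate $\mathbb{E} M_T^2$. By It\^o's isometry and Fubini,
\[
\mathbb{E} M_T^2 = \mathbb{E}\int_0^T \|A b_s\|_H^2\,ds = \int_0^T \mathbb{E}\|A b_s\|_H^2\,ds.
\]
Since $b_s$ is a standard Brownian motion on $\operatorname{Ran}(P)$ and $A = PAP$ is zero off $\operatorname{Ran}(P)$, a coordinate computation gives $\mathbb{E}\|A b_s\|_H^2 = s\,\|A\|_{HS}^2$, hence $\mathbb{E} M_T^2 = \tfrac{T^2}{2}\|A\|_{HS}^2$. Applying Doob's maximal inequality to the continuous submartingale $\{|M_t|\}$ yields
\[
\|M\|_{\mathcal{M}_T}^2 = \mathbb{E}\sup_{t\in [0,T]} M_t^2 \leq 4\,\mathbb{E} M_T^2 \leq 2T^2\,\|A\|_{HS}^2,
\]
which is exactly the required boundedness of the linear map on $HS_0$.

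Finally, since $HS_0$ is dense in $HS$ by Corollary \ref{HS0-dense} and $\mathcal{M}_T$ is a Banach space, the bounded linear map extends uniquely and continuously to all of $HS$, and the extension is the process we denote by $\int_0^\cdot \langle A B_s, dB_s\rangle_H$. I do not expect any real obstacle: the reduction $A = PAP$ puts us entirely in finite-dimensional stochastic calculus, and the only mild point to verify is that the coordinate computation $\mathbb{E}\|A b_s\|_H^2 = s\,\|A\|_{HS}^2$ uses precisely the fact that $A$ vanishes off $\operatorname{Ran}(P)$, so the Hilbert--Schmidt norm on $\operatorname{Ran}(P)$ coincides with the full $HS$-norm of $A$.
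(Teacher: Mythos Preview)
Your proposal is correct and follows essentially the same route as the paper: reduce to the finite-dimensional integral via $A=PAP$, apply Doob's maximal inequality and It\^o's isometry to bound $\|M\|_{\mathcal{M}_T}^2$ by a constant times $\|A\|_{HS}^2$, and extend by density. Your constant $2T^2$ is in fact the correct one; the paper's stated bound $2T\|A\|_{HS}^2$ appears to be a typo, since $4\int_0^T t\,dt = 2T^2$.
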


In particular, for fixed $T$, we have a continuous linear map $HS \ni A
\mapsto\int_{0}^{T} \langle A B_{s}, dB_{s} \rangle\in L^{2}(\mathbb{P})$.

\begin{proof}
Let $A\in HS_{0}$ and set $M_{t}=\int_{0}^{t}\langle AB_{s},dB_{s}\rangle
_{H}=\int_{0}^{t}\langle Ab_{s},db_{s}\rangle_{H}$ as above, where
$b_{t}=PB_{t}$ is a standard $\operatorname*{Ran}\left(  P\right)  $-valued
Brownian motion. By Doob's maximal inequality and It\^{o}'s isometry, we have
\[
\Vert M\Vert_{\mathcal{M}_{T}}^{2}\leq4\mathbb{E}|M_{T}|^{2}=4\int_{0}%
^{T}\mathbb{E}\Vert Ab_{t}\Vert_{H}^{2}\,dt=2T\Vert A\Vert_{HS}^{2}%
\]
which completes the proof.
\end{proof}

Now a simple limiting argument shows that Theorem \ref{h.the.2.1} still holds
in this infinite-dimensional setting.

\begin{theorem}
\label{key-identity-inf} Let $A \in HS$ be skew-adjoint (i.e. $A^{*} = -A$)
and $T>0$. Then, for any bounded measurable $f : W \to\mathbb{R}$,
\begin{equation}
\label{key-identity-eqn}\mathbb{E}\left[  f\left(  B_{T}\right)  e^{i\int%
_{0}^{T}\left\langle AB_{t},dB_{t}\right\rangle _{H}}\right]  =\mathbb{E}%
\left[  f\left(  B_{T}\right)  e^{-\frac{1}{2}\int_{0}^{T}\left\Vert
AB_{t}\right\Vert _{H}^{2}dt}\right]  .
\end{equation}

\end{theorem}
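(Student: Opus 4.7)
\noindent\textbf{Proof plan for Theorem \ref{key-identity-inf}.}
The plan is to reduce to the finite-dimensional Theorem \ref{h.the.2.1} by a density argument, using the continuity results of Propositions \ref{ABt-bounded} and \ref{integral-bounded}. First, I would construct a skew-adjoint approximating sequence in $HS_0$. Mimicking the proof of Corollary \ref{HS0-dense}, pick an orthonormal basis $\{h_i\}_{i=1}^\infty \subset W_\ast$ of $H$, let $P_n$ be the orthogonal projection onto $\mathrm{span}\{h_1,\dots,h_n\}$, and set $A_n := P_n A P_n$. Since $P_n$ is self-adjoint, $A_n^\ast = P_n A^\ast P_n = -A_n$, so $A_n$ is skew-adjoint; it lies in $HS_0$ because $\mathrm{Ran}(A_n) \subset \mathrm{span}\{h_1,\dots,h_n\} \subset W_\ast$ and $P_n$ extends continuously to $W$. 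By Lemma \ref{HS-approx} applied with $S_n = T_n = P_n$, we have $A_n \to A$ in $HS$.

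Second, I would establish the identity (\ref{key-identity-eqn}) for each $A_n$. Set $b_t := P_n B_t$, a standard Brownian motion on the finite-dimensional space $\mathrm{Ran}(P_n)$. As discussed before Proposition \ref{integral-bounded},
\[
\int_0^T \langle A_n B_t, dB_t\rangle_H = \int_0^T \langle A_n b_t, db_t\rangle, \qquad \|A_n B_t\|_H = \|A_n b_t\|,
\]
so both exponentials are $\sigma(b_\cdot)$-measurable. Because $(I-P_n)B_t$ is a $W$-valued Brownian motion independent of $b_\cdot$, the conditional expectation
\[
\widetilde f_n(x) := \mathbb{E}\bigl[f(B_T) \mid b_T = x\bigr], \qquad x \in \mathrm{Ran}(P_n),
\]
is a bounded measurable function such that $\mathbb{E}[f(B_T)\mid b_\cdot] = \widetilde f_n(b_T)$. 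Applying Theorem \ref{h.the.2.1} to the finite-dimensional Brownian motion $b_t$, the skew-symmetric operator $A_n|_{\mathrm{Ran}(P_n)}$, and the bounded measurable function $\widetilde f_n$, then tower-conditioning on both sides, yields (\ref{key-identity-eqn}) with $A$ replaced by $A_n$.

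Third, I would pass to the limit $n\to\infty$. By Proposition \ref{integral-bounded}, $\int_0^T \langle A_n B_s, dB_s\rangle_H \to \int_0^T \langle A B_s, dB_s\rangle_H$ in $L^2(\mathbb{P})$; extract a subsequence converging almost surely, and apply dominated convergence with the bound $|f|_\infty$ to get convergence of the left-hand side. By Proposition \ref{ABt-bounded}, $A_n B_\cdot \to A B_\cdot$ in $L^2(\Omega; L^2([0,T],H))$, hence $\int_0^T \|A_n B_t\|_H^2\,dt \to \int_0^T \|A B_t\|_H^2\,dt$ in $L^1(\mathbb{P})$; again pass to a further subsequence for a.s.\ convergence and apply dominated convergence (the exponentials are bounded by $1$) to obtain convergence of the right-hand side. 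This yields the identity for $A$.

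The only step with genuine subtlety is the conditional expectation reduction in the second paragraph: one must observe that the integrands are $\sigma(b_\cdot)$-measurable so that the finite-dimensional theorem can absorb the dependence of $f$ on the $W$-valued endpoint $B_T$ through $\widetilde f_n$. The approximation itself is routine given Lemma \ref{HS-approx} and the $L^2$ continuity statements already proved.
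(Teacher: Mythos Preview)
Your proof is correct and follows the same overall architecture as the paper's: approximate $A$ by the skew-adjoint operators $A_n = P_n A P_n \in HS_0$, invoke the finite-dimensional Theorem \ref{h.the.2.1}, and pass to the limit using Propositions \ref{ABt-bounded} and \ref{integral-bounded}.

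The one genuine difference is in how you deal with the function $f$. The paper first restricts to cylinder functions $f(x) = \psi(\langle h_1,x\rangle,\dots,\langle h_N,x\rangle)$ built from the chosen basis, so that $f(P_n B_T) = f(B_T)$ for $n \ge N$ and Theorem \ref{h.the.2.1} applies directly with $f$ itself; afterwards it extends to arbitrary bounded measurable $f$ by the multiplicative system theorem. You instead keep $f$ general from the start and use the independence of $b_\cdot = P_n B_\cdot$ and $(I-P_n)B_\cdot$ to replace $f(B_T)$ by $\widetilde f_n(b_T) = \mathbb{E}[f(B_T)\mid b_T]$ before applying the finite-dimensional identity. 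Your route is arguably cleaner in that it avoids the multiplicative system machinery entirely; the paper's route avoids having to justify the independence decomposition of the abstract-Wiener Brownian motion. Both are short and essentially equivalent in difficulty. (Minor remark: the subsequence extraction in your third step is unnecessary, since convergence in probability of the bounded integrands already gives convergence of expectations; but it does no harm.)
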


\begin{proof}
Suppose first that $f$ is a bounded cylinder function, that is, $f(x) =
\psi(\langle h_{1}, x \rangle_{H}, \dots, \langle h_{N}, x \rangle_{H})$ for
some $h_{1}, \dots, H_{N} \in W_{\ast}$. Extend $\{h_{1}, \dots, h_{N}\}$ to
an orthonormal basis $\{h_{n}\} \subset W_{\ast}$ for $H$, and use this basis
to define $P_{n}$ as in Corollary \ref{HS0-dense}. In particular, $P_{n} A
P_{n} \in HS_{0}$ (and is also skew-adjoint), and $P_{n} A P_{n} \to A$ in
$HS$ norm. Now by Theorem \ref{h.the.2.1}, we have
\[
\mathbb{E}\left[  f\left(  P_{n}B_{T}\right)  e^{i\int_{0}^{T}\left\langle
P_{n}AP_{n}B_{t},dB_{t}\right\rangle _{H}}\right]  =\mathbb{E}\left[  f\left(
P_{n}B_{T}\right)  e^{-\frac{1}{2}\int_{0}^{T}\left\Vert P_{n} AP_{n}%
B_{t}\right\Vert _{H}^{2}dt}\right]  .
\]
Now we pass to the limit. For $n \ge N$ we have $f(P_{n} B_{T}) = f(B_{T})$.
Next, the continuity of the map in Proposition \ref{integral-bounded} shows
that $\int_{0}^{T}\left\langle P_{n}AP_{n}B_{t},dB_{t}\right\rangle _{H}
\to\int_{0}^{T}\left\langle AB_{t},dB_{t}\right\rangle _{H}$ in $L^{2}%
(\mathbb{P})$.

Finally, Proposition \ref{ABt-bounded} tells us that $P_{n} A P_{n} B_{t}$
converges to $A B_{t}$ in $\mathcal{M}_{T}(H)$; that is, as random elements of
$C([0,T]; H)$, they converge in $L^{2}(\mathbb{P})$. The map $\mathbf{x}
\mapsto\int_{0}^{T} \|\mathbf{x}(t)\|^{2}_{H}\,dt$ is continuous on
$C([0,T];H)$, so by continuous mapping we have $\int_{0}^{T} \|P_{n} A P_{n}
B_{t}\|_{H}^{2}\,dt \to\int_{0}^{T} \|A B_{t}\|_{H}^{2}\,dt$ in probability.

Putting this all together and using continuous mapping again, we have
\begin{align*}
f\left(  P_{n}B_{T}\right)  e^{i\int_{0}^{T}\left\langle P_{n}AP_{n}%
B_{t},dB_{t}\right\rangle _{H}}  &  \rightarrow f\left(  B_{T}\right)
e^{i\int_{0}^{T}\left\langle AB_{t},dB_{t}\right\rangle _{H}} &  &
\text{i.p.}\\
f\left(  P_{n}B_{T}\right)  e^{-\frac{1}{2}\int_{0}^{T}\left\Vert P_{n}%
AP_{n}B_{t}\right\Vert _{H}^{2}dt}  &  \rightarrow f\left(  B_{T}\right)
e^{-\frac{1}{2}\int_{0}^{T}\left\Vert AB_{t}\right\Vert _{H}^{2}dt} &  &
\text{i.p.}%
\end{align*}
Everything in sight is bounded, so the dominated convergence theorem gives us
the conclusion, still assuming that $f$ is a cylinder function. An application
of the multiplicative system theorem 
then covers the case that $f$ is merely bounded and measurable.
\end{proof}

\section{Heisenberg heat kernels}

\label{h.sec.4}

As in previous sections, $(W,H,\mu)$ is an abstract Wiener space, $B_{t}$ is a
Brownian motion on $W$, $C$ is a finite-dimensional Hilbert with inner product
$\cdot$, and $\omega: H \times H \to C$ is a skew-symmetric Hilbert--Schmidt
bilinear form which is surjective.

\begin{notation}
For $\lambda\in C$, let $\Omega_{\lambda}\in HS$ be the Hilbert--Schmidt
operator defined by $\langle\Omega_{\lambda}h, k \rangle= \omega(h,k)
\cdot\lambda$ for all $h,k\in H$.
\end{notation}

\begin{definition}
\label{h.the.4.1} A \textbf{hypoelliptic Brownian motion} on $G$ is the
$G$-valued process $g_{t} = (B_{t}, Z_{t})$, where $Z_{t} = \frac{1}{2}
\int_{0}^{t} \omega(B_{s}, dB_{s})$. To be precise, $Z_{t}$ is defined by
$Z_{t} \cdot\lambda= \frac{1}{2} \int_{0}^{t} \langle\Omega_{\lambda}B_{t},
dB_{t} \rangle_{H}$, where the stochastic integral is defined as in
Proposition \ref{integral-bounded}. For $T>0$, let $\nu_{T}=\mathrm{Law}%
(g_{T})$ denote the \textbf{hypoelliptic heat kernel measure} at time $T$ on
$G$.
\end{definition}

We note the scaling relation
\begin{equation}
g_{ct}\overset{d}{=}(\sqrt{c}B_{t},cZ_{t})\quad\text{in law}. \label{gt-scale}%
\end{equation}

Alternatively, following the development in Section \ref{h.sec.3}, we could
also define $Z_{t}$ as the limit of the continuous $C$-valued processes
$\frac{1}{2} \int_{0}^{t} \omega(P_{n} B_{s}, dP_{n} B_{s})$, for $P_{n}$ as
in Corollary \ref{HS0-dense}.

\begin{notation}
\label{End-notation} Let $\operatorname*{End}(C)$ denote the space of linear
transformations of the finite-dimensional Hilbert space $C$. We will consider
$\operatorname*{End}(C)$ as a finite-dimensional Banach space equipped with
the operator norm, which we denote by $\|\cdot\|_{op}$. Also, let
$\operatorname*{End}_{+}(C) \subset\operatorname*{End}(C)$ denote the closed
cone of self-adjoint, nonnegative definite transformations.
\end{notation}

\begin{notation}
\label{rhoT-def} For $\mathbf{x},\mathbf{y}\in C([0,T];H)$, let $\rho
_{T}(\mathbf{x},\mathbf{y}) \in\operatorname*{End}(C)$ be defined by
\[
\rho_{T}(\mathbf{x},\mathbf{y})\lambda\cdot\lambda^{\prime}:=\frac{1}{4}%
\int_{0}^{T}\langle\Omega_{\lambda}\mathbf{x}(s),\Omega_{\lambda^{\prime}%
}\mathbf{y}(s)\rangle_{H}\,ds.
\]
As usual, we will let $\rho_{T}(\mathbf{x})=\rho_{T}(\mathbf{x},\mathbf{x})$,
and note that $\rho_{T}(\mathbf{x}) \in\operatorname*{End}_{+}(C)$.

By Proposition \ref{ABt-bounded}, $\rho_{T}(\mathbf{x},\mathbf{y})$ also makes
sense (as a random linear transformation) if one or both of $\mathbf{x}%
,\mathbf{y}$ is replaced by a $W$-valued Brownian motion $B$. Henceforth
$\rho_{T}$ by itself will denote the random linear transformation $\rho
_{T}(B)$, so that
\begin{equation}
\label{rhoTB}\rho_{T}\lambda\cdot\lambda^{\prime}=\rho_{T}(B)\lambda
\cdot\lambda^{\prime}=\frac{1}{4}\int_{0}^{T}\langle\Omega_{\lambda}%
B_{s},\Omega_{\lambda^{\prime}}B_{s}\rangle_{H}\,ds.
\end{equation}

\end{notation}

In this notation, (\ref{key-identity-eqn}) reads
\begin{equation}
\mathbb{E}\left[  f(B_{T})e^{i\lambda\cdot Z_{T}}\right]  =\mathbb{E}\left[
f(B_{T})e^{-\frac{1}{2}\rho_{T}\lambda\cdot\lambda}\right]  .
\label{h.equ.4.3}%
\end{equation}

By making the change of variables $s=Ts^{\prime}$ in (\ref{rhoTB}), we get the
scaling relation\footnote{Here we have made use of the fact that $B_{T\left(
\cdot\right)  }\overset{d}{=}\sqrt{T}B_{\left(  \cdot\right)  }.$}
\begin{equation}
\rho_{T}\overset{d}{=}T^{2}\rho_{1}\quad\text{in law}. \label{rhoT-scale}%
\end{equation}

The following essential fact will be proved (in a stronger form) in the next
section; see Corollary \ref{h.the.5.7}.

\begin{proposition}
\label{p.pd}Almost surely, the random linear transformation $\rho_{T}$ is
strictly positive definite.
\end{proposition}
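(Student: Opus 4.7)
The plan is to show $\rho_T$ has trivial kernel almost surely. Since $\rho_T \lambda \cdot \lambda = \tfrac14 \int_0^T \|\Omega_\lambda B_s\|_H^2 \, ds \ge 0$, the operator is already symmetric and positive semidefinite by construction; the content is in strict positivity.

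The first step is to establish, for each fixed $\lambda \in C \setminus \{0\}$, that $\rho_T \lambda \cdot \lambda > 0$ almost surely. Surjectivity of $\omega$ forces $\Omega_\lambda \ne 0$ in $HS$, since $\langle \Omega_\lambda h, k\rangle = \omega(h,k)\cdot\lambda$ vanishing on $H \times H$ would place $\lambda$ orthogonal to a spanning set of $C$. Proposition \ref{ABt-bounded} then makes $\{\Omega_\lambda B_t\}$ a continuous $H$-valued martingale with $\mathbb{E}\|\Omega_\lambda B_T\|_H^2 = T\|\Omega_\lambda\|_{HS}^2 > 0$, so the centered Gaussian $\Omega_\lambda B_T \in H$ has nontrivial covariance and in particular $\mathbb{P}(\Omega_\lambda B_T = 0) = 0$. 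Path continuity then yields $\Omega_\lambda B_s \ne 0$ on an open neighborhood of $T$ almost surely, which integrates to $\rho_T\lambda\cdot\lambda > 0$.

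The main obstacle is upgrading this to ``$\det \rho_T > 0$ a.s.,'' since a countable union over $\lambda \in C \setminus \{0\}$ is not available. I would handle this with a Blumenthal zero-one law argument. Set $p := \mathbb{P}(\det\rho_T > 0)$. The scaling relation \eqref{rhoT-scale} gives $\det\rho_t \overset{d}{=} t^{2d}\det\rho_1$, so $\mathbb{P}(\det\rho_t > 0) = p$ for all $t > 0$, and monotonicity of $t \mapsto \rho_t$ in the positive semidefinite order makes $\tau := \inf\{t > 0 : \det \rho_t > 0\}$ satisfy $\{\tau \le t\} = \{\det\rho_t > 0\}$. Thus $\mathbb{P}(\tau = 0) = \lim_n \mathbb{P}(\det\rho_{1/n} > 0) = p$; since $\{\tau = 0\} \in \mathcal{F}_{0+}$, Blumenthal forces $p \in \{0,1\}$.

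To rule out $p = 0$, I would exhibit a path along which $\rho_T$ is strictly positive definite and transport the positivity to a Brownian neighborhood. Using surjectivity, choose $h_1, \ldots, h_d \in H$ with $\sum_{i=1}^d \omega(h_i, H) = C$, and consider the polynomial path $\phi(s) = \sum_{i=1}^d s^i h_i$: a polynomial-identity argument shows $\rho_T(\phi) \lambda \cdot \lambda$ vanishes only when $\Omega_\lambda h_i = 0$ for every $i$, forcing $\lambda \in \bigcap_i \omega(h_i, H)^\perp = \{0\}$, so $\rho_T(\phi) > 0$. Transport is done via finite-rank projection $P_n$ as in Corollary~\ref{HS0-dense}: for $n$ large enough the $h_i$ lie in $\operatorname{Ran}(P_n)$ and surjectivity is preserved (by continuity of $\omega$ on $H \times H$ and strong convergence $P_n \to I$), reducing the question to a finite-dimensional Wiener space where $\rho_T$ is a continuous functional of the path and the classical support theorem yields positive probability for a neighborhood of $\phi$.
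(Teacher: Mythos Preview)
Your first two steps are fine: the fixed-$\lambda$ argument is correct, and the Blumenthal zero--one law reduction to $p\in\{0,1\}$ is clean and correct (using continuity of $t\mapsto\rho_t$ and monotonicity in the positive semidefinite order).

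The gap is in the final paragraph, where you try to rule out $p=0$ via a support-theorem argument.  The map $B\mapsto\rho_T(B)$ is \emph{not} continuous on $C([0,T];W)$ in the paper's setting, because $\omega$ is only assumed Hilbert--Schmidt on $H\times H$ (see Section~\ref{s.mga}); the processes $\Omega_\lambda B$ are built as $L^2$ limits in Proposition~\ref{ABt-bounded}, not as pathwise continuous functionals of $B$.  Your proposed fix via the projections $P_n$ does not close this: you obtain positive probability (indeed probability one, in finite dimensions) for $\det\rho_T^{(n)}(P_nB)>0$, where $\rho_T^{(n)}$ is built from $P_n\Omega_\lambda P_n$, but there is no monotone or pointwise inequality relating $\rho_T^{(n)}(P_nB)$ to $\rho_T(B)$.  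Convergence $\rho_T^{(n)}(P_nB)\to\rho_T(B)$ in probability is available, but that does not prevent $\det\rho_T(B)=0$ a.s.\ while each $\det\rho_T^{(n)}(P_nB)>0$ a.s.\ (the smallest eigenvalue could simply collapse in the limit).  So the ``reduction to a finite-dimensional Wiener space'' does not actually transport positivity back.

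The paper handles exactly this uncountable-union obstacle in a different and more robust way.  Rather than a zero--one law, it exploits finite-dimensionality of $C$ directly: cover the unit sphere of $C$ by a finite $\delta$-net $\{\lambda_i\}$, control $\mathbb{P}(\rho_T\lambda_i\cdot\lambda_i<\varepsilon)$ for each $i$ by a quantitative small-ball estimate (Lemma~\ref{ABt-small-ball-exp}, ultimately Theorem~\ref{bt-small-ball}) uniformly via $\inf_{|\lambda|=1}\|\Omega_\lambda\|_{op}>0$, control the passage from $\lambda_i$ to nearby $\lambda$ by a large-deviation bound on $\|\rho_T\|_{op}$ (Lemma~\ref{rhoT-large-dev}), and take a union bound over the finitely many net points.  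This yields the exponential tail $\mathbb{P}(\|\rho_T^{-1}\|_{op}>r)\le Ke^{-kr}$ of Lemma~\ref{rhoT-estimate}, from which a.s.\ invertibility is immediate---and, crucially, so is $\|\rho_T^{-1}\|_{op}\in L^{\infty-}$, which is what the heat-kernel formula and all of Section~\ref{h.sub.7.1} actually require.  Even a successful qualitative argument along your lines would not supply that integrability.
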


In particular, $\rho_{T}^{-1}$ exists almost surely and is also strictly
positive definite. Given this, we can derive a formula for the heat kernel
$\nu_{T}$.

\begin{theorem}
\label{J0-formula} For any bounded measurable function $F:G\rightarrow
\mathbb{R}$, we have
\begin{equation}
\mathbb{E}[F(g_{T})]=\mathbb{E}\int_{C}F(B_{T},c)J_{T}^{0}(B,c)\,dc
\label{J0-eqn}%
\end{equation}
where $dc$ denotes Lebesgue measure on $C$, and
\[
J_{T}^{0}(B,c):=\frac{\exp\left(  -\frac{1}{2}\rho_{T}(B)^{-1}c\cdot c\right)
}{\sqrt{\det(2\pi\rho_{T}(B))}}.
\]
Moreover, $J_{T}^{0}(B,\cdot)>0$, $\mathbb{P}\otimes m$ -- a.e. where $m$
denotes Lebesgue measure on $C.$
\end{theorem}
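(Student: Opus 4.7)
The plan is to combine the key identity \eqref{h.equ.4.3} with Fourier inversion on the finite-dimensional space $C$. By a multiplicative systems argument, it suffices to establish \eqref{J0-eqn} for functions of the product form $F(x,c) = f(x)\psi(c)$, with $f : W \to \mathbb{R}$ bounded measurable and $\psi$ in the Schwartz class $\mathcal{S}(C)$. For such $\psi$ we write
\[
\psi(c) = \int_C \hat\psi(\lambda)\, e^{i\lambda\cdot c}\, \frac{d\lambda}{(2\pi)^d},
\]
with $\hat\psi \in \mathcal{S}(C) \subset L^1(C)$.

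The first step is to insert this representation and interchange expectation with the $\lambda$-integral. Since $f$ is bounded, $\hat\psi \in L^1(C)$, and $|e^{i\lambda\cdot Z_T}| = 1$, Fubini gives
\[
\mathbb{E}[f(B_T)\psi(Z_T)] = \int_C \hat\psi(\lambda)\, \mathbb{E}\!\left[f(B_T)e^{i\lambda\cdot Z_T}\right]\frac{d\lambda}{(2\pi)^d}.
\]
Applying the key identity \eqref{h.equ.4.3} to the inner expectation and Fubini again (justified by $|e^{-\frac{1}{2}\rho_T\lambda\cdot\lambda}| \le 1$ and $\hat\psi \in L^1$), I obtain
\[
\mathbb{E}[F(g_T)] = \mathbb{E}\left[f(B_T)\int_C \hat\psi(\lambda)\, e^{-\frac{1}{2}\rho_T\lambda\cdot\lambda}\frac{d\lambda}{(2\pi)^d}\right].
\]

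The second step is to evaluate the $\lambda$-integral pointwise in $\omega$ using Proposition \ref{p.pd}: almost surely $\rho_T = \rho_T(B)$ is strictly positive definite, so $e^{-\frac{1}{2}\rho_T\lambda\cdot\lambda}$ is the characteristic function of a centered Gaussian on $C$ with covariance $\rho_T$. By the standard Parseval/convolution identity for the Gaussian Fourier transform,
\[
\int_C \hat\psi(\lambda)\, e^{-\frac{1}{2}\rho_T\lambda\cdot\lambda}\,\frac{d\lambda}{(2\pi)^d} = \int_C \psi(c)\, \frac{\exp\!\left(-\tfrac{1}{2}\rho_T^{-1}c\cdot c\right)}{\sqrt{\det(2\pi\rho_T)}}\,dc = \int_C \psi(c)\, J_T^0(B,c)\,dc,
\]
and substituting back yields \eqref{J0-eqn} for product $F$.

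The third step is extension to arbitrary bounded measurable $F : G \to \mathbb{R}$. Both sides of \eqref{J0-eqn} define bounded linear functionals on the space of bounded Borel $F$, and both are continuous under bounded pointwise limits by dominated convergence (the right side is dominated since $\int_C J_T^0(B,c)\,dc = 1$ a.s., so $(B,c)\mapsto J_T^0(B,c)$ is jointly measurable and $\mathbb{P}\otimes m$-integrable on any set where $F$ is supported). The multiplicative system theorem of \cite[Appendix A, p.~309]{Janson1997} — applied to the algebra of products $f(x)\psi(c)$ with $f$ bounded measurable on $W$ and $\psi \in \mathcal{S}(C)$, which generates the Borel $\sigma$-algebra of $G$ — then gives the identity for all bounded measurable $F$. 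Finally, the claim $J_T^0(B,\cdot)>0$ $\mathbb{P}\otimes m$-a.e. follows immediately from Proposition \ref{p.pd}: on the full-measure event where $\rho_T$ is strictly positive definite, the formula for $J_T^0$ produces a strictly positive Gaussian density on $C$. The main technical obstacle is just the a.s.\ positive definiteness of $\rho_T$, which is deferred to the next section; granting that, the argument is a clean Fourier inversion.
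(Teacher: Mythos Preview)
Your proof is correct and follows essentially the same approach as the paper: apply the key identity \eqref{h.equ.4.3}, use the Gaussian Fourier transform on $C$ (valid because $\rho_T$ is a.s.\ positive definite by Proposition~\ref{p.pd}), and extend by the multiplicative system theorem. The only cosmetic difference is that the paper verifies \eqref{J0-eqn} directly on the multiplicative class $F(x,c)=f(x)e^{i\lambda\cdot c}$, whereas you first pass to Schwartz functions $\psi\in\mathcal{S}(C)$ and decompose them via Fourier inversion; both routes amount to the same computation.
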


\begin{proof}
The fact that $J_{T}^{0}(B,\cdot)>0$, $\mathbb{P}\otimes m$ -- a.e. follows by
Fubini's theorem along with Proposition \ref{p.pd}. If $F$ is of the form
$F(x,c)=f(x)e^{i\lambda\cdot c}$, for some bounded measurable $f:W\rightarrow
\mathbb{R}$ and some $\lambda\in C$, then by (\ref{h.equ.4.3}) and the
Gaussian Fourier transform formula,
\begin{align}
\mathbb{E}\left[  f(B_{T})e^{i\lambda\cdot Z_{T}}\right]   &  =\mathbb{E}%
\left[  f(B_{T})e^{-\frac{1}{2}\rho_{T}\lambda\cdot\lambda}\right] \nonumber\\
&  =\mathbb{E}\left[  f(B_{T})\int_{C}e^{i\lambda\cdot c}\frac{\exp\left(
-\frac{1}{2}\rho_{T}^{-1}c\cdot c\right)  }{\sqrt{\det(2\pi\rho_{T})}}\right]
\nonumber\\
&  =\mathbb{E}\int_{C}f(B_{T})e^{i\lambda\cdot c}J_{T}^{0}(B,c)\,dc
\label{e.jj}%
\end{align}
as desired. Taking $f=1$, $\lambda=0$ in Eq. (\ref{e.jj}) shows that
$\mathbb{E}\int_{C}J_{T}^{0}(B,c)\,dc=1$.

The proof is now easily completed with the help of the multiplicative system
theorem.  Indeed, the
set of all such functions $F(x,c)=f(x)e^{i\lambda\cdot c}$ is a multiplicative
system, and it is standard to show that it generates the Borel $\sigma
$-algebra of $G$. The set of functions $F$ for which (\ref{J0-eqn}) holds is a
vector space. Therefore if $F_{n}$ is a sequence of functions satisfying
(\ref{J0-eqn}) and $F_{n}\rightarrow F$ boundedly, the dominated convergence
theorem shows that $F$ also satisfies (\ref{J0-eqn}). Having verified the
hypotheses of the multiplicative system theorem, we conclude that
(\ref{J0-eqn}) holds for all bounded measurable $F$.
\end{proof}

\begin{corollary}
\label{gamma-formula} The heat kernel measure $\nu_{T}$ is absolutely
continuous to the product measure $d\mu_{T}\otimes dc$, and the Radon--Nikodym
derivative is given by
\[
\gamma_{T}(x,c):=\mathbb{E}\left[  J_{T}^{0}(B,c)\mid B_{T}=x\right]  .
\]
(That is, for each $c$, $\gamma_{T}(\cdot,c)$ is a measurable function on $W$
such that $\gamma_{T}(B_{T},c)=\mathbb{E}\left[  J_{T}^{0}(B,c)\mid
B_{T}\right]  $ almost surely; this function is unique up to $\mu_{T}$-null sets.)
\end{corollary}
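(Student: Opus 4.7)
The plan is to deduce this corollary directly from Theorem \ref{J0-formula} by conditioning on $B_T$ and invoking the tower property of conditional expectation. The essential observation is that Theorem \ref{J0-formula} already expresses $\mathbb{E}[F(g_T)]$ as a double integral against the reference measure $\mathbb{P}\otimes m$ on $\Omega\times C$; what remains is to integrate out the Brownian path keeping only its endpoint.

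First, I would fix a bounded measurable $F:G\to\mathbb{R}$ and start from the identity
\[
\mathbb{E}[F(g_T)] = \mathbb{E}\int_C F(B_T,c)\,J_T^0(B,c)\,dc
\]
provided by Theorem \ref{J0-formula}. Since the integrand is nonnegative when $F\ge 0$, Fubini--Tonelli lets me swap $\mathbb{E}$ and $\int_C dc$; the general case follows by writing $F = F^+ - F^-$ using that $\mathbb{E}\int_C J_T^0(B,c)\,dc = 1$ (which was noted in the proof of Theorem \ref{J0-formula}). For each fixed $c\in C$, I then apply the tower property conditioning on $\sigma(B_T)$:
\[
\mathbb{E}\bigl[F(B_T,c)\,J_T^0(B,c)\bigr] = \mathbb{E}\bigl[F(B_T,c)\,\mathbb{E}[J_T^0(B,c)\mid B_T]\bigr] = \mathbb{E}\bigl[F(B_T,c)\,\gamma_T(B_T,c)\bigr],
\]
where $\gamma_T(\cdot,c)$ is any measurable version of the conditional expectation. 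Since $B_T$ has law $\mu_T$, the right-hand side equals $\int_W F(x,c)\,\gamma_T(x,c)\,d\mu_T(x)$. Reassembling and swapping back via Fubini yields
\[
\mathbb{E}[F(g_T)] = \int_W\!\int_C F(x,c)\,\gamma_T(x,c)\,dc\,d\mu_T(x),
\]
which is exactly the statement $\nu_T(dx,dc) = \gamma_T(x,c)\,\mu_T(dx)\,m(dc)$.

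The only genuine subtlety is producing a jointly measurable version of $(x,c)\mapsto \gamma_T(x,c)$, so that the disintegration makes sense and Fubini is legitimate on the right-hand side. This can be handled by noting that $J_T^0(B,c)$ is jointly measurable in $(\omega,c)$ and invoking a standard disintegration (regular conditional probability) of $\mathbb{P}$ given $\sigma(B_T)$; equivalently, one can simply define $\gamma_T(x,c)$ as the Radon--Nikodym derivative of the measure $c\mapsto \mathbb{E}[J_T^0(B,c)\mathbf{1}_{B_T\in\cdot}]$ with respect to $\mu_T$, which automatically gives joint measurability. The uniqueness clause up to $\mu_T$-null sets is then the usual uniqueness of Radon--Nikodym derivatives. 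I do not anticipate any further obstacle: positivity of $\gamma_T$ follows for $\mu_T\otimes m$-a.e.\ $(x,c)$ from the a.e.\ positivity of $J_T^0$ already recorded in Theorem \ref{J0-formula}.
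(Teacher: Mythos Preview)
Your proposal is correct and is exactly the natural argument the paper has in mind: the corollary is stated without its own proof, as it is meant to follow immediately from Theorem \ref{J0-formula} by conditioning on $B_T$ and using that $\operatorname{Law}(B_T)=\mu_T$. Your write-up simply makes explicit the Fubini/tower-property steps and the joint-measurability point that the paper leaves tacit.
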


A few properties are immediately apparent from this formula.

\begin{corollary}
\label{gamma-positive} We have $\gamma_{T} > 0$, $d\mu_{T} \otimes dc$-almost
everywhere on $G$.
\end{corollary}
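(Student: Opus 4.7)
The plan is to leverage the fact that the integrand $J_T^0(B,c)$ is almost surely strictly positive, as already established in Theorem \ref{J0-formula}, and to propagate that positivity through the conditional expectation defining $\gamma_T$.

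First, I would fix $c \in C$ and focus on the (unconditioned) random variable $J_T^0(B,c)$. By Proposition \ref{p.pd}, $\rho_T$ is $\mathbb{P}$-almost surely strictly positive definite, so $\rho_T^{-1}$ exists a.s. and $\det(2\pi\rho_T) > 0$ a.s.; consequently
\[
J_T^0(B,c) = \frac{\exp\bigl(-\tfrac{1}{2}\rho_T(B)^{-1}c\cdot c\bigr)}{\sqrt{\det(2\pi\rho_T(B))}} > 0 \quad \mathbb{P}\text{-a.s.}
\]
Note that this holds for \emph{every} $c$, not merely for $m$-a.e.\ $c$, since the null set $\{\rho_T \text{ not invertible}\}$ does not depend on $c$.

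Next I would invoke the standard fact that a strictly positive, integrable random variable has strictly positive conditional expectation almost surely. Concretely, if $Y > 0$ $\mathbb{P}$-a.s.\ with $\mathbb{E}|Y| < \infty$, then $\mathbb{E}[Y \mid \mathcal{G}] > 0$ $\mathbb{P}$-a.s.\ for any sub-$\sigma$-algebra $\mathcal{G}$: the event $A := \{\mathbb{E}[Y\mid\mathcal{G}] = 0\}$ lies in $\mathcal{G}$, so $\mathbb{E}[Y \mathbf{1}_A] = \mathbb{E}[\mathbb{E}[Y\mid\mathcal{G}]\mathbf{1}_A] = 0$, forcing $\mathbb{P}(A) = 0$ since $Y > 0$ a.s. Integrability of $J_T^0(B,c)$ follows from Theorem \ref{J0-formula} (taking $F(x,c') = \mathbf{1}_{\{c'\in K\}}$ for compact $K$, or directly from $\mathbb{E}\int_C J_T^0(B,c)\,dc = 1$, which guarantees $\mathbb{E}\,J_T^0(B,c) < \infty$ for $m$-a.e.\ $c$; for the remaining null set of exceptional $c$'s, one may absorb them into the final null set produced by Fubini). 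Applying this to $Y = J_T^0(B,c)$ and $\mathcal{G} = \sigma(B_T)$ yields, for each admissible $c$,
\[
\gamma_T(B_T, c) = \mathbb{E}[J_T^0(B,c) \mid B_T] > 0 \quad \mathbb{P}\text{-a.s.},
\]
which, since $B_T$ has law $\mu_T$, says $\gamma_T(\cdot, c) > 0$ $\mu_T$-a.e.

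Finally, I would conclude via Fubini applied to the jointly measurable function $\gamma_T$ on $W \times C$: the set $N := \{(x,c) : \gamma_T(x,c) \le 0\}$ has $(\mu_T \otimes m)$-measure zero because each of its $c$-slices $N_c := \{x : \gamma_T(x,c) \le 0\}$ has $\mu_T(N_c) = 0$. The main (and really only) subtlety is justifying joint measurability of a version of $\gamma_T(x,c)$ in $(x,c)$ so that Fubini may be applied; this is routine because the map $(B,c) \mapsto J_T^0(B,c)$ is jointly measurable and conditional expectation can be taken with a regular version, but it is worth flagging as the one non-trivial bookkeeping step.
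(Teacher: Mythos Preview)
Your proposal is correct and follows essentially the same argument as the paper: fix $c$, use the almost-sure strict positivity of $J_T^0(B,c)$ to deduce that its conditional expectation $\gamma_T(B_T,c)$ is strictly positive $\mathbb{P}$-a.s., then apply Fubini. You have simply spelled out in more detail the steps the paper leaves implicit (the standard argument that conditional expectation preserves strict positivity, the integrability check, and the joint-measurability bookkeeping for Fubini).
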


\begin{proof}
For each $c \in C$, $J^{0}_{T}(B, c) > 0$ $\mathbb{P}$-almost surely, hence
its conditional expectation $\gamma_{T}(B_{T}, c)$ is also strictly positive
$\mathbb{P}$-almost surely. In other words, for each $c \in C$, we have
$\gamma_{T}(x,c) > 0$ for $\mu_{T}$-almost every $x \in W$. The conclusion
follows by Fubini's theorem.
\end{proof}

\begin{corollary}
\label{inversion-invariant} For any $T>0$, the heat kernel measure $\nu_{T}$
is invariant under the inversion map $g\mapsto g^{-1}$; that is,
\[
\mathbb{E}[F(g_{T})]=\int_{G}F(g)\,d\nu_{T}(g)=\int_{G}F(g^{-1})\,d\nu
_{T}(g)=\mathbb{E}[F(g_{T}^{-1})].
\]
In other words, $g_{T}$ and $g_{T}^{-1}$ have the same law.
\end{corollary}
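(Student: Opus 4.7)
The plan is to use the explicit formula from Theorem \ref{J0-formula} together with two simple symmetries of $J_T^0$ and one of Brownian motion. Observe that $g_T^{-1} = (-B_T, -Z_T)$, so it suffices to show that $(B_T, Z_T) \stackrel{d}{=} (-B_T, -Z_T)$.

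First I would record the two symmetries of the kernel $J_T^0(B, c) = \exp(-\tfrac{1}{2}\rho_T(B)^{-1} c \cdot c)/\sqrt{\det(2\pi\rho_T(B))}$. Since $\rho_T$ is \emph{quadratic} in $B$ (from (\ref{rhoTB})), we have $\rho_T(-B) = \rho_T(B)$, hence $J_T^0(-B, c) = J_T^0(B, c)$. Since $J_T^0(B, c)$ depends on $c$ only through the quadratic form $\rho_T(B)^{-1} c \cdot c$, we also have $J_T^0(B, -c) = J_T^0(B, c)$. Thus $J_T^0$ is jointly even: $J_T^0(-B, -c) = J_T^0(B, c)$.

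Given this, I would compute, for any bounded measurable $F \colon G \to \mathbb{R}$, using (\ref{J0-eqn}):
\begin{align*}
\mathbb{E}[F(g_T^{-1})]
&= \mathbb{E}[F(-B_T, -Z_T)]
= \mathbb{E} \int_C F(-B_T, -c)\, J_T^0(B, c)\, dc \\
&= \mathbb{E} \int_C F(-B_T, c')\, J_T^0(B, -c')\, dc'
= \mathbb{E} \int_C F(-B_T, c')\, J_T^0(B, c')\, dc',
\end{align*}
where the third equality is the change of variable $c' = -c$ (Lebesgue measure on $C$ being reflection invariant) and the last uses evenness of $J_T^0$ in $c$. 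Now I would invoke the fact that $-B$ is again a standard $W$-valued Brownian motion (since $\mu_t$ is symmetric); replacing $B$ by $-B$ in the expectation and using $J_T^0(-B, c') = J_T^0(B, c')$, this becomes $\mathbb{E}\int_C F(B_T, c')\, J_T^0(B, c')\, dc' = \mathbb{E}[F(g_T)]$, which is exactly what we wanted.

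There is essentially no obstacle here: the formula in Theorem \ref{J0-formula} has already done all of the analytic work (in particular, the measurability of $\rho_T$ and almost sure positive definiteness from Proposition \ref{p.pd}), and the conclusion reduces to two evident symmetries and the reflection invariance of $\mu_T$. The only point that merits care is that the identity $\rho_T(-B) = \rho_T(B)$ must be interpreted at the level of the continuous extension given by Proposition \ref{ABt-bounded}, but this is immediate from the bilinearity of $(\mathbf{x},\mathbf{y}) \mapsto \rho_T(\mathbf{x},\mathbf{y})$ in Notation \ref{rhoT-def}.
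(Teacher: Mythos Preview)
Your proof is correct and follows exactly the same approach as the paper's: the paper's one-line argument is precisely that $J_T^0(-B,-c)=J_T^0(B,c)$ combined with the symmetry of the law of Brownian motion, and you have simply written out the details of that computation.
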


\begin{proof}
This follows immediately from the observation that $J^{0}_{T}(-B, -c) =
J^{0}_{T}(B,c)$ together with the symmetry of the law of Brownian motion.
\end{proof}

This fact can also be extracted from finite-dimensional approximations; see
\cite[Corollary 4.9]{A35}. It is worth noting that, in contrast to flat
Brownian motion, the \emph{processes} $\{g_{t}\}_{t\ge0}$ and $\{g_{t}%
^{-1}\}_{t\ge0}$ generally do \emph{not} have the same law.

\section{Estimates on $\rho_{T}$}

\label{rhoT-sec}

In this section, we derive technical estimates on the random linear
transformation $\rho_{T}$, which were used in the previous section to define
the heat kernel and will be needed in the sequel for further development of
the smoothness properties of the heat kernel. In particular, we need to show
that $\rho_{T}$ is almost surely invertible, and that its inverse is unlikely
to be large.  Throughout this section, $T > 0$ is fixed, and, for any 
$A:H\rightarrow H$, $\|A\|_{op}$ denotes the standard operator norm of $A$ on
$H$.

\subsection{Small ball estimates}

We will need the following \textquotedblleft small ball\textquotedblright%
\ result, which essentially says that a Brownian motion is unlikely to stay
close to the origin. See \cite[Lemma 2.3]{li-small-ball-Lp} for a proof (of a
more general statement) as well as historical notes.

\begin{theorem}
\label{bt-small-ball} Let $b_{t}$ be a one-dimensional Brownian motion. Then
\[
\lim_{\varepsilon\to0} \varepsilon\log\mathbb{P} \left(  \int_{0}^{1}
b_{t}^{2}\,dt < \varepsilon\right)  = -\frac{1}{8}.
\]
In particular, there is a positive constant $K_{0}$ such that for all
$\varepsilon> 0$,
\[
\mathbb{P} \left(  \int_{0}^{1} b_{t}^{2}\,dt \le\varepsilon\right)  \le K_{0}
e^{-1/4\varepsilon}.
\]

\end{theorem}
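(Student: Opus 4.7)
The plan is to reduce everything to the explicit Laplace transform
\[
\phi(s) := \mathbb{E}\bigl[e^{-s\int_0^1 b_t^2\,dt}\bigr] = \cosh(\sqrt{2s})^{-1/2},
\]
a classical identity for the squared $L^2[0,1]$ norm of Brownian motion. I would derive $\phi$ by one of two standard routes. The first uses the Karhunen--Lo\`eve expansion $b_t = \sqrt{2}\sum_{n\geq 1}\xi_n\sqrt{\lambda_n}\sin((n-\tfrac12)\pi t)$, with eigenvalues $\lambda_n = ((n-\tfrac12)\pi)^{-2}$ and i.i.d.\ standard Gaussians $\xi_n$; independence gives $\phi(s)=\prod_{n\geq 1}(1+2s\lambda_n)^{-1/2}$, and the Euler product $\cosh z = \prod_{n\geq 1}(1+z^2/((n-\tfrac12)\pi)^2)$ completes it. The second is a Feynman--Kac / Girsanov argument: applying It\^o's formula to $\tfrac12\sqrt{2s}\tanh(\sqrt{2s}(1-t))\,b_t^2$ produces, after exponentiating, a mean-one exponential martingale whose expectation identifies $\phi(s)$.

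For the quantitative upper bound I would apply exponential Chebyshev together with the crude estimate $\cosh z \geq \tfrac12 e^z$: for every $s>0$,
\[
\mathbb{P}\left(\int_0^1 b_t^2\,dt \le \varepsilon\right) \le e^{s\varepsilon}\phi(s) \le \sqrt{2}\,\exp\bigl(s\varepsilon - \tfrac12\sqrt{2s}\bigr).
\]
Choosing $s = 1/(8\varepsilon^2)$ minimizes the exponent and yields the tail bound $\sqrt{2}\,e^{-1/(8\varepsilon)}$, which absorbs into a constant $K_0$ to give an exponential bound of the form stated. This Chebyshev step automatically delivers the upper-bound half of the logarithmic limit, $\limsup_{\varepsilon\to 0}\varepsilon\log\mathbb{P}(\int_0^1 b_t^2\,dt<\varepsilon) \le -1/8$.

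The matching lower bound, producing the sharp rate $-1/8$, is the main obstacle, since it requires inverting large-$s$ Laplace asymptotics into small-$\varepsilon$ probability asymptotics. The cleanest route is a de Bruijn--type Tauberian theorem applied to $\phi(s) \sim \sqrt{2}\,e^{-\sqrt{s/2}}$ as $s\to\infty$; I would appeal to a standard reference (e.g.\ Bingham--Goldie--Teugels) for this inversion rather than reprove it, consistent with the fact that the paper simply cites \cite{li-small-ball-Lp}. A more hands-on alternative would truncate the Karhunen--Lo\`eve series at $N\sim\varepsilon^{-1/2}$, apply Gaussian anti-concentration on the first $N$ coordinates, and control the tail $\sum_{n>N}\lambda_n\xi_n^2$ separately; this avoids Tauberian machinery at the cost of a more delicate two-scale estimate.
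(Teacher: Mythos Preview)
The paper does not prove this result at all; it simply cites \cite{li-small-ball-Lp}. Your proposal therefore goes well beyond what the paper does, and your outline---explicit Laplace transform via Karhunen--Lo\`eve, exponential Chebyshev for the upper bound, and a de~Bruijn--type Tauberian inversion for the matching lower bound---is the standard and correct route to this small-ball estimate.

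One point deserves comment. Your Chebyshev optimization correctly yields
\[
\mathbb{P}\Bigl(\int_0^1 b_t^2\,dt \le \varepsilon\Bigr) \le \sqrt{2}\,e^{-1/(8\varepsilon)},
\]
with exponent $1/(8\varepsilon)$, consistent with the sharp rate $-1/8$ in the limit statement. The paper's ``In particular'' clause, however, asserts the bound $K_0 e^{-1/(4\varepsilon)}$. That exponent is \emph{too strong}: since $e^{-1/(4\varepsilon)} = o\bigl(e^{-1/(8\varepsilon)}\bigr)$ as $\varepsilon\to 0$, a bound of the form $K_0 e^{-1/(4\varepsilon)}$ would force $\limsup_{\varepsilon\to 0}\varepsilon\log\mathbb{P}(\cdots)\le -1/4$, contradicting the limit $-1/8$ stated in the same theorem. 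This is almost certainly a typo in the paper (the downstream lemmas only need \emph{some} exponential rate $e^{-c/\varepsilon}$, and the constant propagates harmlessly). Your phrase ``absorbs into a constant $K_0$ to give an exponential bound of the form stated'' glosses over this; you should flag explicitly that the achievable constant in the exponent is $1/8$, not $1/4$, and that this is what the limit actually predicts.
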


\begin{lemma}
\label{ABt-small-ball-exp} Let $A \in HS$. Then for all $\varepsilon> 0$
\[
\mathbb{P} \left(  \int_{0}^{T} \|A B_{t}\|_{H}^{2}\,dt < \varepsilon\right)
\le K_{0} \exp\left(  -\frac{\|A\|_{op}^{2} T^{2}}{4 \varepsilon}
\right)
\]
where $K_{0}$ is the
constant from Theorem \ref{bt-small-ball}.
\end{lemma}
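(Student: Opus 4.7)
The plan is to reduce the multidimensional small-ball estimate to the one-dimensional estimate of Theorem \ref{bt-small-ball} by projecting onto a judiciously chosen direction in $H$. The key observation is that for any unit vector $u \in H$, Cauchy--Schwarz gives $\|AB_{t}\|_{H}^{2} \ge \langle AB_{t}, u\rangle_{H}^{2}$, and if we can identify $\langle AB_{t}, u\rangle_{H}$ with (a scalar multiple of) a standard one-dimensional Brownian motion, then we are essentially done.

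First, I would dispose of the trivial case $\|A\|_{op} = 0$ (choosing $K_{0} \ge 1$ if necessary so that the right-hand side exceeds $1$). Otherwise, since $A \in HS$ is compact, so is $AA^{\ast}$, and one may select a unit vector $u \in H$ with $\|A^{\ast}u\|_{H} = \|A\|_{op}$ via the spectral theorem for compact self-adjoint operators. Set $k := A^{\ast}u \in H$, so $\|k\|_{H} = \|A\|_{op}$. The process $b_{t} := \langle B_{t}, k\rangle_{H}/\|k\|_{H}$, defined via the Paley--Wiener extension of $\langle \cdot, k\rangle_{H}$ as in Notation \ref{Wstar}, is a standard one-dimensional Brownian motion.

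The one nontrivial point is to verify that $\langle AB_{t}, u\rangle_{H} = \langle B_{t}, A^{\ast}u\rangle_{H}$ for $A \in HS$ (rather than only $A \in HS_{0}$). For $A \in HS_{0}$ this is immediate, since then $AB_{t}$ is literally $A$ applied to the finite-dimensional projection $PB_{t}$, which lies in $H$. For general $A \in HS$, I would pick a sequence $A_{n} \in HS_{0}$ with $A_{n} \to A$ in $HS$-norm (Corollary \ref{HS0-dense}); Proposition \ref{ABt-bounded} then gives $A_{n}B_{t} \to AB_{t}$ in $\mathcal{M}_{T}(H)$, while $\|A_{n}^{\ast}u - A^{\ast}u\|_{H} \le \|A_{n} - A\|_{HS}$ implies $\langle B_{t}, A_{n}^{\ast}u\rangle_{H} \to \langle B_{t}, A^{\ast}u\rangle_{H}$ in $L^{2}(\mathbb{P})$ (because the Paley--Wiener map $h \mapsto \langle B_{t}, h\rangle_{H}$ is an isometry from $H$ into $L^{2}(\mathbb{P})$, up to a factor of $\sqrt{t}$). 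Equating the limits gives the identity.

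Combining these pieces yields the pointwise bound
\[
\int_{0}^{T}\|AB_{t}\|_{H}^{2}\,dt \;\ge\; \int_{0}^{T} \langle AB_{t}, u\rangle_{H}^{2}\,dt \;=\; \|A\|_{op}^{2}\int_{0}^{T} b_{t}^{2}\,dt,
\]
and Brownian scaling ($b_{Ts}/\sqrt{T}$ is again a standard Brownian motion) gives $\int_{0}^{T} b_{t}^{2}\,dt \overset{d}{=} T^{2}\int_{0}^{1}\tilde{b}_{s}^{2}\,ds$. Thus
\[
\mathbb{P}\!\left(\int_{0}^{T}\|AB_{t}\|_{H}^{2}\,dt < \varepsilon\right) \le \mathbb{P}\!\left(\int_{0}^{1}\tilde{b}_{s}^{2}\,ds < \frac{\varepsilon}{\|A\|_{op}^{2}T^{2}}\right),
\]
and Theorem \ref{bt-small-ball} applied with $\varepsilon/(\|A\|_{op}^{2}T^{2})$ in place of $\varepsilon$ gives the stated bound. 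The only real obstacle is the verification of the identity $\langle AB_{t}, u\rangle_{H} = \langle B_{t}, A^{\ast}u\rangle_{H}$ for $A \in HS \setminus HS_{0}$, but this is a routine limit argument using the machinery already developed in Section \ref{h.sec.3}.
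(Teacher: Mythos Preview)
Your proof is correct and follows essentially the same approach as the paper: project onto a one-dimensional direction realizing the operator norm, identify the resulting process as a scalar Brownian motion, and invoke Theorem~\ref{bt-small-ball}. The paper streamlines the argument slightly by first replacing $A$ with $\sqrt{A^{\ast}A}$ (which leaves $\|AB_t\|_H$ unchanged) so that $A$ may be assumed self-adjoint and $u$ taken as an eigenvector, whereas you work with an eigenvector of $AA^{\ast}$ and the identity $\langle AB_t, u\rangle_H = \langle B_t, A^{\ast}u\rangle_H$; your explicit verification of this identity by approximation from $HS_0$ is a point the paper leaves implicit. (Incidentally, $K_0 \ge 1$ is automatic from Theorem~\ref{bt-small-ball} by letting $\varepsilon \to \infty$, so no adjustment is needed in the trivial case.)
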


\begin{proof}
By rescaling, it is sufficient to consider the case $T=1$.

By replacing $A$ by $\sqrt{A^{*} A}$, we can assume that $A$ is self-adjoint
and nonnegative definite. In particular, $\lambda:= \|A\|_{op}$ is an
eigenvalue of $A$; let $u$ be a corresponding unit eigenvector. Then $\|A
B_{t}\|_{H}^{2} \ge|\langle AB_{t}, u \rangle|^{2} = \lambda^{2} |\langle
B_{t}, u \rangle|^{2}$. Here we interpret $\langle B_{t}, u_{i} \rangle$ in
the sense of Proposition \ref{ABt-bounded}, viewing $\langle\cdot, u_{i}
\rangle$ as a rank-one Hilbert--Schmidt operator on $H$. Then it is easy to
verify that $\langle B_{t}, u \rangle$ is a one-dimensional standard Brownian
motion. As such,
\begin{align*}
\mathbb{P} \left(  \int_{0}^{1} \|A B_{t}\|_{H}^{2}\,dt < \varepsilon\right)
&  \le\mathbb{P} \left(  \int_{0}^{1} b_{t}^{2}\,dt < \frac{\varepsilon
}{\lambda^{2}}\right) \\
&  \le K_{0} \exp\left(  -\frac{\lambda^{2}}{4 \varepsilon} \right)  .
\end{align*}
As $\lambda= \|A\|_{op}$ the proof is complete.
\end{proof}

We need an analogous statement when the Brownian motion is perturbed by a
one-dimensional drift, under the assumption that $A$ is skew-adjoint. This
will follow from the following lemma from linear algebra.

\begin{lemma}
\label{PA-op} Suppose $A \in HS$ is skew-adjoint, and $P$ is orthogonal
projection onto a subspace of $H$ with codimension 1. Then $\|PA\|_{op} =
\|AP\|_{op} = \|A\|_{op}$.
\end{lemma}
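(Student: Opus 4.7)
The plan is to use the spectral structure of a skew-adjoint Hilbert--Schmidt operator to exhibit a 2-dimensional invariant subspace on which $A$ acts as a rotation scaled by $\|A\|_{op}$, and then use a dimension count to find a unit vector whose image avoids the 1-dimensional kernel of $P$.

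First I would reduce the two equalities to one. Since $P$ is self-adjoint and $A^{\ast}=-A$, we have $\|AP\|_{op}=\|(AP)^{\ast}\|_{op}=\|PA^{\ast}\|_{op}=\|PA\|_{op}$, so it suffices to prove $\|PA\|_{op}=\|A\|_{op}$. The inequality $\|PA\|_{op}\le\|A\|_{op}$ is immediate from $\|P\|_{op}\le 1$; the content is the reverse inequality. Write $P=I-\langle\cdot,u\rangle_{H}u$ for a unit vector $u\in H$, so that $\|PAh\|_{H}^{2}=\|Ah\|_{H}^{2}-|\langle Ah,u\rangle_{H}|^{2}$ for any $h\in H$.

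Next I would exploit compactness. Because $A\in HS$, the operator $|A|=\sqrt{A^{\ast}A}=\sqrt{-A^{2}}$ is a nonnegative self-adjoint compact operator, so (assuming $A\neq 0$, else the claim is trivial) $\lambda:=\|A\|_{op}=\||A|\|_{op}$ is attained as an eigenvalue: there exists a unit $v\in H$ with $-A^{2}v=\lambda^{2}v$. Skew-adjointness gives $\langle Av,v\rangle_{H}=0$ and $\|Av\|_{H}^{2}=\langle -A^{2}v,v\rangle_{H}=\lambda^{2}$. Setting $w:=Av/\lambda$, the pair $(v,w)$ is an orthonormal basis of a 2-dimensional subspace $V\subset H$ which is $A$-invariant, and in this basis $A|_{V}=\left(\begin{smallmatrix}0 & -\lambda\\ \lambda & 0\end{smallmatrix}\right)$; in particular $A$ maps the unit circle of $V$ bijectively onto the circle of radius $\lambda$ in $V$.

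Finally I would choose the test vector. Let $u_{V}$ be the orthogonal projection of $u$ onto $V$. If $u_{V}=0$, any unit $h\in V$ yields $\langle Ah,u\rangle_{H}=\langle Ah,u_{V}\rangle_{H}=0$, so $\|PAh\|_{H}=\|Ah\|_{H}=\lambda$. If $u_{V}\neq 0$, then since the image $\{Ah:h\in V,\ \|h\|_{H}=1\}$ is a full circle in the 2-dimensional space $V$, I can pick a unit $h\in V$ with $Ah\perp u_{V}$ in $V$; then again $\langle Ah,u\rangle_{H}=0$ and $\|PAh\|_{H}=\lambda$. In either case $\|PA\|_{op}\ge\lambda=\|A\|_{op}$, completing the proof.

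The only subtlety I anticipate is the justification that $\lambda$ is actually attained as an eigenvalue of $|A|$ rather than merely lying in the spectrum; this is exactly where Hilbert--Schmidt (hence compactness of $|A|$) is used, and it is what reduces the problem to a clean finite-dimensional geometric argument on the invariant plane $V$.
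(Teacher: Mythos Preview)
Your proof is correct and follows essentially the same approach as the paper: both arguments use compactness to find a $2$-dimensional $A$-invariant subspace $V$ on which $\|Ah\|=\lambda\|h\|$, and then use a dimension count against the $1$-dimensional kernel of $P$ to produce the required test vector. The only cosmetic difference is that the paper bounds $\|AP\|_{op}$ from below by choosing a unit $w\in V$ orthogonal to $\ker P$ (so $Pw=w$), whereas you bound $\|PA\|_{op}$ by choosing a unit $h\in V$ with $Ah$ orthogonal to $\ker P$; these are equivalent via the invariance of $V$ under $A$.
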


\begin{proof}
The first equality is just the fact that $PA = -(AP)^{*}$. Moreover, the
inequality $\|AP\|_{op} \le\|A\|_{op}$ is obvious.

Since $\sqrt{A^{*} A}$ is compact, self-adjoint and nonnegative definite, it
has $\lambda:= \|\sqrt{A^{*} A}\|_{op} = \|A\|_{op}$ as an
eigenvalue. Let $u$ be a unit eigenvector of $\sqrt{A^{*} A}$ with eigenvalue
$\lambda$. Then $u$ is also an eigenvector of $A^{*} A = -A^{2}$ with
eigenvalue $\lambda^{2}$. Since $Au \ne0$, set $v = Au/\|Au\|$; then we have
$\langle u,v \rangle= 0$ and $-A^{2} v = \lambda^{2} v$.

Let $h_{0}$ be a unit vector in the kernel of $P$, so that $Ph = h - \langle
h, h_{0} \rangle h_{0}$. Now choose a unit vector $w \in\operatorname{span}%
\{u,v\}$ with $\langle w, h_{0} \rangle= 0$. (If $\langle u, h_{0} \rangle=
0$, take $w=u$; else take $w = \langle u, h_{0} \rangle v - \langle v, h_{0}
\rangle u$, appropriately rescaled.) Then $Pw=w$, so $\|APw\|^{2} = \|Aw\|^{2}
= \langle A^{*} A w, w \rangle= \lambda^{2}$. We thus have shown $\|AP\|_{op} 
\ge\lambda= \|A\|_{op}$.
\end{proof}

\begin{lemma}
\label{A-perturb-small-ball} Suppose $A \in HS$ is skew-adjoint. For any fixed
$h_{0} \in H$,
\[
\mathbb{P} \left(  \inf_{\gamma\in C([0,T];H)} \int_{0}^{T} \|A (B_{t} +
\gamma(t) h_{0})\|_{H}^{2}\,dt < \varepsilon\right)  \le K_{0} \exp\left(
-\frac{\|A\|_{op}^{2} T^{2} }{4 \varepsilon}\right),
\]
where $K_{0}$ is the constant from Theorem \ref{bt-small-ball}.
\end{lemma}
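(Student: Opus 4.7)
The idea is to kill the perturbation $\gamma(t)h_0$ by orthogonal projection onto a carefully chosen hyperplane of $H$, and then invoke the unperturbed small-ball bound of Lemma \ref{ABt-small-ball-exp}. Lemma \ref{PA-op} is precisely what ensures this projection does not degrade the operator norm of $A$, which is what makes the exponent $\|A\|_{op}^2 T^2/(4\varepsilon)$ survive.

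First I would dispose of the trivial case $Ah_0=0$: there $A(B_t+\gamma(t)h_0)=AB_t$ pointwise and the statement reduces immediately to Lemma \ref{ABt-small-ball-exp}. Assuming henceforth $Ah_0\ne 0$, I would let $P$ denote orthogonal projection onto the codimension-one subspace $\{Ah_0\}^{\perp}\subset H$, so that $PAh_0=0$. For any path $\gamma$ one then has the pointwise identity
\[
PA(B_t+\gamma(t)h_0)=PAB_t+\gamma(t)PAh_0=PAB_t,
\]
and hence the trivial inequality
\[
\|A(B_t+\gamma(t)h_0)\|_H^2\;\ge\;\|PA(B_t+\gamma(t)h_0)\|_H^2\;=\;\|PAB_t\|_H^2.
\]
Integrating over $[0,T]$ and then taking the infimum over $\gamma$ gives a \emph{deterministic} lower bound
\[
\inf_{\gamma}\int_0^T\|A(B_t+\gamma(t)h_0)\|_H^2\,dt\;\ge\;\int_0^T\|PAB_t\|_H^2\,dt.
\]

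At this point I would observe that $PA$ is Hilbert--Schmidt (since $A$ is and $P$ is bounded), so Lemma \ref{ABt-small-ball-exp} applies to $PA$ and yields
\[
\mathbb{P}\Bigl(\int_0^T\|PAB_t\|_H^2\,dt<\varepsilon\Bigr)\;\le\;K_0\exp\!\left(-\frac{\|PA\|_{op}^2\,T^2}{4\varepsilon}\right).
\]
The final step is to invoke Lemma \ref{PA-op}, which uses skew-adjointness of $A$ (this is the only place that hypothesis enters) to upgrade $\|PA\|_{op}$ to the full $\|A\|_{op}$; substituting and combining with the deterministic lower bound above gives exactly the stated inequality.

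The only substantive step is the choice of projection, and fortunately that is forced on us: we must project onto $\{Ah_0\}^{\perp}$ to erase the perturbation, and the codimension-one nature of this projection is precisely the hypothesis of Lemma \ref{PA-op}. Everything else is a routine assembly of previously established tools, so I do not anticipate a real obstacle.
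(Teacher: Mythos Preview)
Your proposal is correct and follows exactly the same route as the paper: project onto $\{Ah_0\}^{\perp}$ to remove the perturbation, invoke Lemma \ref{ABt-small-ball-exp} on $PA$, and then use Lemma \ref{PA-op} to recover $\|A\|_{op}$ in the exponent. There is nothing to add.
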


\begin{proof}
Let $Ph = h - \|Ah_{0}\|_H^{-1} \langle h, Ah_{0} \rangle h_{0}$ be orthogonal
projection onto the orthogonal complement of $\{A h_{0}\}$. (If $A h_{0} = 0$,
then take $P=I$.) For any $\gamma$, we have
\[
\|A (B_{t} + \gamma(t) h_{0})\|_H^{2} \ge\|PA (B_{t} + \gamma(t) h_{0})\|_H^{2} =
\|PA B_{t}\|_H^{2}.
\]
Thus by Lemma \ref{ABt-small-ball-exp} and Lemma \ref{PA-op}, we have
\begin{align*}
\mathbb{P} \left(  \inf_{\gamma\in C([0,T];H)} \int_{0}^{T} \|A (B_{t} +
\gamma(t) h_{0})\|_{H}^{2}\,dt < \varepsilon\right)   &  \le\mathbb{P} \left(
\int_{0}^{T} \|PA B_{t}\|_{H}^{2}\,dt < \varepsilon\right) \\
&  \le K_{0} \exp\left(  -\frac{\|PA\|_{op}^{2} T^{2} }{4 \varepsilon
}\right) \\
&  = K_{0} \exp\left(  -\frac{\|A\|_{op}^{2} T^{2} }{4 \varepsilon
}\right)  .
\end{align*}

\end{proof}

\subsection{Large deviations for $\|\rho_{T}\|_{op}$}
We will need the following large deviations result for Wiener chaos random
variables, which can be found in \cite[p.6]{ledoux-large-deviations-1990}
together with the relevant definitions, background, and further references.

\begin{theorem}
Let $(\tilde{W}, \tilde{H}, \tilde{\mu})$ be an abstract Wiener space, let $B$
be a real separable Banach space, and let $f \in L^{2}(\tilde{\mu}; B)$ be a
random variable that is in $\mathcal{H}^{(d)}(\tilde{\mu}; B)$, the $B$-valued
homogeneous Wiener chaos of degree $d$. Then
\[
\lim_{r \to\infty} \frac{1}{r^{2/d}} \log\tilde{\mu}\left(  x : \|f(x)\|_{B} >
r\right)  = -\frac{1}{2} \left(  \sup_{h \in\tilde{H}, \|h\|_{\tilde{H}} \le1}
\left\Vert \int_{\tilde{W}} f(x+h)\,\tilde{\mu}(dx) \right\Vert _{B} \right)
^{-2/d}.
\]

\end{theorem}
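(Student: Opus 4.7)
The plan is to establish the limit as matching upper and lower bounds on $\log\tilde{\mu}(\|f\|_B > r)$, with the key structural input being that if $f\in\mathcal{H}^{(d)}(\tilde{\mu};B)$, then
\[
\bar{f}:\tilde{H}\to B, \qquad \bar{f}(h) := \int_{\tilde{W}} f(x+h)\,\tilde{\mu}(dx)
\]
is a continuous homogeneous polynomial of degree $d$, so that $\bar{f}(sh) = s^d\bar{f}(h)$ for $s\in\mathbb{R}$. Write $M := \sup_{\|h\|_{\tilde{H}}\le 1}\|\bar{f}(h)\|_B$ for the variational quantity on the right-hand side of the theorem.

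For the lower bound, I would use Cameron--Martin translation. For any $h\in\tilde{H}$ with $\|h\|_{\tilde{H}}\le 1$ and any $s>0$,
\[
\tilde{\mu}(\|f\|_B>r) = \int_{\tilde{W}} \mathbf{1}_{\{\|f(x+sh)\|_B>r\}}\exp\!\left(-s\hat{h}(x) - \tfrac{1}{2}s^2\right)\tilde{\mu}(dx),
\]
where $\hat{h}$ is the $\tilde{\mu}$-measurable linear extension of $\langle h,\cdot\rangle_{\tilde{H}}$. Decomposing $f(x+sh)$ into its chaos components exhibits $f(x+sh) = s^d\bar{f}(h) + R_s(x)$, where $R_s$ is a sum of chaoses of order $1,\ldots,d-1$ satisfying $\|R_s\|_{L^2(\tilde{\mu};B)} = O(s^{d-1})$. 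Choosing $s := ((1+\delta)r/\|\bar{f}(h)\|_B)^{1/d}$ for fixed small $\delta>0$, Chebyshev gives $\tilde{\mu}(\|f(\cdot+sh)\|_B > r) \to 1$ as $r\to\infty$, while intersecting with $\{|\hat{h}(x)|\le s\}$ costs only $O(s)$ in the exponent. Taking logs, dividing by $r^{2/d}$, and sending $r\to\infty$ then $\delta\to 0^+$ yields $\liminf \ge -\tfrac{1}{2}\|\bar{f}(h)\|_B^{-2/d}$; optimizing over $h$ gives the lower bound $-\tfrac{1}{2}M^{-2/d}$.

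For the upper bound, my first pass is hypercontractivity: for chaos of degree $d$, $\|f\|_{L^p(\tilde{\mu};B)} \le (p-1)^{d/2}\|f\|_{L^2(\tilde{\mu};B)}$, and a Chebyshev-optimization argument (taking $p \asymp r^{2/d}$) yields $\tilde{\mu}(\|f\|_B > r) \le \exp(-c\, r^{2/d})$ for some $c>0$. However, the constant $c$ produced this way is not sharp. To obtain $\tfrac{1}{2}M^{-2/d}$, I would invoke the Borell--Sudakov--Tsirelson Gaussian isoperimetric inequality applied to $A_r := \{x : \|f(x)\|_B > r\}$: by the variational definition of $M$, the Cameron--Martin distance $\operatorname{dist}_{\tilde{H}}(0, A_r)$ is asymptotically at least $(r/M)^{1/d}$, and Borell's inequality then forces $\tilde{\mu}(A_r) \le \exp(-\tfrac{1}{2}(r/M)^{2/d} + o(r^{2/d}))$, which matches the lower bound.

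The main obstacle is matching the sharp constant on the upper-bound side; plain hypercontractivity yields only the correct functional form. Closing the gap requires the isoperimetric machinery above together with a careful geometric argument connecting $\operatorname{dist}_{\tilde{H}}(0, A_r)$ to the extremal problem defining $M$ (using separability of $B$ to ensure continuity of $\bar{f}$ and attainability of the sup along a maximizing sequence). Controlling the $B$-valued fluctuation $R_s$ in the lower bound is the secondary technical point, but it reduces to an $L^2(\tilde{\mu};B)$ bound via Chebyshev and the orthogonality of distinct chaos components.
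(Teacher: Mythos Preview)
The paper does not prove this theorem. It is quoted verbatim from Ledoux \cite{ledoux-large-deviations-1990}, with the remark that the result ``can be found in \cite[p.~6]{ledoux-large-deviations-1990} together with the relevant definitions, background, and further references.'' The paper then uses only the cruder consequence (\ref{chaos-large-dev}), which is the upper bound with a non-sharp constant obtained via Cameron--Martin and Cauchy--Schwarz; the exact limit is never invoked.

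That said, your outline is in the spirit of Ledoux's argument: lower bound by Cameron--Martin translation, upper bound by Gaussian isoperimetry. But your upper-bound sketch has a real gap. The claim that $\operatorname{dist}_{\tilde{H}}(0, A_r) \gtrsim (r/M)^{1/d}$ does not follow from the definition of $M$: the set $A_r$ lives in $\tilde{W}$, $f$ is only defined $\tilde{\mu}$-a.e., and there is no a priori pointwise relation between $f(h)$ and $\bar{f}(h)$ for $h\in\tilde{H}$ (indeed $\tilde{\mu}(\tilde{H})=0$). Moreover, Borell's inequality does not bound $\tilde{\mu}(A_r)$ by $\exp\bigl(-\tfrac{1}{2}\operatorname{dist}_{\tilde{H}}(0,A_r)^2\bigr)$; that would be a large-deviation statement for dilations, not isoperimetry. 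The actual mechanism in Ledoux is concentration about a median: one takes $A=\{\|f\|_B\le m\}$ with $\tilde{\mu}(A)\ge 1/2$, applies Borell to get $\tilde{\mu}(A+tB_{\tilde{H}})\ge \Phi(t)$, and then shows that on $A+tB_{\tilde{H}}$ the value of $\|f\|_B$ is controlled by $Mt^d$ plus lower-order terms. This last step uses exactly the chaos decomposition of $f(x+h)$ you wrote down for the lower bound, together with hypercontractive moment bounds on the lower-order remainders, and is where the constant $M$ enters on the upper side. Your ingredients are all on the table, but they need to be assembled through the median-concentration route rather than a distance-from-the-origin argument.
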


In particular, as in \cite[p.3]{ledoux-large-deviations-1990}, by the
Cameron--Martin theorem and the Cauchy--Schwarz inequality we have the bound
\begin{equation}
\label{chaos-large-dev}\limsup_{r \to\infty} \frac{1}{r^{2/d}} \log\tilde{\mu
}\left(  x : \|f(x)\|_{B} > r\right)  \le-\frac{e^{-1/d}}{2} \left(
\int_{\tilde{W}} \|f(x)\|_{B}^{2} \,\tilde{\mu}(dx) \right)  ^{-2/d}.
\end{equation}

This bound can be applied to $\rho_{T}$, as the following lemma shows.

\begin{lemma}
\label{rhoT-large-dev} There is a constant $k > 0$, depending only on $\omega
$, such that
\[
\limsup_{r \to\infty} \frac{1}{r} \log\mathbb{P}(\|\rho_{T}\|_{op} > r) \le-\frac
{k}{T^{2}},
\]
where $\|\cdot\|_{op}$ is the operator norm on $\mathrm{End}(C)$ as in Notation \ref{End-notation}.
In particular, for any $k_{1} < k$ there is a $K_{1} > 0$ (depending on
$k_{1}$ and $\omega$) such that
\[
\mathbb{P}(\|\rho_{T}\|_{op} > r) \le K_{1} \exp\left(  -\frac{k_{1} r}{T^{2}}
\right)  .
\]

\end{lemma}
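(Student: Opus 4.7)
The plan is to apply the Wiener-chaos large deviation bound \eqref{chaos-large-dev} to $\rho_T$ viewed as an $\End(C)$-valued element of the second homogeneous Wiener chaos, and then use the scaling relation \eqref{rhoT-scale} to extract the $T^{-2}$ factor. By \eqref{rhoT-scale}, $\|\rho_T\|_{op} \overset{d}{=} T^2\|\rho_1\|_{op}$, so $\mathbb{P}(\|\rho_T\|_{op} > r) = \mathbb{P}(\|\rho_1\|_{op} > r/T^2)$, and setting $u = r/T^2$ gives
\[
\limsup_{r \to \infty} \frac{1}{r} \log \mathbb{P}(\|\rho_T\|_{op} > r) = \frac{1}{T^2} \limsup_{u \to \infty} \frac{1}{u} \log \mathbb{P}(\|\rho_1\|_{op} > u).
\]
Thus it suffices to produce $k > 0$, depending only on $\omega$, such that the right-hand limsup is at most $-k$; the quantitative bound with any $k_1 < k$ and a $T$-independent prefactor $K_1$ then follows from this asymptotic together with the trivial estimate $\mathbb{P} \le 1$ on a bounded initial range of $u$, again via the identity $\mathbb{P}(\|\rho_T\|_{op} > r) = \mathbb{P}(\|\rho_1\|_{op} > r/T^2)$.

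To produce $k$, regard the path $\{B_t\}_{t \in [0,1]}$ as a random element of the path-space Wiener space $\widetilde W = C_0([0,1]; W)$ (with Cameron--Martin space $\widetilde H$ consisting of absolutely continuous $h \colon [0,1] \to H$ with $h(0) = 0$ and $\int_0^1 \|\dot h(s)\|_H^2\, ds < \infty$, and law $\widetilde\mu$). Then $\rho_1$ is a continuous quadratic functional from $\widetilde W$ to $\End(C)$: using the finite-rank projections $\{P_n\}$ of Corollary \ref{HS0-dense}, each approximation $\rho_1(P_n B)$ is a polynomial of degree at most two in finitely many independent Gaussian coordinates, hence after centering lies in the $\End(C)$-valued second homogeneous chaos $\mathcal{H}^{(2)}(\widetilde\mu; \End(C))$; Proposition \ref{ABt-bounded} together with a similar argument for the quadratic integral gives $\rho_1(P_n B) \to \rho_1$ in $L^2(\widetilde\mu; \End(C))$, and closedness of $\mathcal{H}^{(2)}$ then yields $\bar\rho_1 := \rho_1 - \mathbb{E}[\rho_1] \in \mathcal{H}^{(2)}(\widetilde\mu; \End(C))$. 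Since $\rho_1 \in \End_+(C)$ and $\dim C = d$, the bound $\|\rho_1\|_{op} \le \operatorname{tr}(\rho_1) = \tfrac14 \sum_{i=1}^d \int_0^1 \|\Omega_{e_i} B_s\|_H^2 \, ds$ (for any orthonormal basis $\{e_i\}$ of $C$), combined with Cauchy--Schwarz in time and Gaussian fourth-moment estimates controlling $\mathbb{E}\|\Omega_{e_i} B_s\|_H^4$ by a constant times $\|\Omega_{e_i}\|_{HS}^4$, gives $\mathbb{E}\|\rho_1\|_{op}^2 \le C(\omega) < \infty$ and hence $\mathbb{E}\|\bar\rho_1\|_{op}^2 \le C'(\omega) < \infty$.

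Applying \eqref{chaos-large-dev} with $d = 2$ and Banach target $\End(C)$ to $\bar\rho_1$ now yields
\[
\limsup_{u \to \infty} \frac{1}{u} \log \mathbb{P}(\|\bar\rho_1\|_{op} > u) \le -\frac{e^{-1/2}}{2\, \mathbb{E}\|\bar\rho_1\|_{op}^2} =: -k < 0,
\]
with $k$ depending only on $\omega$; the triangle bound $\|\rho_1\|_{op} \le \|\bar\rho_1\|_{op} + \|\mathbb{E}\rho_1\|_{op}$ transfers this asymptotic to $\|\rho_1\|_{op}$, completing the proof in view of the first paragraph. The main obstacle is the chaos-membership step: although each scalar matrix entry $\lambda\cdot\rho_1\lambda'$ is plainly a quadratic Brownian functional, identifying $\bar\rho_1$ itself as an element of the $\End(C)$-valued second homogeneous chaos under the path-space Wiener measure requires some care with the finite-rank approximations and with $L^2$-convergence in the Banach-valued sense. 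Everything else is either a routine moment estimate or a direct application of the cited large-deviation bound.
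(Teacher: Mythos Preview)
Your proof is correct and follows essentially the same route as the paper: reduce to $T=1$ via the scaling relation \eqref{rhoT-scale}, identify $\rho_1 - \mathbb{E}\rho_1$ as a second-chaos element on the path-space abstract Wiener space, apply the bound \eqref{chaos-large-dev} with $d=2$, and absorb the constant shift $\mathbb{E}\rho_1$. You supply somewhat more detail than the paper does on the chaos-membership step and the finiteness of $\mathbb{E}\|\rho_1\|_{op}^2$, but the structure and the resulting constant $k = \tfrac{e^{-1/2}}{2}(\mathbb{E}\|\rho_1 - \mathbb{E}\rho_1\|_{op}^2)^{-1}$ are identical.
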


\begin{proof}
By (\ref{rhoT-scale}), we have $\rho_{T}\overset{d}{=}T^{2}\rho_{1}$ in law,
so it suffices to consider $T=1$. We will write $\rho$ for $\rho_{1}$.

Take $\tilde{W} = C([0,1]; W)$ to be the path space over $W$, and $\tilde{\mu
}$ to be the law of Brownian motion $\{B_{t}\}$ on $W$, which is a Gaussian
measure on $\tilde{W}$. Then $(\tilde{W}, \tilde{H}, \tilde{\mu})$ is an
abstract Wiener space, where $\tilde{H}$ is the space of finite-energy
$H$-valued paths in $\tilde{W}$. If we take $E = \operatorname*{End}(C)$ with
the operator norm $\|\cdot\|_{op}$, then we can
consider $\rho$ as an $E$-valued random variable on $\tilde{W}$. It is not hard
to show that $\rho- \mathbb{E} \rho\in\mathcal{H}^{(2)}(\tilde{W}; E)$. So
applying (\ref{chaos-large-dev}) with $d=2$ and adjusting notation, we obtain
\[
\limsup_{r \to\infty} \frac{1}{r} \log\mathbb{P}\left(  \left\Vert \rho-
\mathbb{E} \rho\right\Vert_{op} > r \right)  \le-\frac{e^{-1/2}}{2} \left(
\mathbb{E} \left\Vert \rho- \mathbb{E} \rho\right\Vert_{op} ^{2} \right)  ^{-1}.
\]
We can drop the constant $\mathbb{E} \rho$ from the left side without changing
the limit. Setting
\[
k := \frac{e^{-1/2}}{2} \left(  \mathbb{E} \left\Vert \rho- \mathbb{E}
\rho\right\Vert_{op} ^{2} \right)  ^{-1}%
\]
which only depends on $\omega$, we have the conclusion.
\end{proof}

\subsection{Estimates on $\rho_{T}$}

\begin{notation}
\label{bf-h-notation} Given $h \in H$, let $\mathbf{h}(t) = \frac{t}{T} h$, so
that $\mathbf{h}$ is a finite-energy path in $H \subset W$ with $\mathbf{h}(0)
= 0$ and $\mathbf{h}(T) = h$.
\end{notation}

\begin{lemma}
\label{rhoT-estimate} Fix $T > 0$, $\alpha_{0} \ge0$, and $h \in H$. There are
constants $K, k$, depending on $\omega$, $\alpha_{0}$, $\|h\|_{H}$, and $T$,
so that
\[
\mathbb{P}\left(  \sup_{|\alpha| \le\alpha_{0}} \|\rho_{T}(B + \alpha
\mathbf{h})^{-1}\|_{op} > r\right)  \le K e^{-kr}.
\]

\end{lemma}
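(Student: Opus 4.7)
The plan is to exploit the representation $\|\rho_T(\mathbf{x})^{-1}\|_{op} = (\inf_{|\lambda|_C=1}\rho_T(\mathbf{x})\lambda\cdot\lambda)^{-1}$, valid because $\rho_T(\mathbf{x}) \in \operatorname*{End}_+(C)$, and reduce the tail of $\|\rho_T(B+\alpha\mathbf{h})^{-1}\|_{op}$ to a small-ball estimate for a quadratic functional of $B$. Writing $\mathbf{x}_\alpha := B + \alpha\mathbf{h}$ and unwinding the definition, the event in question becomes: there exist $|\alpha|\le \alpha_0$ and a unit $\lambda\in C$ with
\[
\tfrac14\int_0^T \|\Omega_\lambda(B_s + (\alpha s/T) h)\|_H^2\, ds < 1/r.
\]
Since $s \mapsto \alpha s/T$ is a scalar drift, for each \emph{fixed} $\lambda$ this event is contained in the infimum over all scalar drifts handled by Lemma \ref{A-perturb-small-ball} (applied with $A = \Omega_\lambda$, which is skew-adjoint because $\omega$ is skew-symmetric).

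Two further inputs are needed. First, a uniform positive lower bound $c_0 := \inf_{|\lambda|_C=1} \|\Omega_\lambda\|_{op} > 0$: the surjectivity of $\omega$ forces $\Omega_\lambda \ne 0$ for $\lambda \ne 0$ (if $\omega(h',k') = \lambda$, then $\langle\Omega_\lambda h',k'\rangle_H = |\lambda|_C^2$), and continuity of $\lambda\mapsto\|\Omega_\lambda\|_{op}$ together with compactness of the unit sphere in the finite-dimensional space $C$ delivers such a $c_0$. This yields, for each fixed unit $\lambda$ and each $\varepsilon > 0$,
\[
\mathbb{P}\left(\inf_{|\alpha|\le\alpha_0}\int_0^T \|\Omega_\lambda \mathbf{x}_\alpha(s)\|_H^2\,ds < \varepsilon\right)\ \le\ K_0\, e^{-c_0^2 T^2/(4\varepsilon)}.
\]

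To upgrade this pointwise estimate to the uniform infimum over $\{|\lambda|=1\}$, I would cover the unit sphere of $C$ by a $\delta$-net $\{\lambda_j\}_{j=1}^N$ with $N \lesssim \delta^{-(d-1)}$, and use the Lipschitz estimate
\[
\bigl|\rho_T(\mathbf{x})\lambda\cdot\lambda - \rho_T(\mathbf{x})\lambda_j\cdot\lambda_j\bigr|\ \le\ 2\delta\,\|\rho_T(\mathbf{x})\|_{op}.
\]
To control $M:=\sup_{|\alpha|\le\alpha_0}\|\rho_T(\mathbf{x}_\alpha)\|_{op}$, the elementary bound $(a+b)^2\le 2a^2+2b^2$ applied to the integrand gives $M \le 2\|\rho_T(B)\|_{op} + C_2$ for some constant $C_2 = C_2(\alpha_0, \|h\|_H, T, \omega)$, so Lemma \ref{rhoT-large-dev} yields $\mathbb{P}(M > cr) \le K_1 e^{-k_1' r}$ for suitable $c, K_1, k_1'$. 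Splitting on $\{M \le cr\}$ and choosing $\delta := 1/(2cr^2)$ forces $2\delta M \le 1/r$, so on this event the failure of the inequality $\|\rho_T(\mathbf{x}_\alpha)^{-1}\|_{op}\le r$ forces $\rho_T(\mathbf{x}_\alpha)\lambda_j\cdot\lambda_j < 2/r$ for some $\alpha$ and some $j$. A union bound over the $N \lesssim r^{2(d-1)}$ net centers, applying the pointwise estimate with $\varepsilon = 8/r$, combines with the truncation bound to give the target $K e^{-kr}$; the polynomial-in-$r$ prefactor is absorbed into a slightly smaller exponential rate.

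The main obstacle is passing from a fixed-$\lambda$ small-ball bound to an estimate uniform over the continuum $\{|\lambda|_C=1\}$: a naive union bound is impossible. The $\delta$-net works only in conjunction with an independent tail bound on $\|\rho_T\|_{op}$, which is precisely the role of Lemma \ref{rhoT-large-dev}; the truncation level, the covering radius, and the small-ball rate must all balance linearly in $r$ to deliver the advertised exponential decay.
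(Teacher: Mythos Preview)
Your proposal is correct and follows essentially the same approach as the paper: a $\delta$-net on the unit sphere of $C$ (with $\delta \asymp r^{-2}$), the Lipschitz bound $|\rho_T\lambda\cdot\lambda - \rho_T\lambda_j\cdot\lambda_j| \le 2\delta\|\rho_T\|_{op}$, Lemma~\ref{rhoT-large-dev} to control the tail of $\sup_{|\alpha|\le\alpha_0}\|\rho_T(B+\alpha\mathbf{h})\|_{op}$, and Lemma~\ref{A-perturb-small-ball} together with the surjectivity-driven lower bound $\inf_{|\lambda|=1}\|\Omega_\lambda\|_{op}>0$ for the small-ball term at each net point. The only (inessential) difference is that you bound $\|\rho_T(B+\alpha\mathbf{h})\|_{op}$ via $(a+b)^2\le 2a^2+2b^2$, giving $2\|\rho_T(B)\|_{op}$ plus a deterministic constant, whereas the paper expands bilinearly and handles the Gaussian cross term $\rho_T(B,\mathbf{h})$ separately with Fernique's theorem; your route is marginally simpler and yields the same exponential tail.
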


\begin{proof}
Let $S$ denote the unit sphere of $C$. For an arbitrary $\delta>0$, we may
cover $S$ with a finite number $n$ of balls of radius at most $\delta$; let
$\{\lambda_{i}\}_{i=1}^{n}$ be their centers. We can choose $n\leq
M\delta^{-d}$, where $d=\dim C$ and $M$ is some universal
constant.\footnote{This crude bound is easily proved when the $\ell^{2}$ norm
on $C$ is replaced by an $\ell^{\infty}$ norm. In this case balls are cubes
and it is just a question of dividing the unit cube into order $\left(
1/\delta\right)  ^{\dim C}$ sub-cubes. Since the $\ell^{2}$ and $\ell^{\infty
}$ norms are equivalent in finite-dimensions it follows that the same
\textquotedblleft entropy\textquotedblright\ estimates hold for round balls.}

For any $Q \in\operatorname*{End}_{+}(C)$ and any $\lambda\in C$, we can
choose a $\lambda_{i}$ with $|\lambda- \lambda_{i}| < \delta$. Then the mean
value theorem gives us $|Q\lambda\cdot\lambda-Q\lambda_{i}\cdot\lambda_{i}|
\leq2\|Q\|_{op}\delta$. Thus
\[
\|Q^{-1}\|_{op}^{-1}=\min_{\lambda\in S}Q\lambda\cdot\lambda\geq\min_{i}Q\lambda
_{i}\cdot\lambda_{i}-2\delta\|Q\|_{op}.
\]
Thus, for any $r > 0$, we have
\begin{align*}
\mathbb{P}  &  \left(  \sup_{|\alpha|\leq\alpha_{0}}\|\rho_{T}(B+\alpha
\mathbf{h})^{-1}\|_{op}>r\right)  =\mathbb{P}\left(  \inf_{|\alpha|\leq\alpha_{0}%
}\inf_{\lambda\in S}\rho_{T}(B+\alpha\mathbf{h})\lambda\cdot\lambda
<r^{-1}\right) \\
&  \leq\mathbb{P}\left(  \inf_{|\alpha|\leq\alpha_{0}}\min_{i}\rho_{T}
(B+\alpha\mathbf{h})\lambda_{i}\cdot\lambda_{i}-2\delta\sup_{|\alpha
|\leq\alpha_{0}}\|\rho_{T}(B+\alpha\mathbf{h})\|_{op}<r^{-1}\right) \\
&  \leq\mathbb{P}\left(  \inf_{|\alpha|\leq\alpha_{0}}\min_{i}\rho_{T}
(B+\alpha\mathbf{h})\lambda_{i}\cdot\lambda_{i}<2r^{-1}\right)  +\mathbb{P}%
\left(  2\delta\sup_{|\alpha|\leq\alpha_{0}}
\|\rho_{T}(B+\alpha\mathbf{h}%
)\|_{op}>r^{-1}\right) \\
&  =:P_{1}+P_{2}.
\end{align*}

To analyze the second term $P_{2}$, let us choose $\delta= 1/r^{2}$, so we
have
\[
P_{2} = P_{2}(r) = \mathbb{P}\left(  2 \sup_{|\alpha|\leq\alpha_{0}}\|\rho
_{T}(B+\alpha\mathbf{h})\|_{op} > r \right)  .
\]
We note that for $|\alpha|\leq\alpha_{0}$, we have
\[
\|\rho_{T}(B+\alpha\mathbf{h})\|_{op}\leq\|\rho_{T}(B)\|_{op}+2\alpha_{0}\|\rho
_{T}(B,\mathbf{h})\|_{op}+\alpha_{0}^{2}\|\rho_{T}(\mathbf{h})\|_{op}.
\]
so for any $\varepsilon> 0$,
\begin{align*}
P_{2}(r)  &  \le\mathbb{P}(2\|\rho_{T}(B)\|_{op} > r/3) + \mathbb{P}(4 \alpha_{0}
\|\rho_{T}(B, \mathbf{h})\|_{op} > r/3) + \mathbb{P} (2 \alpha_{0}^{2} \|\rho
_{T}(\mathbf{h})\|_{op} > r/3)\\
&  =: P_{2,1}(r) + P_{2,2}(r) + P_{2,3}(r).
\end{align*}
Now $P_{2,1}(r)$ is controlled by Lemma \ref{rhoT-large-dev}. For $P_{2,2}$,
we note that $\rho_{T}(B, \mathbf{h})$ is linear in $B$ and hence Gaussian.
Thus by Fernique's theorem \cite{fernique70} (see also \cite[Theorem
3.1]{Kuo75}), we have $P_{2,2}(r) \le K^{\prime}e^{-k^{\prime}r^{2}}$ for some
$K^{\prime}, k^{\prime}$. And since $\rho_{T}(\mathbf{h})$ is deterministic,
$P_{2,3}(r)$ vanishes for all sufficiently large $r$. Thus we have $P_{2}(r)
\le K e^{-kr}$ for suitable $K, k$.

Next we estimate $P_{1}(r) = \mathbb{P}\left(  \inf_{|\alpha|\leq\alpha_{0}%
}\min_{i}\rho_{T}(B+\alpha\mathbf{h})\lambda_{i}\cdot\lambda_{i}%
<2r^{-1}\right)  $. Here $i$ ranges from $1$ to $n = n(r) \le M r^{2d}$, since
we have chosen $\delta= r^{-2}$. Thus by a union bound we have
\begin{align*}
P_{1}(r)  &  \le\sum_{i=1}^{n(r)} \mathbb{P} \left(  \inf_{|\alpha|\leq
\alpha_{0}}\rho_{T}(B+\alpha\mathbf{h})\lambda_{i}\cdot\lambda_{i}%
<2r^{-1}\right) \\
&  \le n(r) \max_{i} \mathbb{P} \left(  \inf_{|\alpha|\leq\alpha_{0}}\rho
_{T}(B+\alpha\mathbf{h})\lambda_{i}\cdot\lambda_{i}<2r^{-1}\right) \\
&  \le M r^{2d} \max_{i} K_{0} \exp\left(  -\frac{r \|\Omega_{\lambda_{i}%
}\|_{op} }{2} \right)
\end{align*}
applying Lemma \ref{A-perturb-small-ball} with $A = \Omega_{\lambda_{i}}$,
$h_{0} = h$, $\gamma(t) = \alpha\frac{t}{T}$, and $\varepsilon= 2r^{-1}$.

Now we note that since $\omega$ is assumed to be surjective, we have
$\Omega_{\lambda}\ne0$ for every $\lambda\ne0$. Then since $S$ is compact and
the map $C \ni\lambda\to\Omega_{\lambda} \in B(H)$ is continuous, we have
$\inf_{\lambda\in S} \|\Omega_{\lambda}\|_{op} > 0$. Thus we have
\[
P_{1}(r) \le K e^{-k r}%
\]
for some (new) constants $K, k$, which have been adjusted so as to absorb the
polynomial factor $M r^{2d}$.

Combining the estimates on $P_{1}(r), P_{2}(r)$ gives the result.
\end{proof}

The previous results and proofs give the following immediate corollary.

\begin{corollary}
\label{h.the.5.7}We have

\begin{enumerate}
\item $\sup_{|\alpha|\leq\alpha_{0}}\|\rho_{T}(B+\alpha\mathbf{h})^{-1}\|_{op}\in
L^{\infty-}(\mathbb{P})$. (Here $L^{\infty-} := \bigcap_{1 \le p < \infty}
L^{p}$.)

\item $(\det\rho_{T})^{-1}=\det(\rho_{T}^{-1})\leq\|\rho_{T}^{-1}\|_{op}^{d}\in
L^{\infty-}$.

\item \label{i.3} Almost surely, $\rho_{T}$ is invertible and hence strictly
positive definite.
\end{enumerate}
\end{corollary}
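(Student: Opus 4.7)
The plan is to read off each of the three statements as essentially immediate consequences of the exponential tail bound
\[
\mathbb{P}\Bigl(\sup_{|\alpha|\le\alpha_0}\|\rho_T(B+\alpha\mathbf{h})^{-1}\|_{op} > r\Bigr) \le K e^{-kr}
\]
established in Lemma \ref{rhoT-estimate}; the technical work has already been done there, and this corollary amounts to bookkeeping.

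For part (1), let $X := \sup_{|\alpha|\le\alpha_0}\|\rho_T(B+\alpha\mathbf{h})^{-1}\|_{op}$. I would apply the layer-cake formula
\[
\mathbb{E}[X^p] = p\int_0^\infty r^{p-1}\,\mathbb{P}(X>r)\,dr
\]
and use Lemma \ref{rhoT-estimate} to bound the tail by $Ke^{-kr}$ for $r$ large (integrability for small $r$ is trivial). Since $\int_0^\infty r^{p-1} e^{-kr}\,dr < \infty$ for every $p\ge 1$, this yields $X\in L^{\infty-}(\mathbb{P})$.

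For part (2), I would invoke the elementary linear-algebra bound: for any self-adjoint positive semidefinite $Q$ on the $d$-dimensional space $C$, with eigenvalues $0\le\lambda_1\le\cdots\le\lambda_d = \|Q\|_{op}$, one has $\det Q = \prod_i \lambda_i \le \|Q\|_{op}^d$. Applied to $Q = \rho_T^{-1}$, this gives the inequality $(\det\rho_T)^{-1} = \det(\rho_T^{-1}) \le \|\rho_T^{-1}\|_{op}^d$. Taking $\alpha_0 = 0$ and $h = 0$ in part (1) gives $\|\rho_T^{-1}\|_{op} \in L^{\infty-}$, and since $Y\in L^{\infty-}$ implies $Y^d\in L^{\infty-}$, we conclude $(\det\rho_T)^{-1}\in L^{\infty-}$.

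For part (3), the point is that Lemma \ref{rhoT-estimate} already implicitly handles the possibility of singularity if we adopt the convention $\|\rho_T^{-1}\|_{op} = +\infty$ whenever $\rho_T$ is not invertible. Under that convention, the event $\{\rho_T \text{ is singular}\}$ is contained in $\{\|\rho_T^{-1}\|_{op} > r\}$ for every finite $r$, and hence
\[
\mathbb{P}(\rho_T \text{ singular}) \le \lim_{r\to\infty} K e^{-kr} = 0.
\]
Since $\rho_T \in \End_+(C)$ by definition (Notation \ref{rhoT-def}), invertibility is equivalent to strict positive definiteness, so we conclude $\rho_T$ is almost surely strictly positive definite. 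There is no real obstacle here; the only mild subtlety is the convention identifying singularity with $\|\rho_T^{-1}\|_{op} = +\infty$, which is already how Lemma \ref{rhoT-estimate} is effectively being used.
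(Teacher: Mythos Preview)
Your proposal is correct and matches the paper's intent: the paper simply states that the corollary follows from ``the previous results and proofs,'' and what you have written is precisely the routine unpacking of Lemma \ref{rhoT-estimate} via the layer-cake formula, the elementary eigenvalue bound $\det(\rho_T^{-1})\le\|\rho_T^{-1}\|_{op}^d$, and the convention $\|\rho_T^{-1}\|_{op}=+\infty$ on the singular set (which is indeed how the proof of Lemma \ref{rhoT-estimate} is implicitly set up, since it works with $\min_{\lambda\in S}Q\lambda\cdot\lambda$).
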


This was the missing piece in the proof of Theorem \ref{J0-formula} and its corollaries.

\begin{remark}
\label{h.the.5.8}There is a simpler argument to see $\rho_{T}$ is invertible
almost surely when $\omega$ is continuous on $W$. Increments of a $W$-valued
Brownian motion contain, after scaling, an i.i.d. sequence distributed
according to the Gaussian measure $\mu$, which by assumption has full support.
Hence the image of the Brownian motion over times $[0,T]$ is total, almost
surely, so it cannot live in any proper closed subspace of $W$, such as the
kernel of any nonzero continuous operator. We would like to thank George
Lowther \cite{mo-hyperplanes} and Clinton Conley for suggesting this argument.
\end{remark}

\section{A Fernique-type theorem}

\label{fernique-sec}

As an application of the formula in Theorem \ref{J0-formula}, we give a proof
of a Fernique-type theorem (compare \cite{fernique70}) giving
square-exponential integrability for the hypoelliptic heat kernel measure.
This result was previously obtained in \cite[Theorem 4.16]{A35} via
finite-dimensional projections. (Note that \cite[Theorem 4.16]{A35} actually
handles an elliptic heat kernel measure, but the same proof works in the
hypoelliptic case by simply omitting the $B_{0}$ term.)

\begin{proposition}
[Fernique-type theorem]\label{fernique-theorem} There exists $\varepsilon>0$
sufficiently small that, for any $T>0$,
\[
\int_{G} e^{\frac{\varepsilon}{T}(\|x\|_{W}^{2} + |c|_{C})}\,\nu_{T}(dx,dc) =
\mathbb{E}\left[  e^{\frac{\varepsilon}{T}(\Vert B_{T}\Vert_{W}^{2}%
+|Z_{T}|_{C})}\right]  <\infty.
\]

\end{proposition}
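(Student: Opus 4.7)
The plan is to exploit the explicit formula from Theorem~\ref{J0-formula}, which writes $\nu_T$ as a Gaussian-in-$c$ density against Wiener measure, combined with the exponential tail estimate on $\|\rho_T\|_{op}$ from Lemma~\ref{rhoT-large-dev} and the classical Fernique theorem for the Gaussian measure $\mu$ on $W$.

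First I would reduce to $T=1$ using scaling. The rescaled process $\tilde{B}_s := T^{-1/2}B_{sT}$ is a standard $W$-valued Brownian motion, and a change of variables in the integrals defining $Z_T$ and $\rho_T$ gives $B_T = \sqrt{T}\,\tilde{B}_1$, $Z_T \stackrel{d}{=} T\,\tilde{Z}_1$ (this is (\ref{gt-scale})), and $\rho_T(B) \stackrel{d}{=} T^2 \rho_1(\tilde{B})$, all jointly. Therefore
\[
\tfrac{\varepsilon}{T}\bigl(\|B_T\|_W^2 + |Z_T|_C\bigr)\;\stackrel{d}{=}\;\varepsilon\bigl(\|\tilde{B}_1\|_W^2 + |\tilde{Z}_1|_C\bigr),
\]
so it is enough to find $\varepsilon>0$ with $\mathbb{E}[e^{\varepsilon(\|B_1\|_W^2 + |Z_1|_C)}] < \infty$.

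Next, applying Theorem~\ref{J0-formula} to the function $F(x,c)=e^{\varepsilon(\|x\|_W^2+|c|_C)}$ (truncating and passing to the monotone limit to legitimize the use of an unbounded $F$) yields
\[
\mathbb{E}\bigl[e^{\varepsilon(\|B_1\|_W^2+|Z_1|_C)}\bigr] \;=\; \mathbb{E}\!\left[e^{\varepsilon\|B_1\|_W^2}\!\int_C e^{\varepsilon|c|_C}\,J_1^0(B,c)\,dc\right].
\]
For $\mathbb{P}$-a.e.\ $B$, by Corollary~\ref{h.the.5.7} the measure $J_1^0(B,c)\,dc$ is the centered Gaussian on $C$ with covariance $\rho_1(B)$. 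Realizing this Gaussian as the law of $\rho_1^{1/2}Y$ for $Y\sim N(0,I_C)$ and using $|\rho_1^{1/2}Y|_C \le \|\rho_1\|_{op}^{1/2}|Y|$ together with the elementary bound $\varepsilon\|\rho_1\|_{op}^{1/2}|Y| \le \varepsilon^2\|\rho_1\|_{op}+\tfrac{1}{4}|Y|^2$ gives
\[
\int_C e^{\varepsilon|c|_C}\,J_1^0(B,c)\,dc \;\le\; K_d\,\exp\!\bigl(\varepsilon^2\|\rho_1(B)\|_{op}\bigr)
\]
for a dimensional constant $K_d$ depending only on $d=\dim C$.

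Substituting this back into the previous identity and applying Cauchy--Schwarz reduces the problem to showing that $\mathbb{E}[e^{2\varepsilon\|B_1\|_W^2}]$ and $\mathbb{E}[e^{2\varepsilon^2 \|\rho_1\|_{op}}]$ are both finite for $\varepsilon$ small. The first is the classical Fernique theorem applied to the Gaussian measure $\mu$ on $W$. The second follows from Lemma~\ref{rhoT-large-dev}, which gives $\mathbb{P}(\|\rho_1\|_{op}>r)\le K_1 e^{-k_1 r}$, so $\|\rho_1\|_{op}$ has a finite exponential moment of any order up to $k_1$; taking $\varepsilon$ small enough that $2\varepsilon^2 < k_1$ (and the Fernique bound holds) completes the argument. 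The only slightly delicate point—and what I expect to be the main obstacle—is arranging the Gaussian moment estimate so that it depends on $\rho_1$ only through its operator norm $\|\rho_1\|_{op}$, since that is the sole scalar quantity known a priori to have exponential tails; the bound via $|\rho_1^{1/2}Y|_C\le\|\rho_1\|_{op}^{1/2}|Y|$ is precisely what makes this work.
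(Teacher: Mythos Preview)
Your proposal is correct and follows essentially the same route as the paper: reduce to $T=1$ by scaling, invoke Theorem~\ref{J0-formula}, bound the Gaussian-in-$c$ integral by something exponential in $\|\rho_1\|_{op}$ via the substitution $c\mapsto\rho_1^{1/2}c$ and the inequality $|\rho_1^{1/2}c|\le\|\rho_1\|_{op}^{1/2}|c|$, then split with Cauchy--Schwarz and finish using the classical Fernique theorem together with the exponential tail of $\|\rho_1\|_{op}$ from Lemma~\ref{rhoT-large-dev}. The only cosmetic difference is that you handle the one-dimensional Gaussian moment via Young's inequality $\varepsilon\|\rho_1\|_{op}^{1/2}|Y|\le\varepsilon^2\|\rho_1\|_{op}+\tfrac14|Y|^2$, whereas the paper writes the integral in polar coordinates and splits at $r=4\varepsilon\|\rho\|_{op}^{1/2}$; both yield a bound of the form $K_d\,e^{C\varepsilon^2\|\rho_1\|_{op}}$ and the remainder of the argument is identical.
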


\begin{proof}
By the scaling relation (\ref{gt-scale}), we see that it is sufficient to show
the result when $T=1$; then the same $\varepsilon$ will work for every other
$T>0$. As before, we write $\rho$ for $\rho_{1}$.

From Theorem \ref{J0-formula}, we have
\begin{align*}
\mathbb{E}\left[  e^{\varepsilon(\|B_{1}\|_{W}^{2} + |Z_{1}|_{C})}\right]   &
= \mathbb{E} \left[  e^{\varepsilon\|B_{1}\|_{W}^{2}} \int_{C} \frac{
\exp\left(  \varepsilon|c| -\frac{1}{2} \rho^{-1} c \cdot c\right)  }{\sqrt{
\det(2\pi\rho)}}\,dc \right] \\
&  \le\mathbb{E} \left[  e^{\varepsilon\|B_{1}\|_{W}^{2}} \int_{C} \frac
{\exp\left(  \varepsilon\sqrt{\|\rho\|_{op}} |c| -\frac{1}{2} |c|^{2}\right)  }%
{\sqrt{(2\pi)^{d}}}\,dc \right]
\end{align*}
where we made the change of variables $c \to\rho^{1/2} c$ and used the
inequality $|\rho^{1/2} c| \le\sqrt{\|\rho\|_{op}} |c|$. Now letting $a =
\varepsilon\sqrt{\|\rho\|_{op}}$ and writing the $dc$ integral in polar coordinates
gives us the bound
\begin{align*}
\int_{C} \exp\left(  a |c| -\frac{1}{2} |c|^{2}\right)  \,dc  &  =
\omega_{d-1} \int_{0}^{\infty}e^{a r - \frac{1}{2} r^{2}} r^{d-1}\,dr\\
&  \le\omega_{d-1} \left(  \int_{0}^{4 a} e^{ar -\frac{1}{2} r^{2}}
r^{d-1}\,dr + \int_{4a}^{\infty}e^{-\frac{1}{4} r^{2}} r^{d-1}\,dr \right) \\
&  \le\omega_{d-1} \left(  \left(  \sup_{r \ge0} e^{-\frac{1}{2} r^{2}}
r^{d-1} \right)  \int_{0}^{4a} e^{ar}\,dr + \int_{0}^{\infty}e^{-\frac{1}{4}
r^{2}} r^{d-1}\,dr \right) \\
&  \le K e^{4a^{2}}%
\end{align*}
for a suitable constant $K$ not depending on $a$. (Here $\omega_{d-1}$ is the
volume of the $(d-1)$-dimensional unit sphere of $C$.) Thus
\begin{align*}
\mathbb{E}\left[  e^{\varepsilon(\|B_{1}\|_{W}^{2} + |Z_{1}|_{C})}\right]   &
\le K \mathbb{E} \left[  e^{\varepsilon\|B_{1}\|_{W}^{2}} e^{4 \varepsilon^{2}
\|\rho\|_{op}} \right] \\
&  \le K \left(  \mathbb{E} e^{2 \varepsilon\|B_{1}\|_{W}^{2}}\right)  ^{1/2}
\left(  \mathbb{E} e^{8 \varepsilon^{2} \|\rho\|_{op}} \right)  ^{1/2}%
\end{align*}
by the Cauchy--Schwarz inequality. Fernique's theorem asserts that the first
factor is finite for small enough $\varepsilon$. For the second factor, Lemma
\ref{rhoT-estimate} shows that it is finite as soon as $\varepsilon< k$.

\end{proof}

\section{Quasi-invariance of hypoelliptic heat kernel measures}

\label{h.sec.6}

The strict positivity of $\gamma$ combined with the standard Cameron--Martin
theorem implies quasi-invariance for $\nu_{T}$ under translations by
Cameron--Martin subgroup elements.

\begin{proposition}
[Quasi-invariance under right translations I]\label{h.the.6.1} For any $T>0$,
the heat kernel measure $\nu_{T}$ is quasi-invariant under right translation
by elements of the Cameron--Martin subgroup $G_{CM}$. In particular, for any
bounded measurable $F : G \to\mathbb{R}$,
\[
\mathbb{E}[F(g_{T}\cdot g)] = \mathbb{E}[F(g_{T})\mathcal{J}_{g}(g_{T})],
\]
where
\[
\mathcal{J}_{g}(x,c) = \bar{J}_{h}(x) \gamma_{T}\left(  x-h, c-z - \frac{1}{2}
\omega(x,h)\right)  \gamma_{T}(x,c)^{-1}
\]
with
\begin{align*}
\bar{J}_{h}(x) := \exp\left(  \frac{1}{T}\langle h,x \rangle_{H} - \frac
{1}{2T}\|h\|^{2}_{H}\right)  .
\end{align*}

\end{proposition}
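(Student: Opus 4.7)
The plan is to expand $\mathbb{E}[F(g_T\cdot g)]$ using the $J_T^0$-formula of Theorem \ref{J0-formula} and then reduce the result to the form $\int F\,\mathcal{J}_g\,d\nu_T$ via just two standard changes of variable: a Lebesgue translation in $C$ and the classical Cameron--Martin translation on $(W,\mu_T)$.

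Writing $g=(h,z)$ and $g_T=(B_T,Z_T)$, we have $g_T\cdot g = (B_T+h,\,Z_T+z+\tfrac12\omega(B_T,h))$, so applying Theorem \ref{J0-formula} to $\widetilde F(x,c):=F(x+h,\,c+z+\tfrac12\omega(x,h))$ and translating the finite-dimensional Lebesgue integral over $C$ by $c\mapsto c-z-\tfrac12\omega(B_T,h)$ yields
\[
\mathbb{E}[F(g_T g)] = \mathbb{E}\int_C F(B_T+h, c)\,J^0_T\bigl(B,\, c - z - \tfrac12\omega(B_T,h)\bigr)\,dc.
\]
Fubini together with conditioning on $B_T$ (using a jointly measurable version of $\gamma_T(x,c)=\mathbb{E}[J^0_T(B,c)\mid B_T=x]$ from Corollary \ref{gamma-formula}) converts this into
\[
\int_W\int_C F(x+h,c)\,\gamma_T\bigl(x,\, c - z - \tfrac12\omega(x,h)\bigr)\,dc\,\mu_T(dx).
\]

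The remaining step is to rename $y=x+h$ using Cameron--Martin for $\mu_T$. Setting
\[
\widetilde\phi(y) := \int_C F(y,c)\,\gamma_T\bigl(y-h,\, c - z - \tfrac12\omega(y,h)\bigr)\,dc,
\]
and using the skew-symmetry identity $\omega(y-h,h)=\omega(y,h)$, the integrand above becomes $\widetilde\phi(x+h)$, so
\[
\int_W \widetilde\phi(x+h)\,\mu_T(dx) = \int_W \widetilde\phi(y)\,\bar J_h(y)\,\mu_T(dy).
\]
Multiplying and dividing by $\gamma_T(y,c)$ inside the integrand and using $\gamma_T(y,c)\mu_T(dy)\,dc=\nu_T(dy,dc)$ rewrites this as $\int_G F(y,c)\,\mathcal{J}_g(y,c)\,\nu_T(dy,dc)=\mathbb{E}[F(g_T)\mathcal{J}_g(g_T)]$ with $\mathcal{J}_g$ in the form stated. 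Since $\bar J_h>0$ everywhere and $\gamma_T>0$ $\nu_T$-a.e.\ by Corollary \ref{gamma-positive}, $\mathcal J_g$ is a well-defined, strictly positive, $\nu_T$-integrable function, which simultaneously delivers the density formula and the quasi-invariance of $\nu_T$ under right translation by $g$.

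The one subtle point is the conditioning step, where we must evaluate $\gamma_T$ at the $x$-dependent second argument $c-z-\tfrac12\omega(x,h)$; this requires choosing a jointly measurable version of $\gamma_T$, which can be extracted from a regular conditional probability for $J^0_T(B,\cdot)$ given $B_T$, or sidestepped by first verifying the identity for cylindrical $F$ of the form $F(x,c)=f(x)e^{i\lambda\cdot c}$ (for which both sides are explicit Gaussian computations) and then extending to bounded measurable $F$ by the multiplicative system argument already used in the proof of Theorem \ref{J0-formula}. Everything else is direct bookkeeping with the $J_T^0$ formula and classical Cameron--Martin on the Wiener space.
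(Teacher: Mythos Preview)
Your proof is correct and follows essentially the same strategy as the paper's: use the absolute continuity $\nu_T(dx,dc)=\gamma_T(x,c)\,\mu_T(dx)\,dc$, perform the Lebesgue translation in $C$ and the Cameron--Martin translation in $W$, then divide and multiply by $\gamma_T$. The one difference is that the paper starts directly from Corollary~\ref{gamma-formula} (the density $\gamma_T$) rather than from Theorem~\ref{J0-formula} (the pathwise $J_T^0$), which lets it bypass entirely the conditioning step you flag as subtle; once you have $\nu_T=\gamma_T\,\mu_T\otimes m$, the computation is pure measure-theoretic change of variables with no need to revisit the Brownian path or worry about joint measurability of a conditional expectation. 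Your detour through $J_T^0$ followed by conditioning back to $\gamma_T$ is valid (and you correctly identify the fix), but it is unnecessary: simply begin from Corollary~\ref{gamma-formula} and the argument shortens to exactly the paper's three-line computation.
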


\begin{proof}
Let $g =(h,z)\in G_{CM}$. Then by the translation invariance of Lebesgue
measure and the standard Cameron--Martin theorem for $(W,H,\mu_{T})$, we have
that
\begin{align*}
\int_{G} F( (x,  &  c)\cdot(h,z) ) \,d\nu_{T}(x,c)\\
&  = \int_{C} \int_{W} F\left(  x+h, c + z + \frac{1}{2} \omega(x, h)\right)
\gamma_{T}(x,c)\,\mu_{T}(dx)\,dc\\
&  = \int_{C} \int_{W} F\left(  x, c + z + \frac{1}{2} \omega(x-h,h)\right)
\gamma_{T}(x-h, c) \bar{J}_{h}(x) \,\mu_{T}(dx)\,dc\\
&  = \int_{C} \int_{W} F(x, c) \gamma_{T}\left(  x-h, c-z - \frac{1}{2}
\omega(x,h)\right)  \bar{J}_{h}(x) \,\mu_{T}(dx)\,dc.
\end{align*}
Since $\gamma_{T}>0$ $\mu_{T}\otimes dm_{C}$ a.e., this shows that
\begin{multline*}
\int_{G} F( (x,c)\cdot(h,z) ) \,d\nu_{T}(x,c)\\
= \int_{G} F(x, c) \gamma_{T}\left(  x-h, c-z - \frac{1}{2} \omega
(x,h)\right)  \bar{J}_{h}(x) \gamma_{T}(x,c)^{-1}\,d\nu_{T}(x,c).
\end{multline*}

\end{proof}

Alternatively, given the expression of the density $\gamma$ from Corollary
\ref{gamma-formula}, we may reformulate this quasi-invariance result as
follows, which will be more useful in proving subsequent integration by parts formulae.

\begin{proposition}
[Quasi-invariance under right translations II]\label{h.the.6.2} Let
$g=(h,z)\in G_{CM}$. Then, for any measurable $F:G\rightarrow[0,\infty]$ or
measurable $F:G\rightarrow\mathbb{R}$ satisfying $\mathbb{E}|F(g_{T}\cdot
g)|<\infty$, we have that
\[
\mathbb{E}[F(g_{T}\cdot g)] = \int_{C} \,dc\, \mathbb{E}\left[  F(B_{T}%
,c)J_{g}(B,c)\bar{J}_{h}(B_{T})\right]
\]
where, for $\mathbf{h}(t) = \frac{t}{T}h$ as in Notation \ref{bf-h-notation},
\[
J_{g}(B,c) := J^{0}_{T}\left(  B-\mathbf{h},c-z-\frac{1}{2}\omega
(B_{T},h)\right)  ,
\]
and $\bar{J}_{h}$ is as given in Proposition \ref{h.the.6.1}.

\end{proposition}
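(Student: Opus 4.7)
The plan is to derive the identity directly from Theorem \ref{J0-formula} applied to the shifted test function $\tilde F(x,c):=F((x,c)\cdot(h,z))$, together with the classical Cameron--Martin shift for $W$-valued Brownian motion along the finite-energy path $\mathbf{h}(t)=\tfrac{t}{T}h$. This avoids going through Proposition \ref{h.the.6.1} and its explicit Radon--Nikodym derivative $\gamma_T$, and instead lets the conditional-expectation definition of $J_g$ reassemble itself naturally.

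Concretely, the steps are as follows. First, Theorem \ref{J0-formula} applied to $\tilde F$ gives
\[
\mathbb{E}[F(g_{T}\cdot g)]=\mathbb{E}\int_{C}F\bigl(B_{T}+h,\;c+z+\tfrac{1}{2}\omega(B_{T},h)\bigr)\,J_{T}^{0}(B,c)\,dc.
\]
Second, translate $c\mapsto c-z-\tfrac{1}{2}\omega(B_{T},h)$ (translation invariance of Lebesgue measure on $C$) and swap expectation and $dc$-integral by Fubini, producing
\[
\mathbb{E}[F(g_{T}\cdot g)]=\int_{C}dc\,\mathbb{E}\!\left[F(B_{T}+h,c)\,J_{T}^{0}\bigl(B,\,c-z-\tfrac{1}{2}\omega(B_{T},h)\bigr)\right].
\]
Third, apply the Cameron--Martin theorem. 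Since $\dot{\mathbf{h}}\equiv h/T$, the shift density is exactly
\[
\exp\!\left(\tfrac{1}{T}\langle h,B_{T}\rangle_{H}-\tfrac{1}{2T}\|h\|_{H}^{2}\right)=\bar J_{h}(B_{T}),
\]
so for any suitable path functional $\Phi$ one has $\mathbb{E}[\Phi(B+\mathbf{h})]=\mathbb{E}[\Phi(B)\,\bar J_{h}(B_{T})]$. Apply this with
\[
\Phi(B):=F(B_{T},c)\,J_{T}^{0}\bigl(B-\mathbf{h},\,c-z-\tfrac{1}{2}\omega(B_{T},h)\bigr)=F(B_{T},c)\,J_{g}(B,c).
\]
Substituting $B\to B+\mathbf{h}$ sends $B_{T}\to B_{T}+h$ and $B-\mathbf{h}\to B$; skew-symmetry of $\omega$ gives $\omega(h,h)=0$, so $\omega(B_{T}+h,h)=\omega(B_{T},h)$ and $\Phi(B+\mathbf{h})$ is exactly the integrand of the previous display. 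Cameron--Martin then produces
\[
\mathbb{E}[F(g_{T}\cdot g)]=\int_{C}dc\,\mathbb{E}[F(B_{T},c)\,J_{g}(B,c)\,\bar J_{h}(B_{T})],
\]
which is the claimed identity.

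The main obstacle is verifying that the Cameron--Martin formula applies cleanly to $\Phi$, whose nontrivial path-dependence enters through $\rho_{T}(B-\mathbf{h})$ inside $J_{T}^{0}$. Since $\mathbf{h}$ is deterministic and valued in $H$, one has $\rho_{T}(B-\mathbf{h})=\rho_{T}(B)-2\rho_{T}(B,\mathbf{h})+\rho_{T}(\mathbf{h})$, which is a well-defined measurable random element of $\operatorname*{End}_{+}(C)$ by Notation \ref{rhoT-def} and Proposition \ref{ABt-bounded}, and is almost surely invertible by Corollary \ref{h.the.5.7}; hence $J_{T}^{0}(B-\mathbf{h},\cdot)$ is a legitimate path functional. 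Integrability questions — needed to justify Fubini and to pass from bounded $F$ to the nonnegative or $\mathbb{E}|F(g_{T}\cdot g)|<\infty$ cases in the statement — are handled by the bounds in Section \ref{rhoT-sec} together with the $L^{\infty-}$ control of $\bar J_{h}(B_{T})$ from the standard Cameron--Martin theorem, combined with a truncation/monotone convergence argument.
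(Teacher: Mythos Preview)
Your proof is correct and follows essentially the same route as the paper's: apply Theorem \ref{J0-formula} to $F(g_T\cdot g)$, translate the $c$-variable by Lebesgue invariance, then apply the path-space Cameron--Martin theorem along $\mathbf{h}(t)=\tfrac{t}{T}h$ (noting $\omega(h,h)=0$ so that the $\omega(B_T,h)$ term is unchanged under the shift) and observe that the path-space density collapses to $\bar J_h(B_T)$. Your framing suggests you are bypassing Proposition \ref{h.the.6.1}, but the paper's proof of Proposition \ref{h.the.6.2} is likewise independent of it; the two arguments are the same, with your write-up slightly more explicit about the skew-symmetry step and the a.s.\ invertibility of $\rho_T(B-\mathbf{h})$ via Corollary \ref{h.the.5.7}.
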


\begin{proof}
By Theorem \ref{J0-formula}, Fubini's theorem, and the translation invariance
of Lebesgue measure, we have that
\begin{align*}
\mathbb{E} &  [F(g_{T}\cdot g)]=\mathbb{E}\left[  F\left(  B_{T}%
+h,Z_{T}+z+\frac{1}{2}\omega(B_{T},h)\right)  \right]  \\
&  =\int_{C}\,dc\,\mathbb{E}\left[  F\left(  B_{T}+h,c+z+\frac{1}{2}%
\omega(B_{T},h)\right)  \frac{\exp\left(  -\frac{1}{2}\rho_{T}^{-1}c\cdot
c\right)  }{\sqrt{\det(2\pi\rho_{T})}}\right]  \\
&  =\int_{C}\,dc\,\mathbb{E}\left[  F(B_{T}+h,c)\frac{\exp\left(  -\frac{1}%
{2}\rho_{T}^{-1}(c-z-\frac{1}{2}\omega(B_{T},h))\cdot(c-z-\frac{1}{2}%
\omega(B_{T},h))\right)  }{\sqrt{\det(2\pi\rho_{T})}}\right]  .
\end{align*}
Now taking the given finite-energy path $\mathbf{h}$ and translating $B\mapsto
B-\mathbf{h}$, the standard Cameron--Martin theorem on $C([0,T];W)$ for
$\mathrm{Law}(B)$ states that
\begin{align*}
&  \mathbb{E}\left[  F(B_{T}+h,c)\frac{\exp\left(  -\frac{1}{2}\rho_{T}%
^{-1}(c-z-\frac{1}{2}\omega(B_{T},h))\cdot(c-z-\frac{1}{2}\omega
(B_{T},h))\right)  }{\sqrt{\det(2\pi\rho_{T})}}\right]  \\
&  =\mathbb{E}\left[  F(B_{T},c)\frac{\exp\left(  -\frac{1}{2}\rho
_{T}(B-\mathbf{h})^{-1}(c-z-\frac{1}{2}\omega(B_{T},h))\cdot(c-z-\frac{1}%
{2}\omega(B_{T},h))\right)  }{\sqrt{\det(2\pi\rho_{T}(B-\mathbf{h}))}%
}\bar{\mathbf{J}}_{\mathbf{h}}\right]  ,
\end{align*}
where
\[
\bar{\mathbf{J}}_{\mathbf{h}}=\bar{\mathbf{J}}_{\mathbf{h}}(B)=\exp\left(
\int_{0}^{T}\langle\dot{\mathbf{h}}(t),dB_{t}\rangle_{H}-\frac{1}{2}\int%
_{0}^{T}\Vert\dot{\mathbf{h}}(t)\Vert_{H}^{2}\,dt\right)  .
\]
Noting that for the given path $\mathbf{h}$, $\bar{\mathbf{J}}_{\mathbf{h}}$
simplifies to $\bar{J}_{h}$, completes the proof.
\end{proof}

Now, consistent with the notation $J_{g}$ and $\bar{J}_{h}$ defined in
Proposition \ref{h.the.6.2}, we set the following notation for the sequel.

\begin{notation}
\label{h.the.7.4} For any $F=F(B,c)$ and $g=(h,z)\in\mathfrak{g}_{CM}$, we
will write
\[
F_{g}(B,c) := F\left(  B- \mathbf{h},c- z-\frac{1}{2}\omega(B_{T},h)\right)
\]
where $\mathbf{h}(t)=\frac{t}{T}h$ as in Notation \ref{bf-h-notation}. Also,
without further comment, we will make the standard identification between
$\mathfrak{g}_{CM}$ and $G_{CM}$, and for $X\in\mathfrak{g}_{CM}$ we will
write $F_{X}$ to mean the analogous expression to that given above for $F_{g}%
$. Furthermore, we will define
\[
(\tilde{\mathbf{X}}F)(B,c) := \frac{d}{d\varepsilon}\bigg|_{0} F_{\varepsilon
X} (B,c).
\]
Note that when $F=F(B_{T},c)$, then $F_{X}(B_{T},c) = F( (B_{T},c)\cdot-X)$
and $(\tilde{\mathbf{X}}F)(B_{T},c) = -(\tilde{X}F)(B_{T},c)$.
\end{notation}

Although there is a natural group structure on the path space over
$G$, note that the vector field $\tilde{\mathbf{X}}$ is not invariant with respect to
this structure.

With this notation in place, we record the following statement which may be
observed by following the proof of Proposition \ref{h.the.6.2}.

\begin{corollary}
\label{h.the.6.4} Let $g=(h,z)\in G_{CM}$. Then, for any measurable
$F:G\rightarrow\lbrack0,\infty]$ and $\Psi:C([0,T];W)\times C\rightarrow
\lbrack0,\infty]$ we have that
\begin{multline*}
\int_{C} dc \,\mathbb{E}[F( (B_{T},c)\cdot g)\Psi(B,c)J^{0}_{T}(B,c)]\\
=\int_{C}\,dc\,\mathbb{E}\left[  F(B_{T},c)\Psi_{g}(B,c) J_{g}(B,c)\bar{J}%
_{h}(B_{T})\right]  ,
\end{multline*}
for $\mathbf{h}(t)=\frac{t}{T}h$, $J_{g}$, and $\bar{J}_{h}$ as given in
Proposition \ref{h.the.6.2}. Moreover, this equality holds for any measurable
$F:G\rightarrow\mathbb{R}$ and $\Psi:C([0,T];W)\times C\rightarrow\mathbb{R}$
satisfying
\[
\int_{C} dc \,\mathbb{E}[|F( (B_{T},c)\cdot g)\Psi(B,c)|J^{0}_{T}(B,c)]
<\infty.
\]

\end{corollary}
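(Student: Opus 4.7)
My approach is to follow the proof of Proposition \ref{h.the.6.2} step by step, carrying along the additional factor $\Psi(B,c)$ throughout. Starting from the left-hand side and expanding the group product $(B_T, c) \cdot g = (B_T + h, c + z + \tfrac{1}{2}\omega(B_T, h))$, I first bring the $c$-integral inside the expectation by Tonelli's theorem, and then apply the change of variables $c \mapsto c - z - \tfrac{1}{2}\omega(B_T, h)$ in the $c$-integral. This substitution is legitimate because Lebesgue measure on $C$ is translation invariant and, for each fixed sample point, the shift $z + \tfrac{1}{2}\omega(B_T, h)$ is a constant in $c$. The result is
\[
\int_C dc\, \mathbb{E}\!\left[F(B_T + h, c)\, \Psi\!\left(B, c - z - \tfrac{1}{2}\omega(B_T, h)\right) J^0_T\!\left(B, c - z - \tfrac{1}{2}\omega(B_T, h)\right)\right].
\]

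Next I apply the standard Cameron--Martin theorem on path space $C([0,T]; W)$ with the finite-energy perturbation $\mathbf{h}(t) = \tfrac{t}{T} h$ as in Notation \ref{bf-h-notation}. The translation $B \mapsto B - \mathbf{h}$ converts $B_T + h$ into $B_T$ and replaces every other occurrence of $B$ (including the whole path inside $\rho_T$ and the path-dependent arguments of $\Psi$ and $J^0_T$) by $B - \mathbf{h}$. Since $\omega$ is skew-symmetric, $\omega(B_T - h, h) = \omega(B_T, h) - \omega(h,h) = \omega(B_T, h)$, so the shift terms depending on $\omega(B_T, h)$ remain unchanged under the substitution. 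The Cameron--Martin Jacobian is precisely $\bar{\mathbf{J}}_{\mathbf{h}} = \exp\!\left(\tfrac{1}{T}\langle h, B_T\rangle_H - \tfrac{1}{2T}\|h\|_H^2\right) = \bar{J}_h(B_T)$, as already computed in the proof of Proposition \ref{h.the.6.2}. Recognizing the resulting integrand via the definitions of $\Psi_g$ and $J_g$ from Notation \ref{h.the.7.4} and Proposition \ref{h.the.6.2} yields exactly the right-hand side.

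The nonnegative case is established by the chain of equalities above, with all integrands in $[0, \infty]$ so that Tonelli's theorem justifies every swap. The signed case is obtained by decomposing $F = F^+ - F^-$ and $\Psi = \Psi^+ - \Psi^-$ and applying the nonnegative version to the four resulting cross-terms; the integrability hypothesis ensures that all four terms are finite, so subtraction is valid. I do not anticipate any significant obstacle here: the corollary is a mild generalization of Proposition \ref{h.the.6.2} whose proof already contains all the necessary machinery, and the only bookkeeping point is ensuring that the Cameron--Martin shift is applied consistently to the full path-dependence of $\Psi$, which is automatic since Cameron--Martin is a statement about the entire path-space Wiener measure.
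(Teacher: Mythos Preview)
Your proposal is correct and follows exactly the approach the paper intends: the paper does not give an explicit proof of this corollary but simply states that it ``may be observed by following the proof of Proposition \ref{h.the.6.2},'' and you have carried out precisely that verification, including the translation of Lebesgue measure in $c$, the Cameron--Martin shift on path space, and the bookkeeping showing that the $\omega(B_T,h)$ term is unaffected by the shift since $\omega(h,h)=0$. Your treatment of the signed case via decomposition into positive and negative parts is also standard and correct.
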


One can directly prove quasi-invariance under left translations in a similar
manner to Propositions \ref{h.the.6.1} or \ref{h.the.6.2}, but it also follows
from quasi-invariance under right translations combined with the invariance of
$\nu_{T}$ under inversions.

\begin{corollary}
[Quasi-invariance under left translations]\label{h.the.6.5} Let $g\in G_{CM}$.
Then, for any measurable $F:G\rightarrow[0,\infty]$ or measurable
$F:G\rightarrow\mathbb{R}$ satisfying $\mathbb{E}|F(g_{T}\cdot g)|<\infty$,
\[
\mathbb{E}[F(g\cdot g_{T})] = \mathbb{E}[F(g_{T}) \tilde{\mathcal{J}}%
_{g}(g_{T})],
\]
where
\[
\tilde{\mathcal{J}}_{g}(g^{\prime}) = \mathcal{J}_{g^{-1}}( (g^{\prime-1})
\]
for all $g^{\prime}\in G$ and $\mathcal{J}_{g}$ as given in Proposition
\ref{h.the.6.1}.
\end{corollary}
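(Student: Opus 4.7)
\medskip

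\noindent\textbf{Proof proposal.} The plan is to reduce left translation by $g$ to right translation by $g^{-1}$ using the inversion invariance of $\nu_T$ established in Corollary \ref{inversion-invariant}, so that Proposition \ref{h.the.6.1} can be applied. The key algebraic identity is
\[
g\cdot g' \;=\; \bigl((g')^{-1}\cdot g^{-1}\bigr)^{-1},
\]
which holds in any group and allows the two inversions to swap the order of multiplication.

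First, I would rewrite the left-translated expression by composing with the inversion map. Setting $\widetilde F(g'):=F((g')^{-1})$, the identity above gives
\[
F(g\cdot g_T)=\widetilde F\bigl(g_T^{-1}\cdot g^{-1}\bigr).
\]
Taking expectations and applying Corollary \ref{inversion-invariant} to the random variable $\Psi(g_T):=\widetilde F(g_T\cdot g^{-1})$ (i.e., replacing the process $g_T$ by $g_T^{-1}$, which has the same law), I obtain
\[
\mathbb{E}\bigl[F(g\cdot g_T)\bigr]=\mathbb{E}\bigl[\widetilde F(g_T\cdot g^{-1})\bigr].
\]

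Second, I would apply Proposition \ref{h.the.6.1} (quasi-invariance under right translation) with the Cameron--Martin element $g^{-1}\in G_{CM}$ and the test function $\widetilde F$ to get
\[
\mathbb{E}\bigl[\widetilde F(g_T\cdot g^{-1})\bigr]=\mathbb{E}\bigl[\widetilde F(g_T)\,\mathcal{J}_{g^{-1}}(g_T)\bigr]=\mathbb{E}\bigl[F(g_T^{-1})\,\mathcal{J}_{g^{-1}}(g_T)\bigr].
\]

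Finally, one more application of Corollary \ref{inversion-invariant}, this time to the random variable $\Phi(g_T):=F(g_T^{-1})\,\mathcal{J}_{g^{-1}}(g_T)$, yields
\[
\mathbb{E}\bigl[F(g_T^{-1})\,\mathcal{J}_{g^{-1}}(g_T)\bigr]=\mathbb{E}\bigl[F(g_T)\,\mathcal{J}_{g^{-1}}(g_T^{-1})\bigr]=\mathbb{E}\bigl[F(g_T)\,\widetilde{\mathcal{J}}_g(g_T)\bigr],
\]
using the definition of $\widetilde{\mathcal{J}}_g$. Combining the three displays gives the claim.

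There is no real obstacle here: inversion invariance and right quasi-invariance are already in hand, and the argument is purely formal. The only points requiring minor care are (i) checking the integrability hypotheses carry over when $F$ is replaced by $\widetilde F$ and when $\mathcal{J}_{g^{-1}}$ is inserted (the nonnegative case reduces to monotone convergence, while the signed case reduces to the nonnegative one applied to $|F|$ together with the assumption $\mathbb{E}|F(g\cdot g_T)|<\infty$), and (ii) verifying that $\mathcal{J}_{g^{-1}}$ is measurable and a.e.\ finite, which is immediate from its explicit form in Proposition \ref{h.the.6.1} together with Corollary \ref{gamma-positive}.
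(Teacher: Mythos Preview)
Your proposal is correct and follows essentially the same route as the paper: define $u(g')=F((g')^{-1})$, use inversion invariance (Corollary~\ref{inversion-invariant}) to pass from $F(g\cdot g_T)$ to $u(g_T\cdot g^{-1})$, apply right quasi-invariance (Proposition~\ref{h.the.6.1}) with $g^{-1}$, and then use inversion invariance once more. Your remarks on integrability in point~(i) are in fact slightly more careful than the paper's own treatment.
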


\begin{proof}
Let $u(g):=F(g^{-1})$. Then repeatedly applying Corollary
\ref{inversion-invariant} (the invariance of $\nu_{T}$ under inversion) gives
\begin{align*}
\mathbb{E}[F(g\cdot g_{T})]  &  =\mathbb{E}[F(g\cdot g_{T}^{-1})]=\mathbb{E}%
[F(( g_{T}\cdot g^{-1})^{-1})]\\
&  =\mathbb{E}[u( g_{T}\cdot g^{-1})]=\mathbb{E}[u(g_{T})\mathcal{J}_{g^{-1}%
}(g_{T})]\\
&  =\mathbb{E}[u(g_{T}^{-1})\mathcal{J}_{g^{-1}}(g_{T}^{-1})]=\mathbb{E}%
[F(g_{T})\mathcal{J}_{g^{-1}}(g_{T}^{-1})].
\end{align*}

\end{proof}

\section{Smoothness properties of hypoelliptic heat kernel measures}

\label{h.sub.7.1}

In this section, we expand on the quasi-invariance results in the previous
section, and study the integrability and smoothness of the corresponding
Radon--Nikodym derivatives.
Similar techniques were used to handle the elliptic case in \cite{Dobbs2013}.
Again for this section, we fix $T>0$.

First we recall some known results for the standard Cameron--Martin
Radon--Nikodym derivative.

\begin{proposition}
\label{h.the.7.1} For $h\in H$, we have $\bar{J}_{h}(B_{T})\in L^{\infty-}$
and
\[
\sup_{|\varepsilon|\le1} \left|  \frac{d}{d\varepsilon} \bar{J}_{\varepsilon
h}(B_{T})\right|  = \sup_{|\varepsilon|\le1} \left\{  \bar{J}_{\varepsilon
h}(B_{T})\cdot\left|  \frac{1}{T}\langle h,B_{T}\rangle- \frac{\varepsilon}%
{T}\|h\|_{H}^{2}\right|  \right\}  \in L^{\infty-}.
\]

\end{proposition}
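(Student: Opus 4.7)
\medskip

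\noindent\textbf{Proof proposal.} The equality claimed between the two suprema is just the chain rule applied to
\[
\bar J_{\varepsilon h}(B_T)=\exp\!\left(\tfrac{\varepsilon}{T}\langle h,B_T\rangle_H-\tfrac{\varepsilon^{2}}{2T}\|h\|_H^{2}\right),
\]
which gives
\[
\frac{d}{d\varepsilon}\bar J_{\varepsilon h}(B_T)=\bar J_{\varepsilon h}(B_T)\cdot\!\left(\tfrac{1}{T}\langle h,B_T\rangle_H-\tfrac{\varepsilon}{T}\|h\|_H^{2}\right),
\]
so only the $L^{\infty-}$ integrability assertions actually require work.

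The plan is to reduce both integrability claims to the single observation that, under $\mathbb{P}$, the random variable $X:=\langle h,B_T\rangle_H$ is a real Gaussian with mean $0$ and variance $T\|h\|_H^{2}$ (interpreting $\langle h,\cdot\rangle_H$ as the $\mu_T$-measurable linear functional on $W$ as in Notation \ref{Wstar}). Consequently, for every $c>0$, the random variable $e^{c|X|}$ belongs to $L^{\infty-}$, and any random variable of the form $P(X)e^{c|X|}$ with $P$ a polynomial in $X$ also lies in $L^{\infty-}$ by Hölder. For the first statement, I would just compute
\[
\mathbb{E}\bar J_h(B_T)^{p}=\exp\!\left(\tfrac{p^{2}-p}{2T}\|h\|_H^{2}\right)<\infty\quad\text{for every }p\ge 1,
\]
using the Gaussian moment generating function.

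For the supremum statement, I would bound uniformly in $|\varepsilon|\le 1$. Since $-\tfrac{\varepsilon^{2}}{2T}\|h\|_H^{2}\le 0$ and $|\varepsilon|\le 1$,
\[
\sup_{|\varepsilon|\le 1}\bar J_{\varepsilon h}(B_T)\le\exp\!\left(\tfrac{1}{T}|X|\right),
\]
while
\[
\sup_{|\varepsilon|\le 1}\left|\tfrac{1}{T}\langle h,B_T\rangle_H-\tfrac{\varepsilon}{T}\|h\|_H^{2}\right|\le\tfrac{1}{T}|X|+\tfrac{1}{T}\|h\|_H^{2}.
\]
Multiplying these two bounds yields a domination of the supremum by
\[
\exp\!\left(\tfrac{1}{T}|X|\right)\cdot\left(\tfrac{1}{T}|X|+\tfrac{1}{T}\|h\|_H^{2}\right),
\]
and this random variable lies in $L^{\infty-}$ by the Gaussian tail observation above. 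Combining the two bounds gives the result.

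There is no real obstacle here; the only mild subtlety is justifying that $\langle h,B_T\rangle_H$ makes sense as a Gaussian random variable when $h\in H\setminus W_{\ast}$, which is handled by the standard identification of $H$ with its image of measurable linear functionals on $(W,\mu_T)$ recalled in Notation \ref{Wstar}.
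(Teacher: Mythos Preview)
Your proposal is correct and follows essentially the same approach as the paper: the paper only sketches the first assertion by computing $\mathbb{E}[\bar J_h(B_T)^p]=\exp\bigl(\tfrac{p^2-p}{2T}\|h\|_H^2\bigr)$ via the Gaussian moment generating function, and leaves the supremum statement as ``easily proved''; your bound $\sup_{|\varepsilon|\le 1}\bar J_{\varepsilon h}(B_T)\le e^{|X|/T}$ together with the linear bound on the second factor is exactly the kind of elementary Gaussian-tail argument intended.
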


These estimates are easily proved, since for example, for all $p\in[1,\infty
)$,
\begin{align*}
\mathbb{E}[\bar{J}_{h}(B_{T})^{p}]  &  = \mathbb{E}\left[  \exp\left(
\frac{p}{T}\langle h,B_{T}\rangle- \frac{p}{2T}\|h\|_{H}^{2}\right)  \right]
= \exp\left(  \frac{1}{2T}(p^{2}-p)\|h\|_{H}^{2} \right)  <\infty.
\end{align*}
Now we need to prove analogous results for $J_{g}$ and its derivatives.

\begin{proposition}
\label{h.the.7.2} For any $p\in\lbrack1,\infty)$ and $g\in G_{CM}$,
\[
\mathbb{E}\left[  \int_{\mathbf{C}}\,dc\,\sup_{|\varepsilon|\leq
1}J_{\varepsilon g}(B,c)^{p}\right]  <\infty.
\]
\end{proposition}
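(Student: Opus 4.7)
The plan is to control $\sup_{|\varepsilon|\le 1} J_{\varepsilon g}(B,c)^p$ pointwise by a product of a random function of $B$ (with good moments) and an integrable-in-$c$ Gaussian factor, then integrate. Writing $\kappa(B) := z + \tfrac{1}{2}\omega(B_T, h)$, $c_\varepsilon := c - \varepsilon \kappa(B)$, and $\rho_T^\varepsilon := \rho_T(B - \varepsilon \mathbf{h})$, one has
\[
J_{\varepsilon g}(B,c)^p = \frac{\exp\bigl(-\tfrac{p}{2}(\rho_T^\varepsilon)^{-1} c_\varepsilon \cdot c_\varepsilon\bigr)}{(2\pi)^{dp/2} (\det \rho_T^\varepsilon)^{p/2}}.
\]
The two crucial auxiliary quantities are
\[
m(B) := \sup_{|\varepsilon|\le 1} \|(\rho_T^\varepsilon)^{-1}\|_{op}, \qquad M(B) := \sup_{|\varepsilon|\le 1} \|\rho_T^\varepsilon\|_{op}.
\]
Using $(\det \rho_T^\varepsilon)^{-p/2} \le m(B)^{dp/2}$ (bound the determinant below by the smallest-eigenvalue power) and the elementary inequality $(\rho_T^\varepsilon)^{-1} c_\varepsilon \cdot c_\varepsilon \ge M(B)^{-1}|c_\varepsilon|^2$, I would obtain the pointwise bound
\[
\sup_{|\varepsilon|\le 1} J_{\varepsilon g}(B,c)^p \le \frac{m(B)^{dp/2}}{(2\pi)^{dp/2}} \sup_{|\varepsilon|\le 1} \exp\!\left(-\frac{p|c_\varepsilon|^2}{2M(B)}\right).
\]

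To deal with the remaining supremum I would split the $c$-integral at $|c| = 2|\kappa(B)|$. On $\{|c| < 2|\kappa(B)|\}$, simply bound the exponential by $1$; this contributes a $c$-volume factor of order $|\kappa(B)|^d$. On $\{|c| \ge 2|\kappa(B)|\}$, the triangle inequality gives $|c_\varepsilon| \ge |c| - |\kappa(B)| \ge |c|/2$ uniformly in $|\varepsilon|\le 1$, so the exponential is bounded by $\exp(-p|c|^2/(8M(B)))$ whose $c$-integral over $\mathbb{R}^d$ is of order $M(B)^{d/2}$. Combining these gives
\[
\int_C \sup_{|\varepsilon|\le 1} J_{\varepsilon g}(B,c)^p\,dc \le C_{p,d}\, m(B)^{dp/2}\bigl(|\kappa(B)|^d + M(B)^{d/2}\bigr),
\]
so it suffices to show that each of $m(B)$, $M(B)$, $|\kappa(B)|$ lies in $L^{\infty-}(\mathbb{P})$; H\"older's inequality then finishes the proof.

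The bound $m(B) \in L^{\infty-}$ is exactly Corollary~\ref{h.the.5.7}(1) applied with $\alpha_0 = 1$, and $|\kappa(B)| \in L^{\infty-}$ follows from Fernique's theorem since $B_T \mapsto \omega(B_T, h)$ is a $C$-valued Gaussian (see Section~\ref{s.mga}). The main obstacle is the moment bound on $M(B)$, which is not directly provided by the results of Section~\ref{rhoT-sec} (those address only the inverse). I would close this gap by a bilinear expansion
\[
\rho_T^\varepsilon = \rho_T(B) - \varepsilon\bigl[\rho_T(B, \mathbf{h}) + \rho_T(\mathbf{h}, B)\bigr] + \varepsilon^2 \rho_T(\mathbf{h}),
\]
giving $M(B) \le \|\rho_T(B)\|_{op} + 2\|\rho_T(B, \mathbf{h})\|_{op} + \|\rho_T(\mathbf{h})\|_{op}$. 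The first term has all moments by Lemma~\ref{rhoT-large-dev}; the middle term is linear (and hence Gaussian) in $B$, so Fernique's theorem again gives it all moments; and the last is deterministic.
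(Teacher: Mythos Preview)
Your proof is correct and follows essentially the same approach as the paper: the paper likewise bounds the determinant factor by powers of $\sup_{|\varepsilon|\le 1}\|\rho_T(B-\varepsilon\mathbf{h})^{-1}\|_{op}$ via Lemma~\ref{rhoT-estimate}, uses the same inequality $(\rho_T^\varepsilon)^{-1}c_\varepsilon\cdot c_\varepsilon \ge \|\rho_T^\varepsilon\|_{op}^{-1}|c_\varepsilon|^2$, splits the $c$-integral at $|c|=2|V|$ (your $\kappa$ is their $V$), and controls $Y=\sup_{|\varepsilon|\le 1}\|\rho_T^\varepsilon\|_{op}$ by the identical bilinear expansion invoking Lemma~\ref{rhoT-large-dev} and Fernique. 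Your write-up is slightly more explicit about the final H\"older step, but the arguments are otherwise the same.
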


\begin{proof}
For $g=(h,z)$, we have that
\begin{multline*}
\sup_{|\varepsilon|\leq1}J_{\varepsilon g}(B,c) \le\\
\frac{\exp\left(  -\frac{1}{2}\inf_{|\varepsilon|\leq1}\rho_{T}(B-\varepsilon
\mathbf{h})^{-1}\left(  c-\varepsilon z-\frac{1}{2}\varepsilon\omega
(B_{T},h)\right)  \cdot\left(  c-\varepsilon z-\frac{1}{2}\varepsilon
\omega(B_{T},h)\right)  \right)  }{\inf_{|\varepsilon|\leq1}%
\sqrt{\det(2\pi\rho_{T}(B-\varepsilon\mathbf{h}))}}.
\end{multline*}
Lemma \ref{rhoT-estimate} gives the integrability of $\left(  \inf
_{|\varepsilon|\leq1}\det(2\pi\rho_{T}(B-\varepsilon\mathbf{h}))\right)  ^{-p/2}$,
so we need now only deal with the exponential term.

For brevity, set $V = z - \frac{1}{2} \omega(B_{T}, h)$ (as a random element
of $C$); we must show that
\[
\exp\left(  -\frac{p}{2} \inf_{|\varepsilon| \le1} \rho_{T}(B - \varepsilon
\mathbf{h})^{-1} (c - \varepsilon V) \cdot(c - \varepsilon V) \right)  \in
L^{1}(\mathbb{P} \times dc).
\]
Observe the following elementary inequality from linear algebra: if $x \in C$
and $A \in\operatorname*{End}(C)$ is a symmetric positive-definite linear
transformation, then
\begin{align*}
|x|^{2} = |A^{1/2} A^{-1/2} x|^{2} \le \|A^{1/2}\|_{op}^{2} |A^{-1/2} x|^{2} =
\|A\|_{op} (A^{-1} x \cdot x).
\end{align*}
Thus if we set $Y = \sup_{|\varepsilon| \le1} \|\rho_{T}(B - \varepsilon
\mathbf{h})\|_{op}$, we have
\begin{align*}
\rho_{T}(B - \varepsilon\mathbf{h})^{-1} (c - \varepsilon V) \cdot(c -
\varepsilon V) \ge\frac{1}{Y} |c - \varepsilon V|^{2}.
\end{align*}
For $|\varepsilon| \le1$, we have $|c - \varepsilon V| \ge |c| -
\varepsilon |V| \ge |c| - |V|$, and thus
\begin{align*}
|c - \varepsilon V| \ge%
\begin{cases}
0, & |c| < 2 |V| \\
\frac{1}{2}|c|, & |c| \ge2 |V|.
\end{cases}
\end{align*}
Putting all of this together, we have
\begin{align*}
&  \mathbb{E} \left[  \int_{C} \exp\left(  -\frac{p}{2} \inf_{|\varepsilon|
\le1} \rho_{T}(B - \varepsilon\mathbf{h}) (c - \varepsilon V) \cdot(c -
\varepsilon V) \right)  \,dc \right] \\
&  \le\mathbb{E} \left[  \int_{|c| \le2|V|} 1\,dc + \int_{|c| \ge2
|V|} \exp\left(  -\frac{p}{8Y} |c|^{2}\right)  \,dc \right] \\
&  \le\mathbb{E} \left[  m(B(0, 2|V|)) + \left(  \frac{8 \pi Y}{p} \right)
^{d/2} \right]  \le C_{p} \mathbb{E} \left[  |V|^{d} + Y^{d/2}\right]
\end{align*}
where $m$ denotes Lebesgue measure on $C$. To complete the proof, it suffices
to show that $|V|, Y \in L^{\infty-}(\mathbb{P})$. This is straightforward
since
\begin{align*}
\|\rho_{T}(B - \varepsilon\mathbf{h})\| _{op} &  \le\|\rho_{T}(B)\|_{op} + 2 \|\rho_{T}(B,
\mathbf{h})\|_{op} + \|\rho_{T}(\mathbf{h})\|_{op}
\end{align*}
for $|\varepsilon|\le1$, $\rho_{T}(B)$ is in the second homogeneous Wiener
chaos, and $\omega(B_{T}, h)$ and $\rho_{T}(B, \mathbf{h})$ are linear in $B$
and hence Gaussian.
\end{proof}

\begin{definition}
\label{h.the.7.3} A \textbf{polynomial in $A_{1},\ldots,A_{k}\in
\mathrm{End}(C)$ and $c_{1},\ldots,c_{\ell}\in C$} is a function which may be
written as sums of products of factors of the form
\[
A_{i_{1}}\cdots A_{i_{r}} c_{i}\cdot c_{j} \quad\text{ and } \quad
\operatorname{tr}(A_{i_{1}}\cdots A_{i_{r}})
\]
for some $i_{1},\ldots,i_{r}\in\{1,\ldots,k\}$ and $i,j\in\{1,\ldots,\ell\}$.
\end{definition}

\begin{lemma}
\label{h.the.7.5} Given $h_{1},\ldots,h_{m}\in H$ and $X=(h,z)\in
\mathfrak{g}_{CM}$, suppose that $F=F(B,c)$ is polynomial in the matrices
$\rho_{T}(B)^{-1}$ and $\rho_{T}(B,\mathbf{h}_{i})$ and vectors $c$ and
$\omega(B_{T},h_{i})$. Then for any $p\in[1,\infty)$
\[
\mathbb{E}\left[  \int_{C} dc\, \sup_{|\varepsilon|\le1} \left\{  \left|
F_{\varepsilon X}(B,c) \right|  ^{p} J_{\varepsilon X}(B,c) \right\}  \right]
<\infty.
\]

\end{lemma}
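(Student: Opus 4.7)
The plan is to adapt the strategy of Proposition \ref{h.the.7.2}, reducing the $c$-integral to a Gaussian-moment computation and absorbing the extra polynomial factor $|F_{\varepsilon X}|^p$ via Hölder's inequality.

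First, I substitute the shifts from Notation \ref{h.the.7.4} into the polynomial form of $F$. Writing $V := z + \tfrac{1}{2}\omega(B_T, h) \in C$, the quantity $F_{\varepsilon X}(B,c)$ is a polynomial in
\[
\rho_T(B - \varepsilon \mathbf{h})^{-1}, \quad \rho_T(B, \mathbf{h}_i) - \varepsilon \rho_T(\mathbf{h}, \mathbf{h}_i), \quad c - \varepsilon V, \quad \omega(B_T, h_i) - \varepsilon \omega(h, h_i),
\]
since each quantity appearing in $F$ transforms linearly in $\varepsilon$. For $|\varepsilon|\le 1$ one obtains a pointwise estimate
\[
\sup_{|\varepsilon|\le 1}|F_{\varepsilon X}(B,c)|^p \le \Xi(B)\bigl(1 + |c|^N\bigr)
\]
where $N = p \deg_c F$ and $\Xi(B)$ is a polynomial expression in the random quantities $\sup_{|\varepsilon|\le 1}\|\rho_T(B - \varepsilon\mathbf{h})^{-1}\|_{op}$, $\|\rho_T(B,\mathbf{h}_i)\|_{op}$, $|V|$, and $|\omega(B_T,h_i)|$. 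Corollary \ref{h.the.5.7}(1) gives $L^{\infty-}$ control of the first factor; the remaining factors are either Gaussian (linear in $B$) or second-Wiener-chaos elements, hence all in $L^{\infty-}(\mathbb{P})$, and thus so is $\Xi(B)$.

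Next, I control $\int_C (1 + |c|^N) \sup_\varepsilon J_{\varepsilon X}(B,c)\,dc$ exactly as in Proposition \ref{h.the.7.2}. Setting $Y := \sup_{|\varepsilon|\le 1}\|\rho_T(B - \varepsilon\mathbf{h})\|_{op}$, the elementary bound $(\rho_T)^{-1}\tilde c\cdot\tilde c \ge \|\rho_T\|_{op}^{-1}|\tilde c|^2$ combined with $\det(2\pi\rho_T)^{-1/2} \le (2\pi)^{-d/2}\|\rho_T^{-1}\|_{op}^{d/2}$ yields
\[
\sup_{|\varepsilon|\le 1} J_{\varepsilon X}(B,c) \le \frac{\sup_\varepsilon\|\rho_T(B - \varepsilon\mathbf{h})^{-1}\|_{op}^{d/2}}{(2\pi)^{d/2}}\,\sup_{|\varepsilon|\le 1}\exp\!\left(-\frac{|c - \varepsilon V|^2}{2\|\rho_T(B - \varepsilon\mathbf{h})\|_{op}}\right).
\]
Splitting into the regions $\{|c| \le 2|V|\}$ (where the exponential is bounded by $1$) and $\{|c| \ge 2|V|\}$ (where $|c - \varepsilon V|\ge |c|/2$ gives Gaussian decay at rate $1/(8Y)$), and integrating $(1 + |c|^N)$ against the dominating function produces a bound of the form
\[
\int_C (1 + |c|^N)\sup_\varepsilon J_{\varepsilon X}(B,c)\,dc \le C\,\sup_\varepsilon\|\rho_T^{-1}\|_{op}^{d/2}\bigl(|V|^{N+d} + Y^{(N+d)/2}\bigr),
\]
which lies in $L^{\infty-}(\mathbb{P})$ by the same tools as above (Corollary \ref{h.the.5.7}, Gaussianity of $V$, and second-chaos tails for $\|\rho_T\|_{op}$, the latter via Lemma \ref{rhoT-large-dev} applied to $B - \varepsilon\mathbf{h}$).

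Combining these two $L^{\infty-}$ estimates by Hölder's inequality with arbitrarily large conjugate exponents yields the claimed finite expectation. The only mildly delicate point is that the supremum over $\varepsilon$ cannot be exchanged freely with the $c$-integral or with stochastic operations; the approach circumvents this by producing a single pointwise ($\omega$- and $c$-wise) dominating function whose integrability is then established. This step is essentially the same trick that made Proposition \ref{h.the.7.2} work, and the present lemma differs only by the presence of extra polynomial factors of $c$ and of $B$-dependent quantities, all of which are absorbed without issue.
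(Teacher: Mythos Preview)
Your proposal is correct and follows essentially the same approach as the paper: bound $|F_{\varepsilon X}|$ by a polynomial in $|c|$ times a factor depending only on $B$ (controlled via Corollary \ref{h.the.5.7}, Gaussianity, and Wiener-chaos tails), then handle the $c$-integral of $(1+|c|^N)\sup_\varepsilon J_{\varepsilon X}$ by the same splitting argument used in Proposition \ref{h.the.7.2}, and finally combine with H\"older. The only minor imprecision is your citation of Lemma \ref{rhoT-large-dev} ``applied to $B-\varepsilon\mathbf{h}$'' for the control of $Y$; as you yourself note elsewhere, the correct route is the bilinear expansion $\|\rho_T(B-\varepsilon\mathbf{h})\|_{op}\le \|\rho_T(B)\|_{op}+2\|\rho_T(B,\mathbf{h})\|_{op}+\|\rho_T(\mathbf{h})\|_{op}$, which is how the paper handles it.
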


\begin{proof}
There exist $K,M<\infty$ such that
\begin{align*}
|F_{\varepsilon X}(B,c)|  &  \le K\bigg(1 + \|\rho_{T}(B-\varepsilon
\mathbf{h})^{-1}\|_{op} + \sum_{i=1}^{m}\bigg(\|\rho_{T}(B-\varepsilon\mathbf{h}%
,\mathbf{h}_{i})\|_{op}\\
&  \qquad\qquad+ |\omega(B_{T}-\varepsilon h,h_{i})|_{C} + \left|
c-\varepsilon z - \frac{1}{2} \omega(B_{T}-\varepsilon h,h_{i}) \right|
_{C}\bigg)\bigg)^{M}.
\end{align*}
Thus,
\begin{multline*}
\sup_{|\varepsilon|\leq1} |F_{\varepsilon X}(B,c)| \le C(h_{i},X)\bigg( 1+
\sup_{|\varepsilon|\leq1} \|\rho_{T}(B-\varepsilon\mathbf{h})^{-1}\|_{op}\\
+ \sum_{i=1}^{m}(\|\rho_{T}(B,\mathbf{h}_{i})\|_{op} + |\omega(B_{T},h_{i})|_C) +
|c|_{C}\bigg) ^{M}%
\end{multline*}
and the result follows from each $\omega(B_{T}, h_{i})$ being Gaussian,
Fernique's Theorem, Lemma \ref{rhoT-estimate}, Proposition \ref{h.the.7.2},
and the fact that
\[
\mathbb{E}\left[  \int_{C} dc\, |c|_{C}^{M} \sup_{|\varepsilon|\leq1}
J_{\varepsilon X}(B,c) \right]  < \infty
\]
by computations analogous to those in the proof of Proposition \ref{h.the.7.2}.
\end{proof}

\begin{proposition}
\label{h.the.7.6} For any $p\in\lbrack1,\infty)$ and $X=(h,z)\in G_{CM}$,
\[
\mathbb{E}\left[  \int_{\mathbf{C}}\,dc\,\sup_{|\varepsilon|\leq1}\left\vert
\frac{d}{d\varepsilon}J_{\varepsilon X}(B,c)\right\vert ^{p}\right]  <\infty.
\]
\end{proposition}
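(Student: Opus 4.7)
The plan is to differentiate $J_{\varepsilon X}$ explicitly and establish a factorization
\[
\frac{d}{d\varepsilon}J_{\varepsilon X}(B,c) = F_{\varepsilon X}(B,c)\,J_{\varepsilon X}(B,c)
\]
where $F$ is polynomial in the sense of Definition \ref{h.the.7.3}; the desired estimate will then follow by a Cauchy--Schwarz splitting that reduces matters to Lemma \ref{h.the.7.5} and Proposition \ref{h.the.7.2}.

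First, I would compute $F(B,c) := \frac{d}{d\varepsilon}\big|_{0}\log J_{\varepsilon X}(B,c)$ via the chain rule, Jacobi's formula $\tfrac{d}{d\varepsilon}\log\det A(\varepsilon) = \operatorname{tr}(A(\varepsilon)^{-1}\dot{A}(\varepsilon))$, and the bilinear expansion $\rho_T(B-\varepsilon\mathbf{h}) = \rho_T(B) - \varepsilon(\rho_T(B,\mathbf{h}) + \rho_T(\mathbf{h},B)) + \varepsilon^2 \rho_T(\mathbf{h})$. Using self-adjointness of $\rho_T(B)^{-1}$ to collapse the adjoint terms (so that $\operatorname{tr}(\rho_T(B)^{-1}\rho_T(\mathbf{h},B)) = \operatorname{tr}(\rho_T(B)^{-1}\rho_T(B,\mathbf{h}))$ and $(A+A^*)v\cdot v = 2Av\cdot v$) one obtains
\begin{align*}
F(B,c) = \operatorname{tr}\bigl(\rho_T(B)^{-1}\rho_T(B,\mathbf{h})\bigr) &+ \rho_T(B)^{-1}c \cdot \bigl(z + \tfrac{1}{2}\omega(B_T,h)\bigr) \\
&- \rho_T(B)^{-1}\rho_T(B,\mathbf{h})\rho_T(B)^{-1}c \cdot c,
\end{align*}
which is polynomial in the matrices $\rho_T(B)^{-1}, \rho_T(B,\mathbf{h})$ and the vectors $c, z, \omega(B_T,h)$, fitting the hypotheses of Lemma \ref{h.the.7.5}. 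The identity for general $\varepsilon$ then follows from the ``flow'' property $J_{(\varepsilon+u)X}(B,c) = J_{uX}(B-\varepsilon\mathbf{h}, c_\varepsilon)$, which relies on $\omega(h,h) = 0$ to give $\omega(B_T,h) = \omega((B-\varepsilon\mathbf{h})_T,h)$.

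Given the factorization, one writes $|F_{\varepsilon X}|^p J_{\varepsilon X}^p = (|F_{\varepsilon X}|^{2p} J_{\varepsilon X})^{1/2}(J_{\varepsilon X}^{2p-1})^{1/2}$, takes the supremum over $|\varepsilon|\le 1$, and applies Cauchy--Schwarz first to the $dc$-integral and then to the expectation to obtain
\[
\mathbb{E}\!\left[\int_C dc \sup_{|\varepsilon|\le 1}\!\left|\tfrac{d}{d\varepsilon}J_{\varepsilon X}\right|^p\right] \le \sqrt{\mathbb{E}\!\left[\int_C dc \sup_{|\varepsilon|\le 1}|F_{\varepsilon X}|^{2p} J_{\varepsilon X}\right]} \sqrt{\mathbb{E}\!\left[\int_C dc \sup_{|\varepsilon|\le 1} J_{\varepsilon X}^{2p-1}\right]}.
\]
The first factor is finite by Lemma \ref{h.the.7.5} with exponent $2p$, and the second is finite by Proposition \ref{h.the.7.2} with exponent $2p-1$.

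The main obstacle is the first step: writing the $\varepsilon$-derivative in the precise form $F_{\varepsilon X}J_{\varepsilon X}$ with $F$ in the polynomial class admitted by Lemma \ref{h.the.7.5}. This requires the explicit chain-rule computation above and a verification that $(B,c) \mapsto (B-\varepsilon\mathbf{h},c_\varepsilon)$ really behaves like a one-parameter flow so that the $\varepsilon$-derivative commutes with the shift, for which $\omega(h,h) = 0$ is the key algebraic input. Once the factorization is in hand, the Cauchy--Schwarz step is routine.
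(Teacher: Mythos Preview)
Your proposal is correct and follows essentially the same approach as the paper. The paper also establishes the factorization $\frac{d}{d\varepsilon}J_{\varepsilon X}=J_{\varepsilon X}\cdot(\text{polynomial})$, computing the derivative directly at general $\varepsilon$ via the formulas $\frac{d}{d\varepsilon}\rho_T(B-\varepsilon\mathbf{h})^{-1}=2\rho_T(B-\varepsilon\mathbf{h})^{-1}\rho_T(B-\varepsilon\mathbf{h},\mathbf{h})\rho_T(B-\varepsilon\mathbf{h})^{-1}$ and the Jacobi identity for the determinant, arriving at the same multiplier you found (your route via the derivative at $\varepsilon=0$ together with the flow property $J_{(\varepsilon+u)X}(B,c)=J_{uX}(B-\varepsilon\mathbf{h},c_\varepsilon)$ is equivalent and equally valid). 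The paper then simply says ``the proof follows by Proposition \ref{h.the.7.2} and Lemma \ref{h.the.7.5}'' without spelling out how to pass from $|F_{\varepsilon X}|^pJ_{\varepsilon X}$ to $|F_{\varepsilon X}|^pJ_{\varepsilon X}^p$; your Cauchy--Schwarz splitting makes this step explicit and is a clean way to fill that gap. One small point: your $F$ contains the constant vector $z$, which is not literally among the vectors listed in the hypothesis of Lemma \ref{h.the.7.5}, but the proof of that lemma goes through verbatim with $z$ adjoined (the paper itself relies on this extension in Remark \ref{r.7.5} and Lemma \ref{h.the.7.8}).
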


\begin{proof}
First note that for any $\mathbf{x},\mathbf{h}\in C([0,T],H)$,
\begin{align}
\label{e.a1}\frac{d}{d\varepsilon}\rho_{T}(\mathbf{x}-\varepsilon
\mathbf{h})^{-1}  &  =2\rho_{T}(\mathbf{x}-\varepsilon\mathbf{h})^{-1}\rho
_{T}(\mathbf{x}-\varepsilon\mathbf{h},\mathbf{h})\rho_{T}(\mathbf{x}%
-\varepsilon\mathbf{h})^{-1}%
\end{align}
and
\begin{multline}
\label{e.a2}\frac{d}{d\varepsilon} \det(\rho_{T}(\mathbf{x}-\varepsilon
\mathbf{h}))^{-1/2}\\
=\frac{1}{\sqrt{{\det(\rho_{T}(\mathbf{x}-\varepsilon\mathbf{h}))}}}%
\cdot\operatorname{tr}\left( \rho_{T}(\mathbf{x}-\varepsilon\mathbf{h}%
)^{-1}\rho_{T}(\mathbf{x}-\varepsilon\mathbf{h},\mathbf{h})\right) .
\end{multline}
Thus
\[
\frac{d}{d\varepsilon}J_{\varepsilon X}(B,c)=J_{\varepsilon X}(B,c)\cdot
\left\{  \varphi_{\varepsilon}(B,c)+\mathrm{tr}\left(  \rho_{T}(B-\varepsilon
\mathbf{h})^{-1}\rho_{T}(B-\varepsilon\mathbf{h},\mathbf{h})\right)  \right\}
\]
where
\begin{align*}
\varphi_{\varepsilon}(B,c)  &  =\frac{d}{d\varepsilon}\left\{ - \frac{1}%
{2}\rho_{T}(B-\varepsilon\mathbf{h})^{-1}\left(  c-\varepsilon V\right)
\cdot\left(  c-\varepsilon V \right)  \right\} \\
&  =- \rho_{T}(B-\varepsilon\mathbf{h})^{-1}\rho_{T}(B-\varepsilon
\mathbf{h},\mathbf{h})\rho_{T}(B-\varepsilon\mathbf{h})^{-1}\left(
c-\varepsilon V \right)  \cdot\left(  c-\varepsilon V \right) \\
&  \qquad\qquad+\rho_{T}(B-\varepsilon\mathbf{h})^{-1} V \cdot\left(
c-\varepsilon V\right)  .
\end{align*}
where we again have taken $V=z+\frac{1}{2}\omega(B_{T},h)$. We have also used
the symmetry of $\rho_{T}(\mathbf{x})$ to simplify the above expression. Thus,
the proof follows by Proposition \ref{h.the.7.2} and Lemma \ref{h.the.7.5}.
\end{proof}

\begin{remark}
\label{r.7.5} From computations like (\ref{e.a1}) and (\ref{e.a2}), it is
clear that, for $X=(h,z)\in\mathfrak{g}_{CM}$ and $F=F(B,c)$ a polynomial as
in Lemma \ref{h.the.7.5}, the function $\tilde{\mathbf{X}}F$ is again a
polynomial in $\rho_{T}(B)^{-1},\rho_{T}(B,\mathbf{h}_{i}),\rho_{T}%
(B,\mathbf{h})$, $\omega(B_{T},h_{i})$, $\omega(B_{T},h)$, $\omega(h,h_{i})$,
$z$, and $c$. Thus, Lemma \ref{h.the.7.5} is sufficient to show that under the
same assumptions
\begin{multline*}
\mathbb{E}\left[  \int_{C} dc\, \sup_{|\varepsilon|\le1} \left\{  \left|
\frac{d}{d \varepsilon} F_{\varepsilon X}(B,c) \right|  ^{p} J_{\varepsilon
X}(B,c) \right\}  \right] \\
= \mathbb{E}\left[  \int_{C} dc\, \sup_{|\varepsilon|\le1} \left\{  \left|
(\tilde{\mathbf{X}}F)_{\varepsilon X}(B,c) \right|  ^{p} J_{\varepsilon
X}(B,c) \right\}  \right]  <\infty.
\end{multline*}

\end{remark}

\begin{lemma}
\label{h.the.7.8} For any $m\in\mathbb{N}$ and $X_{1},\ldots,X_{m}%
\in\mathfrak{g}_{CM}$, $\tilde{\mathbf{X}}_{m}\cdots\tilde{\mathbf{X}}_{1}\log
J^{0}_{T}(B,c)$ is a polynomial as in Lemma \ref{h.the.7.5}. In particular, we
have that $\tilde{\mathbf{X}}_{m}\cdots\tilde{\mathbf{X}}_{1}\log J^{0}%
_{T}(B,c)\in L^{\infty-}(J_{T}^{0}(B,c)\,d\mathbb{P}\, dc )$.
\end{lemma}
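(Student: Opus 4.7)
The plan is to proceed by induction on $m$ and then deduce integrability as a corollary of the resulting polynomial identification. From the explicit formula
\[
\log J_T^0(B,c) = -\tfrac{1}{2}\,\rho_T(B)^{-1}c\cdot c \;-\; \tfrac{1}{2}\log\det\bigl(2\pi\rho_T(B)\bigr),
\]
the base case $m=1$ reduces to a direct computation. Using (\ref{e.a1}) to differentiate $\rho_T(B-\varepsilon\mathbf{h}_1)^{-1}$, using $\log\det = \operatorname{tr}\log$ together with (\ref{e.a2}) for the log-determinant term (which contributes $\operatorname{tr}(\rho_T(B)^{-1}\rho_T(B,\mathbf{h}_1))$ at $\varepsilon=0$), and differentiating the shifted argument $c - \varepsilon z_1 - \tfrac{\varepsilon}{2}\omega(B_T, h_1)$, I get an expression that is a polynomial in the matrices $\rho_T(B)^{-1}$ and $\rho_T(B, \mathbf{h}_1)$, in the vectors $c$ and $\omega(B_T, h_1)$, plus the constants $z_1$ and $\rho_T(\mathbf{h}_1)$. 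This places $\tilde{\mathbf{X}}_1\log J_T^0(B,c)$ into the class of Definition \ref{h.the.7.3}.

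For the inductive step, suppose the result holds for $m-1$, and set $F := \tilde{\mathbf{X}}_{m-1}\cdots\tilde{\mathbf{X}}_1\log J_T^0(B,c)$. I would then invoke Remark \ref{r.7.5} directly: it asserts precisely that applying the translation-derivative $\tilde{\mathbf{X}}_m$ to any polynomial in the admissible ingredients yields a new polynomial, with the admissible list enlarged by $\rho_T(B, \mathbf{h}_m)$, $\omega(B_T, h_m)$, and the constants $\omega(h_m, h_i)$, $\rho_T(\mathbf{h}_m, \mathbf{h}_i)$, $z_m$. Since sums and products preserve this class, the inductive hypothesis propagates from $m-1$ to $m$, and we conclude that $\tilde{\mathbf{X}}_m\cdots\tilde{\mathbf{X}}_1\log J_T^0(B,c)$ is a polynomial in the sense of Lemma \ref{h.the.7.5}.

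For the $L^{\infty-}$ statement, once the expression has been identified as such a polynomial $F = F(B,c)$, I apply Lemma \ref{h.the.7.5} with the trivial direction $X = 0$ (equivalently, evaluating its supremum at the single value $\varepsilon = 0$) to obtain
\[
\mathbb{E}\int_C |F(B,c)|^p\, J_T^0(B,c)\,dc < \infty
\]
for every $p \in [1, \infty)$, which is precisely the assertion $F \in L^{\infty-}(J_T^0(B,c)\,d\mathbb{P}\,dc)$.

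The main obstacle, such as it is, is bookkeeping in the inductive step: one must verify that products like $\rho_T(B)^{-1}\rho_T(B, \mathbf{h}_m)\rho_T(B)^{-1}$ produced by (\ref{e.a1}) and the trace terms produced by (\ref{e.a2}) fit inside the template of Definition \ref{h.the.7.3}, which permits only sums of products of matrix-chains contracted with pairs of vectors and traces of matrix-chains. This is straightforward but requires a careful enumeration; once handled, the substantive analytic work has already been absorbed into Remark \ref{r.7.5} and Lemma \ref{h.the.7.5}.
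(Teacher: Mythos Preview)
Your proposal is correct and follows essentially the same approach as the paper: the paper computes the $m=1$ case explicitly (obtaining the same polynomial expression you describe) and then invokes Remark~\ref{r.7.5} and Lemma~\ref{h.the.7.5} to handle the iteration and the integrability, just as you do. Your write-up is slightly more explicit about the induction and about tracking the harmless constants ($z_i$, $\omega(h_j,h_i)$, $\rho_T(\mathbf{h}_j,\mathbf{h}_i)$) that accumulate, but the substance is identical.
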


\begin{proof}
For $m=1$, this is essentially Proposition \ref{h.the.7.6}. In particular, the
computations in the proof of Proposition \ref{h.the.7.6} imply that, for
$X=(h,z)\in\mathfrak{g}_{CM}$,
\begin{multline*}
\label{h.equ.7.1}\tilde{\mathbf{X}}\log J^{0}_{T}(B,c) =-\rho_{T}(B)^{-1}%
\rho_{T}(B,\mathbf{h})\rho_{T}(B)^{-1}c\cdot c\\
+\rho_{T}(B)^{-1}c\cdot\left(  z+\frac{1}{2}\omega(B_{T},h)\right)
+\mathrm{tr}\left(  \rho_{T}(B)^{-1}\rho_{T}(B,\mathbf{h})\right)  .
\end{multline*}
In particular, $\tilde{\mathbf{X}}\log J^{0}_{T}(B,c)$ is a polynomial in
$\rho_{T}(B)^{-1},\rho_{T}(B,h)$ and $c,z,\omega(B_{T},h)$. Then the result
follows from Lemma \ref{h.the.7.5} and Remark \ref{r.7.5}.
\end{proof}

\begin{definition}
\label{h.the.7.11} For all $(x,c)\in G$, let $\Vert(x,c)\Vert_{\mathfrak{g}%
}:=\Vert x\Vert_{W}+| c|_{C}$. A function $F:G\rightarrow\mathbb{R}$
is \emph{polynomially bounded} if there exist constants $K,M<\infty$ such
that
\[
|F(g)|\leq K\left(  1+\Vert g\Vert_{\mathfrak{g}}\right)  ^{M}%
\]
for all $g\in G$.
Given $X\in\mathfrak{g}_{CM}$, we say $F$ \emph{is left $X$-differentiable}
if
\[
(\tilde{X}F)(g):=\frac{d}{d\varepsilon}\bigg|_{0}F(g\cdot\varepsilon X)
\]
exists for all $g\in G$. We will say that $F$ is \emph{smooth} if $(\tilde
{X}_{1}\cdots\tilde{X}_{m}F)(g)$ exists for all $m\in\mathbb{N}$,
$X_{1},\ldots,X_{m}\in\mathfrak{g}_{CM}$, and $g\in G$.
\end{definition}

\begin{theorem}
\label{h.the.7.12-1} Let $X=(h,z)\in\mathfrak{g}_{CM}$, $F:G\rightarrow
\mathbb{R}$ be left $X$-differentiable such that $F$ and $\tilde{X}F$ are
polynomially bounded, and $\Psi=\Psi(B,c)$ be a polynomial as in Lemma
\ref{h.the.7.5}. Then
\begin{multline}
\label{e.8.9}\int_{C} dc\, \mathbb{E}[(\tilde{X}F)(B_{T},c)\Psi(B,c)J^{0}%
_{T}(B,c)]\\
= \int_{C} \mathbb{E}[F(B_{T},c)(\tilde{\mathbf{X}}^{*}\Psi)(B,c)J^{0}%
_{T}(B,c)]
\end{multline}
where
\begin{align*}
\tilde{\mathbf{X}}^{*}(B,c) := \tilde{\mathbf{X}}(B,c) + \tilde{\mathbf{X}%
}\log J^{0}_{T}(B,c) + \langle h,B_{T}\rangle
\end{align*}
for $\tilde{\mathbf{X}}$ as in Notation \ref{h.the.7.4}. Furthermore, for any
$X_{1},\ldots,X_{m}\in\mathfrak{g}_{CM}$,
\begin{equation}
\label{e.8.10}\mathbb{E}[(\tilde{X}_{1}\cdots\tilde{X}_{m}F)(g_{T})] \\
= \int_{C} \mathbb{E}[F(B_{T},c)(\tilde{\mathbf{X}}^{*}_{1}\cdots
\tilde{\mathbf{X}}^{*}_{m}1)(B,c)J^{0}_{T}(B,c)].
\end{equation}
where $(\tilde{\mathbf{X}}^{*}_{1}\cdots\tilde{\mathbf{X}}^{*}_{m}1)(B,c)\in
L^{\infty-}(J_{T}^{0}(B,c)\,d\mathbb{P}\,dc)$.
\end{theorem}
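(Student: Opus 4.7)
The plan is to derive (\ref{e.8.9}) by differentiating both sides of Corollary \ref{h.the.6.4}, with $g=\varepsilon X$, at $\varepsilon=0$. Under this substitution, the left side becomes $\int_{C}dc\,\mathbb{E}[F((B_{T},c)\cdot\varepsilon X)\Psi(B,c)J_{T}^{0}(B,c)]$; differentiating under the integral and using $\frac{d}{d\varepsilon}\big|_{0}F((B_{T},c)\cdot\varepsilon X)=(\tilde{X}F)(B_{T},c)$ recovers the left side of (\ref{e.8.9}). On the right side we have $\Psi_{\varepsilon X}(B,c)J_{\varepsilon X}(B,c)\bar{J}_{\varepsilon h}(B_{T})$; since $\Psi_{0}=\Psi$, $J_{0}=J_{T}^{0}$, and $\bar{J}_{0}=1$, the product rule produces the three contributions $(\tilde{\mathbf{X}}\Psi)J_{T}^{0}$, $\Psi J_{T}^{0}(\tilde{\mathbf{X}}\log J_{T}^{0})$ (using $\tilde{\mathbf{X}}J_{T}^{0}=J_{T}^{0}\,\tilde{\mathbf{X}}\log J_{T}^{0}$), and $\Psi J_{T}^{0}\cdot\frac{d}{d\varepsilon}\big|_{0}\bar{J}_{\varepsilon h}(B_{T})$. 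Repackaging these as $(\tilde{\mathbf{X}}^{*}\Psi)J_{T}^{0}$ gives (\ref{e.8.9}).

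The main technical obstacle is justifying the exchange of $\frac{d}{d\varepsilon}\big|_{0}$ with $\int_{C}dc\,\mathbb{E}[\cdot]$, which I would carry out via dominated convergence. The interchange on the left side is straightforward given that $F$ and $\tilde{X}F$ are polynomially bounded and Proposition \ref{fernique-theorem} provides Gaussian-type tails for $g_{T}$. For the right side, the product rule produces several summands, each of which must be dominated uniformly for $|\varepsilon|\le 1$. The required uniform estimates are precisely those already assembled: Proposition \ref{h.the.7.1} controls $\bar{J}_{\varepsilon h}(B_{T})$ and its $\varepsilon$-derivative; Propositions \ref{h.the.7.2} and \ref{h.the.7.6} control $J_{\varepsilon X}$ and its derivative; and Lemma \ref{h.the.7.5} together with Remark \ref{r.7.5} controls $\Psi_{\varepsilon X}$ and $(\tilde{\mathbf{X}}\Psi)_{\varepsilon X}$, since $\tilde{\mathbf{X}}$ preserves the polynomial class. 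A H\"older inequality then combines these bounds with the polynomial bound on $F(B_{T},c)$ (absorbed via Fernique's theorem) to supply an integrable dominating function.

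For the iterated identity (\ref{e.8.10}), I would induct by peeling off one vector field at a time. Writing $\mathbb{E}[(\tilde{X}_{1}\cdots\tilde{X}_{m}F)(g_{T})]=\int_{C}dc\,\mathbb{E}[(\tilde{X}_{1}(\tilde{X}_{2}\cdots\tilde{X}_{m}F))(B_{T},c)J_{T}^{0}(B,c)]$ via Theorem \ref{J0-formula}, I would apply (\ref{e.8.9}) with $X=X_{1}$, $\Psi=1$, and $F$ replaced by $\tilde{X}_{2}\cdots\tilde{X}_{m}F$, converting $\tilde{X}_{1}$ into a factor $\tilde{\mathbf{X}}_{1}^{*}1$. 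Repeated application shifts all the $\tilde{X}_{j}$ across, yielding (\ref{e.8.10}). The consistency check at each stage is that the accumulated $\Psi$ still lies in the polynomial class of Lemma \ref{h.the.7.5} so that (\ref{e.8.9}) can be reapplied: this follows because $\tilde{\mathbf{X}}^{*}1=\tilde{\mathbf{X}}\log J_{T}^{0}+\frac{d}{d\varepsilon}\big|_{0}\bar{J}_{\varepsilon h}(B_{T})$ is a polynomial (Lemma \ref{h.the.7.8}) plus a Gaussian linear functional of $B_{T}$ treated in the same way as the terms $\omega(B_{T},h_{i})$ appearing in the proof of Lemma \ref{h.the.7.5}, and because $\tilde{\mathbf{X}}^{*}$ preserves this enlarged class by Remark \ref{r.7.5}. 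The $L^{\infty-}$ integrability of $(\tilde{\mathbf{X}}^{*}_{1}\cdots\tilde{\mathbf{X}}^{*}_{m}1)$ with respect to $J_{T}^{0}\,d\mathbb{P}\,dc$ is then an immediate consequence of Lemma \ref{h.the.7.5} applied to this polynomial.
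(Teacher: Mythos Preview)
Your proposal is correct and follows essentially the same route as the paper: differentiate the identity of Corollary \ref{h.the.6.4} at $\varepsilon=0$, justify the interchange by the uniform estimates of Propositions \ref{h.the.7.1}, \ref{h.the.7.2}, \ref{h.the.7.6} and Lemma \ref{h.the.7.5}/Remark \ref{r.7.5}, and then iterate using Lemma \ref{h.the.7.8}. One small imprecision: for the left-hand interchange you invoke Proposition \ref{fernique-theorem}, but that controls integrability against $\nu_T$, whereas here the integrand carries the extra path-dependent factor $\Psi(B,c)$; the paper instead bounds $\sup_{|\varepsilon|\le 1}|(\tilde{X}F)((B_T,c)\cdot\varepsilon X)|$ by a polynomial in $\|(B_T,c)\|_{\mathfrak g}$ and combines this with Lemma \ref{h.the.7.5} (via H\"older) exactly as you do on the right side.
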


\begin{proof}
Since $\tilde{X}F$ is polynomially bounded, there exist $K,M<\infty$ such
that
\begin{multline*}
\sup_{|\varepsilon|\leq1}\left\vert \frac{d}{d\varepsilon}F((B_{T}%
,c)\cdot\varepsilon X)\right\vert =\sup_{|\varepsilon|\leq1}\left\vert
(\tilde{X}F)((B_{T},c)\cdot\varepsilon X)\right\vert \\
\leq\sup_{|\varepsilon|\leq1}K\left(  1+\Vert(B_{T},c)\cdot\varepsilon
X\Vert_{\mathfrak{g}}\right)  ^{M}\leq C(X)\left(  1+\Vert(B_{T}%
,c)\Vert_{\mathfrak{g}}\right)  ^{M},
\end{multline*}
where this last expression is in $L^{\infty-}(J^{0}_{T}(B,c)\,d\mathbb{P}%
\,dc)$ again by Fernique's theorem and arguments similar to those in the proof
of Proposition \ref{h.the.7.2}. Then this implies that
\begin{align*}
\int_{C}\,dc\,\mathbb{E}\Big[ (\tilde{X}F)(B_{T},c) & \Psi(B,c)J^{0}_{T}(B,c)
\Big]\\
&  =\int_{C}\,dc\,\mathbb{E}\left[  \frac{d}{d\varepsilon}\bigg|_{0}%
F((B_{T},c)\cdot\varepsilon X)\Psi(B,c)J^{0}_{T}(B,c) \right] \\
&  =\frac{d}{d\varepsilon}\bigg|_{0}\int_{C}\,dc\,\mathbb{E}\left[
F((B_{T},c)\cdot\varepsilon X)\Psi(B,c)J^{0}_{T}(B,c) \right]  .
\end{align*}
Now applying Corollary \ref{h.the.6.4} gives that
\begin{align*}
\int_{C}\, & dc\,\mathbb{E}\Big[ (\tilde{X}F)(B_{T},c)\Psi(B,c)J^{0}_{T}(B,c)
\Big]\\
&  =\frac{d}{d\varepsilon}\bigg|_{0}\int_{C}\,dc\,\mathbb{E}\left[
F(B_{T},c)\Psi_{\varepsilon X}(B,c)J_{\varepsilon X}(B,c)\bar{J}_{\varepsilon
h}(B_{T})\right] \\
&  =\int_{C}\,dc\,\mathbb{E}\bigg[ F(B_{T},c)\bigg( (\tilde{\mathbf{X}}%
\Psi)(B,c) J^{0}_{T}(B,c)\\
&  \qquad\qquad\qquad\qquad\qquad+ \Psi(B,c) \left(  \tilde{\mathbf{X}}%
J^{0}_{T}(B,c)+J^{0}_{T}(B,c) \frac{d}{d\varepsilon}\bigg|_{0}\bar
{J}_{\varepsilon h}(B_{T})\right)  \bigg) \bigg]\\
&  =\int_{C}\,dc\,\mathbb{E}\left[  F(B_{T},c)\left(  \tilde{\mathbf{X}} +
\tilde{\mathbf{X}}\log J^{0}_{T}(B,c)+\langle h,B_{T}\rangle\right)  \Psi(B,c)
J^{0}_{T}(B,c) \right]
\end{align*}
where this second interchange of differentiation and integration is justified
by Propositions \ref{h.the.7.1} and \ref{h.the.7.6}, Lemma \ref{h.the.7.5},
and Remark \ref{r.7.5}. This completes the proof of (\ref{e.8.9}).

Now, equation (\ref{e.8.9}) implies that that
\begin{align*}
\mathbb{E}[(\tilde{X}_{1}\cdots\tilde{X}_{m}F)(g_{T})]  &  = \int_{C}
\,dc\,\mathbb{E}\left[  (\tilde{X}_{2}\cdots\tilde{X}_{m}F)(B_{T}%
,c)(\tilde{\mathbf{X}}_{1}^{*}1)(B,c) J^{0}_{T}(B,c) \right]
\end{align*}
where $(\tilde{\mathbf{X}}_{1}^{*}1)(B,c)=\tilde{\mathbf{X}}_{1}\log J^{0}%
_{T}(B,c) + \langle h_{1},B_{T}\rangle$. By Lemma \ref{h.the.7.8},
$\tilde{\mathbf{X}}_{1}\log J^{0}_{T}(B,c)$ is a polynomial as in Lemma
\ref{h.the.7.5}. We also have that $\langle h_{1},B_{T}\rangle$ is Gaussian,
and for $X=(h,z)\in\mathfrak{g}_{CM}$, $\tilde{X}\langle h_{1},x\rangle
=\partial_{h}\langle h_{1},x\rangle=\langle h_{1},h\rangle$. Thus, we may
again use (\ref{e.8.9}) and Proposition \ref{h.the.7.1}, along with Lemmas
\ref{h.the.7.5} and \ref{h.the.7.8} and Remark \ref{r.7.5}, and iterative
applications of these gives the desired result. The integrability follows from
Proposition \ref{h.the.7.1} and Lemma \ref{h.the.7.8}.
\end{proof}

The previous theorem and its proof give the following.

\begin{corollary}
[Integration by parts]\label{h.the.7.12-2} Let $X=(h,z)\in\mathfrak{g}_{CM}$
and $F_{1},F_{2}:G\rightarrow\mathbb{R}$ be left $X$-differentiable such that
$F_{1}, F_{2}, \tilde{X}F_{1}, \tilde{X}F_{2}$ are polynomially bounded. Then
\[
\mathbb{E}[(\tilde{X}F_{1})(g_{T})F_{2}(g_{T})]  = \mathbb{E}[F_{1}%
(g_{T})(\tilde{X}^{*}F_{2})(g_{T})]
\]
where
\begin{align*}
\tilde{X}^{*}(x,c) := -\tilde{X}(x,c) + \mathbb{E}[(\tilde{\mathbf{X}}\log
J_{T}^{0})(B,c) J_{T}^{0}(B,c)\,|\,B_{T}=x]\gamma_{T}^{-1}(x,c) +\langle
h,x\rangle.
\end{align*}

\end{corollary}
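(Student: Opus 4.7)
The plan is to specialize Theorem \ref{h.the.7.12-1}, equation (\ref{e.8.9}), to the case $\Psi(B,c)=F_2(B_T,c)$, and then collapse the conditional expectation that appears in the definition of $\tilde{X}^{*}$ via the tower property and Theorem \ref{J0-formula}.

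First I would verify that (\ref{e.8.9}) remains valid when we take $\Psi(B,c)=F_2(B_T,c)$, even though this is not literally a polynomial of the type demanded in Lemma \ref{h.the.7.5}. Revisiting the proof of Theorem \ref{h.the.7.12-1}, the only uses of this polynomial structure are the dominated-convergence bounds obtained from Lemma \ref{h.the.7.5} and Remark \ref{r.7.5} to justify interchange of $d/d\varepsilon$ with the integral $\int_C dc\,\mathbb E$. For $\Psi(B,c)=F_2(B_T,c)$ and $\tilde{\mathbf X}\Psi=-(\tilde X F_2)(B_T,c)$, the polynomial-boundedness hypotheses on $F_2$ and $\tilde X F_2$ yield estimates of the form
\[
\sup_{|\varepsilon|\le 1}\bigl|F_2((B_T,c)\cdot\varepsilon X)\bigr|
\le K\bigl(1+\|B_T\|_W+|c|_C\bigr)^{M},
\]
and likewise for $\tilde X F_2$. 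Fernique's theorem together with the moment estimates on $J^0_T$ implicit in Proposition \ref{h.the.7.6} and the proof of Proposition \ref{h.the.7.2} (which handle the $|c|_C^M$ factor integrated against $J_{\varepsilon X}$) supply the required $L^{\infty-}$ domination, so the differentiation-under-the-integral step goes through unchanged.

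Granting this, applying (\ref{e.8.9}) with $\Psi=F_2$ and recalling from Notation \ref{h.the.7.4} that $\tilde{\mathbf X}F_2(B_T,c)=-\tilde X F_2(B_T,c)$ gives
\begin{align*}
\mathbb E\bigl[(\tilde X F_1)(g_T)F_2(g_T)\bigr]
&=\int_C dc\,\mathbb E\bigl[(\tilde X F_1)(B_T,c)F_2(B_T,c)J^0_T(B,c)\bigr]\\
&=\int_C dc\,\mathbb E\bigl[F_1(B_T,c)F_2(B_T,c)
\bigl(-\tilde X F_2/F_2 + \tilde{\mathbf X}\log J^0_T+\langle h,B_T\rangle\bigr) J^0_T\bigr],
\end{align*}
where I first invoked Theorem \ref{J0-formula} on the left side; in the second line the ratio is purely symbolic--what I actually get is
\[
\int_C dc\,\mathbb E\Bigl[F_1(B_T,c)\Bigl(-(\tilde X F_2)(B_T,c)+F_2(B_T,c)\bigl[(\tilde{\mathbf X}\log J^0_T)(B,c)+\langle h,B_T\rangle\bigr]\Bigr)J^0_T(B,c)\Bigr].
\]

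The third step is to condition the middle term on $B_T$. Since $F_1(B_T,c)$, $F_2(B_T,c)$, and $\langle h,B_T\rangle$ are $\sigma(B_T)$-measurable and $\gamma_T(x,c)=\mathbb E[J^0_T(B,c)\mid B_T=x]$ by Corollary \ref{gamma-formula}, the tower property gives
\begin{align*}
&\mathbb E\bigl[F_1(B_T,c)F_2(B_T,c)(\tilde{\mathbf X}\log J^0_T)(B,c)J^0_T(B,c)\bigr]\\
&\qquad=\mathbb E\Bigl[F_1(B_T,c)F_2(B_T,c)\,\gamma_T^{-1}(B_T,c)\,\mathbb E\bigl[(\tilde{\mathbf X}\log J^0_T)(B,c)J^0_T(B,c)\,\bigm|\,B_T\bigr]\,\gamma_T(B_T,c)\Bigr],
\end{align*}
and now applying Theorem \ref{J0-formula} once more (in reverse) to each of the three resulting terms packages everything as $\mathbb E[F_1(g_T)(\tilde X^{*}F_2)(g_T)]$ with $\tilde X^{*}$ exactly as in the statement. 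The integrability needed to invoke Theorem \ref{J0-formula} in this direction follows from Lemma \ref{h.the.7.8} (giving $\tilde{\mathbf X}\log J^0_T\in L^{\infty-}(J^0_T\,d\mathbb P\,dc)$), Proposition \ref{h.the.7.1} (for the $\langle h,B_T\rangle$ term), and polynomial boundedness of $F_1, F_2,\tilde X F_2$.

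The main obstacle is Step 1: transferring (\ref{e.8.9}) from polynomial $\Psi$ to a merely polynomially bounded $F_2(B_T,c)$. This is not deep, but it does require tracking through the proof of Theorem \ref{h.the.7.12-1} to see that all invoked $L^{p}$-bounds on $\Psi$ and $\tilde{\mathbf X}\Psi$ were in fact consequences of polynomial growth together with the heavy lifting already done in Sections \ref{rhoT-sec} and \ref{h.sub.7.1}; once this is observed the rest is bookkeeping.
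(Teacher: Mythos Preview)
Your proposal is correct and follows essentially the same route as the paper: apply Theorem~\ref{J0-formula}, then invoke (the proof of) Theorem~\ref{h.the.7.12-1} with $\Psi(B,c)=F_2(B_T,c)$, and finally condition on $B_T$ to produce the $\gamma_T^{-1}$ factor in $\tilde X^{*}$. You are more explicit than the paper on two points it leaves tacit---why equation~(\ref{e.8.9}) still applies when $\Psi$ is merely polynomially bounded rather than a polynomial in the sense of Lemma~\ref{h.the.7.5}, and how the tower property converts the $J^0_T$-integral into the stated conditional-expectation form---but these are exactly the details the paper intends when it writes ``the previous theorem and its proof give the following.''
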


\begin{proof}
By Theorems \ref{J0-formula} and \ref{h.the.7.12-1}, we have that
\begin{align*}
\mathbb{E}[(\tilde{X}F_{1})(g_{T})F_{2}(g_{T})]  & = \int_{C} dc\, \mathbb{E}[
(\tilde{X}F_{1})(B_{T},c)F_{2}(B_{T},c)J^{0}_{T}(B,c)]\\
& = \int_{C} dc\, \mathbb{E}[ F_{1}(B_{T},c)(\tilde{\mathbf{X}}^{*}%
F_{2})(B_{T},c)J^{0}_{T}(B,c)]\\
& = \int_{C} dc\, \mathbb{E}\Big[F_{1}(B_{T},c)\Big(-(\tilde{X}F_{2}%
)(B_{T},c)\\
& \qquad\quad+ F_{2}(g_{T})\left(  ( \tilde{\mathbf{X}}\log J_{T}^{0})(B,c)+
\langle h,B_{T}\rangle\right) \Big)J^{0}_{T}(B,c)\Big].
\end{align*}

\end{proof}

Theorem \ref{h.the.7.12-1}, in particular, equation (\ref{e.8.10}), also
immediately gives the following result for higher order derivatives.

\begin{corollary}
\label{h.the.7.12} Let $m\in\mathbb{N}$ and $X_{1},\ldots,X_{m}\in
\mathfrak{g}_{CM}$. Then, for any smooth $F:G\rightarrow\mathbb{R}$ such that
$F$ and all of its derivatives are polynomially bounded,
\begin{align}
\label{e.8.12}\mathbb{E}[(\tilde{X}_{1}\cdots\tilde{X}_{m}F)(g_{T})] =
\mathbb{E}[F(g_{T})\psi^{X_{m},\ldots,X_{1}}(g_{T})],
\end{align}
where
\begin{align*}
\psi^{X_{m},\ldots,X_{1}}(x,c)  & = \mathbb{E}\left[  (\tilde{\mathbf{X}}%
_{m}^{*}\cdots\tilde{\mathbf{X}}_{1}^{*}1)(B,c) J^{0}_{T}(B, c)\,\big|\,
B_{T}=x\right]  \gamma_{T}^{-1}(x,c)
\end{align*}
and $\psi^{X_{m},\ldots,X_{1}}\in L^{\infty-}(\nu_{T})$.
\end{corollary}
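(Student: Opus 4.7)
The plan is to deduce (\ref{e.8.12}) directly from (\ref{e.8.10}) by conditioning on $B_T$ and invoking the disintegration $\nu_T(dx,dc) = \gamma_T(x,c)\,\mu_T(dx)\,dc$ from Corollary \ref{gamma-formula}; the $L^{\infty-}(\nu_T)$ assertion will then follow from conditional Jensen applied on the tilted probability measure $dQ := J_T^0(B,c)\,d\mathbb{P}\,dc$ on $\Omega\times C$.

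First I would apply (\ref{e.8.10}), using the ordering $\tilde{\mathbf{X}}_m^{*}\cdots\tilde{\mathbf{X}}_1^{*}$ produced by successively peeling off the leftmost $\tilde X_k$ in the iteration of (\ref{e.8.9}). Then the tower property, together with the fact that $F(B_T,c)$ depends on $B$ only through $B_T$, lets me rewrite the right-hand side as
\[
\int_C dc\int_W F(x,c)\,\mathbb{E}\bigl[(\tilde{\mathbf{X}}_m^{*}\cdots\tilde{\mathbf{X}}_1^{*}1)(B,c)\,J_T^0(B,c)\bigm| B_T=x\bigr]\,\mu_T(dx).
\]
By the definition of $\psi^{X_m,\ldots,X_1}$ the inner conditional expectation equals $\psi^{X_m,\ldots,X_1}(x,c)\,\gamma_T(x,c)$, and Corollary \ref{gamma-formula} tells us that $\gamma_T(x,c)\,\mu_T(dx)\,dc$ is $\nu_T(dx,dc)$. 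This identifies the right-hand side with $\mathbb{E}[F(g_T)\,\psi^{X_m,\ldots,X_1}(g_T)]$, which is (\ref{e.8.12}).

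For the integrability claim, note that Theorem \ref{J0-formula} with $F\equiv 1$ shows $Q$ is a probability measure on $\Omega\times C$ and that the pushforward of $Q$ under $(B_T,c)$ is exactly $\nu_T$. A short calculation using the defining formula of $\psi^{X_m,\ldots,X_1}$ gives
\[
\psi^{X_m,\ldots,X_1}(B_T,c) = \mathbb{E}_Q\bigl[(\tilde{\mathbf{X}}_m^{*}\cdots\tilde{\mathbf{X}}_1^{*}1)(B,c)\bigm| B_T,\,c\bigr].
\]
Conditional Jensen then yields, for every $p\in[1,\infty)$,
\[
\int_G |\psi^{X_m,\ldots,X_1}|^p\,d\nu_T \;\le\; \mathbb{E}_Q\bigl[|\tilde{\mathbf{X}}_m^{*}\cdots\tilde{\mathbf{X}}_1^{*}1|^p\bigr] \;=\; \int_C dc\,\mathbb{E}\bigl[|\tilde{\mathbf{X}}_m^{*}\cdots\tilde{\mathbf{X}}_1^{*}1|^p\,J_T^0(B,c)\bigr],
\]
which is finite by the $L^{\infty-}$ statement at the end of Theorem \ref{h.the.7.12-1}.

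The main obstacle I would have expected, namely establishing $L^{\infty-}$ integrability of a conditional expectation of a complicated polynomial in $\rho_T(B)^{-1}$, $\rho_T(B,\mathbf{h}_i)$, $c$, and $\omega(B_T,h_i)$, has effectively been handled already by the iteration of Lemma \ref{h.the.7.5}, Lemma \ref{h.the.7.8}, and Remark \ref{r.7.5} carried out in the proof of Theorem \ref{h.the.7.12-1}. Given those ingredients, the remaining content of the corollary is the conditioning rearrangement above plus one application of Jensen, which is why the statement is labelled as immediate.
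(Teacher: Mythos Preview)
Your derivation of (\ref{e.8.12}) by conditioning on $B_T$ and invoking Corollary~\ref{gamma-formula} is exactly what the paper does; that part is identical.

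For the $L^{\infty-}(\nu_T)$ assertion, your argument is correct but differs from the paper's. You introduce the tilted probability $dQ = J_T^0(B,c)\,d\mathbb{P}\,dc$, recognize $\psi^{X_m,\ldots,X_1}(B_T,c)$ as a $Q$-conditional expectation, and apply conditional Jensen to bound $\int_G|\psi|^p\,d\nu_T$ directly by $\|\tilde{\mathbf{X}}_m^{*}\cdots\tilde{\mathbf{X}}_1^{*}1\|_{L^p(Q)}^p$. The paper instead establishes the H\"older-type bound
\[
\mathbb{E}\bigl|F(g_T)\psi^{X_m,\ldots,X_1}(g_T)\bigr| \le \|F\|_{L^p(\nu_T)}\,\|\tilde{\mathbf{X}}_m^{*}\cdots\tilde{\mathbf{X}}_1^{*}1\|_{L^q(J_T^0\,d\mathbb{P}\,dc)}
\]
for all smooth cylinder $F$, and then invokes $L^p$--$L^q$ duality (density of cylinder functions in $L^p(\nu_T)$) to conclude $\psi\in L^q(\nu_T)$. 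Your route is more direct: it avoids the duality step and gives the $L^p$ bound on $\psi$ in one stroke, with the same constant. The paper's route, on the other hand, makes the functional-analytic content explicit and would generalize more readily if one only had weak-type information about $\psi$. Both rest on the same input, namely the $L^{\infty-}(J_T^0\,d\mathbb{P}\,dc)$ integrability of $\tilde{\mathbf{X}}_m^{*}\cdots\tilde{\mathbf{X}}_1^{*}1$ from Theorem~\ref{h.the.7.12-1}.
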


\begin{proof}
Equation (\ref{e.8.12}) and the expression given for $\psi^{X_{m},\ldots
,X_{1}}$ are a direct consequence of Theorem \ref{h.the.7.12-1}. Now note that
the integrability of $\tilde{\mathbf{X}}_{m}^{*}\cdots\tilde{\mathbf{X}}%
_{1}^{*}1$ implies that $\psi^{X_{m},\ldots,X_{1}}\in L^{1}(\nu_{T})$ and
$F\psi^{X_{m},\ldots,X_{1}}\in L^{1}(\nu_{T})$ for any such $F$. More
particularly, for any fixed $p\in(1,\infty)$, we have that
\begin{align*}
\mathbb{E} & |F(g_{T})\psi^{X_{m},\ldots,X_{1}}(g_{T})|\\
& = \int_{C} dc\, \mathbb{E}\left[ \left| F(B_{T},c) \mathbb{E}\left[
(\tilde{\mathbf{X}}_{m}^{*}\cdots\tilde{\mathbf{X}}_{1}^{*}1)(B,c) J^{0}%
_{T}(B, c)\,\big|\, B_{T}\right] \right|  \gamma_{T}^{-1}(B_{T},c)J_{T}%
^{0}(B,c)\right] \\
& \le\int_{C} dc\, \mathbb{E}\left[ \left| F(B_{T},c) (\tilde{\mathbf{X}}%
_{m}^{*}\cdots\tilde{\mathbf{X}}_{1}^{*}1)(B,c)\right|  J^{0}_{T}(B, c)\right]
\\
& \le\|F\|_{L^{p}(\nu_{T})} \|\tilde{\mathbf{X}}_{m}^{*}\cdots\tilde
{\mathbf{X}}_{1}^{*}1\|_{L^{q}(J_{T}^{0}(B,c)\,d\mathbb{P}\,dc)}%
\end{align*}
where $q$ is the conjugate exponent to $p$. As this bound holds, for example,
for all smooth cylinder functions $F$ and these comprise a dense subspace of
$L^{p}(\nu_{T})$, this implies that $\psi^{X_{m},\ldots,X_{1}}$ represents a
bounded linear operator on $L^{p}(\nu_{T})$ and thus $\psi^{X_{m},\ldots
,X_{1}}\in(L^{p}(\nu_{T}))^{*}\cong L^{q}(\nu_{T})$. As this holds for any
$p\in(1,\infty)$, we have that $\psi^{X_{m},\ldots,X_{1}}\in\cap
_{q\in[1,\infty)} L^{q}(\nu_{T})$.
\end{proof}

Right integration by parts formulae may be proved directly, but also follow
from the left integration by parts formulae combined with the invariance of
the heat kernel measure under inversions. For the following corollary, let
$\hat{X}$ denote the right-invariant vector field given by
\[
\hat{X}F(g) = \partial_{(h,z)}(F\circ R_{g})(\mathbf{e})
\]
where $R_{g}$ is right translation by $g\in G$. A straightforward computation
shows that
\[
\hat{X}F(x,c) = \partial_{(h,z-\frac{1}{2}\omega(x,h))}F(x,c);
\]
(compare with Notation \ref{vector-fields} for left-invariant vector fields).

\begin{corollary}
\label{h.the.7.14} Under the hypotheses of Corollary \ref{h.the.7.12},
\[
\mathbb{E}[(\hat{X}_{1}\cdots\hat{X}_{m}F)(g_{T})] =\mathbb{E}[F(g_{T}%
)\hat{\psi}^{X_{m},\ldots,X_{1}}(g_{T})],
\]
where
\[
\hat{\psi}^{X_{m},\ldots,X_{1}}(g) := (-1)^{m}\psi^{X_{m},\ldots,X_{1}}%
(g^{-1}).
\]

\end{corollary}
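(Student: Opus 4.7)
The plan is to reduce to Corollary~\ref{h.the.7.12} via the inversion map $\iota(g)=g^{-1}$ and the inversion-invariance of $\nu_T$ (Corollary~\ref{inversion-invariant}). Write $F^{\vee}:=F\circ\iota$, so that $F^{\vee}(x,c)=F(-x,-c)$. The first step is to establish the vector-field identity
\[
\hat{X}F \;=\; -\,(\tilde{X} F^{\vee})^{\vee}, \qquad X\in\mathfrak{g}_{CM}.
\]
This follows because $G$ is step two nilpotent, so inversion on $G_{CM}$ is $(\varepsilon X)^{-1}=-\varepsilon X$, and hence
\[
\hat{X}F(g) \;=\; \frac{d}{d\varepsilon}\bigg|_{0}F(\varepsilon X\cdot g)
\;=\; \frac{d}{d\varepsilon}\bigg|_{0}F^{\vee}\!\left(g^{-1}\cdot(-\varepsilon X)\right)
\;=\; -\,(\tilde{X}F^{\vee})(g^{-1}).
\]
Iterating (and using that $\vee$ is an involution) gives, for every $m\geq 1$,
\[
\hat{X}_{1}\cdots\hat{X}_{m}F \;=\; (-1)^{m}\,\bigl(\tilde{X}_{1}\cdots\tilde{X}_{m}F^{\vee}\bigr)^{\vee}.
\]

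Next, I check that $F^{\vee}$ satisfies the hypotheses of Corollary~\ref{h.the.7.12}. Since $\|g^{-1}\|_{\mathfrak{g}}=\|g\|_{\mathfrak{g}}$, polynomial boundedness is preserved by $\vee$. Moreover, by the identity just established (applied with $\hat{X}$ and $\tilde{X}$ interchanged, which is symmetric in the step-two setting), the higher left-derivatives of $F^{\vee}$ coincide up to sign with the right-derivatives of $F$ composed with $\iota$, which are polynomially bounded by assumption. Thus $F^{\vee}$ is smooth in the sense of Definition~\ref{h.the.7.11} with all derivatives polynomially bounded.

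Combining the vector-field identity, the invariance $g_{T}\stackrel{d}{=}g_{T}^{-1}$ from Corollary~\ref{inversion-invariant}, and Corollary~\ref{h.the.7.12} applied to $F^{\vee}$, I would compute
\begin{align*}
\mathbb{E}\bigl[(\hat{X}_{1}\cdots\hat{X}_{m}F)(g_{T})\bigr]
&= (-1)^{m}\,\mathbb{E}\bigl[(\tilde{X}_{1}\cdots\tilde{X}_{m}F^{\vee})(g_{T}^{-1})\bigr] \\
&= (-1)^{m}\,\mathbb{E}\bigl[(\tilde{X}_{1}\cdots\tilde{X}_{m}F^{\vee})(g_{T})\bigr] \\
&= (-1)^{m}\,\mathbb{E}\bigl[F^{\vee}(g_{T})\,\psi^{X_{m},\ldots,X_{1}}(g_{T})\bigr] \\
&= (-1)^{m}\,\mathbb{E}\bigl[F(g_{T}^{-1})\,\psi^{X_{m},\ldots,X_{1}}(g_{T})\bigr] \\
&= (-1)^{m}\,\mathbb{E}\bigl[F(g_{T})\,\psi^{X_{m},\ldots,X_{1}}(g_{T}^{-1})\bigr] \\
&= \mathbb{E}\bigl[F(g_{T})\,\hat{\psi}^{X_{m},\ldots,X_{1}}(g_{T})\bigr],
\end{align*}
which is the desired formula. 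There is no real obstacle here: the argument is a purely formal chain of identities. The only subtle point, which is also the first step, is recognizing the correct intertwining $\hat{X}=-S\tilde{X}S$ (with $S=\vee$); once that is in hand, inversion invariance of $\nu_{T}$ lets the left integration by parts formula do all the work. The integrability of $\hat{\psi}^{X_{m},\ldots,X_{1}}$ in $L^{\infty-}(\nu_{T})$ follows automatically from the corresponding statement for $\psi^{X_{m},\ldots,X_{1}}$, again because $\iota_{*}\nu_{T}=\nu_{T}$.
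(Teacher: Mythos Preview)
Your proof is correct and follows the same approach as the paper: both exploit the identity $(\hat{X}F)(g) = -(\tilde{X}F^{\vee})(g^{-1})$ together with the inversion invariance of $\nu_T$ (Corollary~\ref{inversion-invariant}) to reduce to the left integration-by-parts formula of Corollary~\ref{h.the.7.12}. The only organizational difference is that the paper argues by induction on $m$, whereas you iterate the vector-field identity first to get $\hat{X}_1\cdots\hat{X}_m F = (-1)^m(\tilde{X}_1\cdots\tilde{X}_m F^{\vee})^{\vee}$ and then apply Corollary~\ref{h.the.7.12} in one shot; the content is identical. One small quibble: when you write that the right-derivatives of $F$ are polynomially bounded ``by assumption'', strictly speaking the hypotheses of Corollary~\ref{h.the.7.12} concern only \emph{left}-derivatives, so the passage to right-derivatives (equivalently, left-derivatives of $F^{\vee}$) requires the step-two observation that $\hat{X}$ and $\tilde{X}$ differ by a central derivative with polynomial coefficient --- the paper's proof is equally informal on this point.
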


\begin{proof}
Take $u(g):=F(g^{-1})$. We proceed by induction. Note first that, for any
$g\in G$ and $X\in\mathfrak{g}_{CM}$,
\begin{equation}
\label{h.equ.7.3}(\hat{X}F)(g) = \frac{d}{d\varepsilon}\bigg|_{0}
F(\varepsilon X\cdot g) = \frac{d}{d\varepsilon}\bigg|_{0} u(g^{-1}%
\cdot-\varepsilon X) = -(\tilde{X}u)(g^{-1}).
\end{equation}
This equation and repeated applications of Corollary \ref{inversion-invariant}
give that
\begin{align*}
\mathbb{E}[(\hat{X}F)(g_{T})]  &  = -\mathbb{E}[(\tilde{X} u)(g_{T}^{-1})] =
-\mathbb{E}[(\tilde{X} u)(g_{T})]\\
&  = -\mathbb{E}[u(g_{T})\psi^{X}(g_{T})] = -\mathbb{E}[F(g_{T}^{-1})\psi
^{X}(g_{T})]\\
&  = -\mathbb{E}[F(g_{T})\psi^{X}(g_{T}^{-1})],
\end{align*}
where we have applied Theorem \ref{h.the.7.12} in the third equality. Now
assuming the formula for $m$ and again using equation (\ref{h.equ.7.3}),
Corollary \ref{inversion-invariant}, and Theorem \ref{h.the.7.12} gives
\begin{align*}
\mathbb{E} \left[  (\hat{X}_{1}\cdots\hat{X}_{m+1}F)(g_{T})\right]   &  =
(-1)^{m} \mathbb{E}\left[  (\hat{X}_{m+1}F)(g_{T}) \psi^{X_{m},\ldots,X_{1}%
}(g_{T}^{-1})\right] \\
&  = (-1)^{m+1}\mathbb{E}\left[  (\tilde{X}_{m+1}u)(g_{T}^{-1}) \psi
^{X_{m},\ldots,X_{1}}(g_{T}^{-1})\right] \\
&  = (-1)^{m+1}\mathbb{E}\left[  (\tilde{X}_{m+1}u)(g_{T}) \psi^{X_{m}%
,\ldots,X_{1}}(g_{T})\right] \\
&  = (-1)^{m+1}\mathbb{E}\left[  u(g_{T}) \psi^{X_{m+1},\ldots,X_{1}}
(g_{T})\right] \\
&  = (-1)^{m+1}\mathbb{E}\left[  F(g_{T}^{-1}) \psi^{X_{m+1},\ldots,X_{1}%
}(g_{T})\right] \\
&  = (-1)^{m+1}\mathbb{E}\left[  F(g_{T}) \psi^{X_{m+1},\ldots,X_{1}}%
(g_{T}^{-1})\right]  .
\end{align*}

\end{proof}

\section{Conclusion}

We have shown that the hypoelliptic heat kernel measure $\nu_{T}$ on $G$ is
smooth, in a sense that naturally extends the well-known smoothness results in
finite-dimensions; namely, it is quasi-invariant under left and right
translations by elements of the Cameron--Martin subgroup $G_{CM}$.

In flat abstract Wiener space $(W,H,\mu)$, the smoothness of Gaussian measure
under translation by $H$ (established by the Cameron--Martin theorem) is the
starting point for defining the gradient operator on $L^{2}(W,\mu)$, the
associated Sobolev spaces, chaos decompositions, the Skorohod integral, and
many other developments. Similar results should be possible in our
hypoelliptic setting, and we hope in the future to explore some of this territory.

In this paper, we have considered only groups $G$ whose center $C$ is
finite-dimensional, and our argument makes essential use of this assumption in
several places. It would be interesting to relax this assumption, to allow for
infinite-dimensional centers. For example, the definition of $G$ makes sense
if $C$ is replaced by a separable Hilbert space. However, the lack of a
natural reference measure on $C$ seems to be a significant obstruction to
proving analogous results in this case.

In another direction, it would be interesting to consider a more general class
of groups; for example, nilpotent Lie groups of step 3 or higher.
Unfortunately, in such examples, our approach in Section \ref{h.sec.2} no
longer succeeds; the commutation of terms analogous to $S$ and $L$ may fail.
It appears that new ideas may be required to proceed beyond step 2.

{\bf Acknowledgments.} The authors would like to thank Clinton Conley and Leonard Gross for
helpful discussions, and Fabrice Baudoin for pointing us to the works
of M.~Yor and others cited in Section \ref{h.sec.2}.

\appendix

\section{Another proof of Theorem \ref{h.the.2.1}}
\label{appendix}

In this section, we provide another self-contained proof of Theorem \ref{h.the.2.1},
which is based on the analysis of the infinitesimal generator of
$g_t$.  We will begin with a slightly informal version of the proof and then fill in
the missing technical points.

Recall that $\left\{  B_{t}\right\}  _{t\geq0}$ is an $N$-dimensional
Brownian motion, $A$ is an $N\times N$ skew-symmetric matrix, and we
wish to show for any measurable $f:\mathbb{R}^{N}\rightarrow\mathbb{C}$ such that
$\mathbb{E}\left\vert f\left(  B_{T}\right)  \right\vert <\infty$, we have
\begin{equation}
\mathbb{E}\left[  f\left(  B_{T}\right)  e^{i\int_{0}^{T}AB_{t}\cdot dB_{t}%
}\right]  =\mathbb{E}\left[  f\left(  B_{T}\right)  e^{-\frac{1}{2}\int%
_{0}^{T}\left\vert AB_{t}\right\vert ^{2}dt}\right]  . \label{h.equ.2.1.app}%
\end{equation}

Suppose that $F:\mathbb{R}\times\mathbb{R}^{N}\rightarrow\mathbb{R}$ is a
$C^{2}$-function such that $F$ and its derivatives up to order $2$ have at
most polynomial growth, and set
\[
Y_{t}:=e^{i\int_{0}^{t}AB_{\tau}\cdot dB_{\tau}}.
\]
Then by It\^{o}'s formula
\[
dY_{t}=Y_{t}\left(  iAB_{t}\cdot dB_{t}-\frac{1}{2}\left\vert AB_{t}%
\right\vert ^{2}dt\right)  ,
\]
and%
\begin{multline*}
d\left[  F\left(  t,B_{t}\right)  Y_{t}\right]  =Y_{t}\left(  \nabla
F_{t}\left(  t,B_{t}\right)  +iF\left(  t,B_{t}\right)  AB_{t}\right)  \cdot
dB_{t}\\
+Y_{t}\left(  F_{t}\left(  t,B_{t}\right)  +\frac{1}{2}\Delta F\left(
t,B_{t}\right)  -\frac{1}{2}\left\vert AB_{t}\right\vert ^{2}F\left(
t,B_{t}\right)  +iAB_{t}\cdot\nabla F_{t}\left(  t,B_{t}\right)  \right)  dt.
\end{multline*}
(Here $\nabla F(t,x)$ denotes the gradient with respect to the $x$ variable
only.) By our assumptions on $F$ it is easily verified that
\[
\mathbb{E}\left[  \int_{0}^{T}\left\vert Y_{t}(\nabla F_{t}\left(
t,B_{t}\right)  +iF\left(  t,B_{t}\right)  AB_{t})\right\vert ^{2}dt\right]
<\infty,
\]
and hence $\int_{0}^{\cdot}Y_{t}\left(  \nabla F_{t}\left(  t,B_{t}\right)
+iF\left(  t,B_{t}\right)  AB_{t}\right)  \cdot dB_{t}$ is a square integrable
martingale. Therefore it follows that
\begin{equation}%
\begin{split}
&  \frac{d}{dt}\mathbb{E}\left[  F\left(  t,B_{t}\right)  Y_{t}\right] \\
&  =\mathbb{E}\left[  Y_{t}\left(  F_{t}\left(  t,B_{t}\right)  +\frac{1}%
{2}\Delta F\left(  t,B_{t}\right)  -\frac{1}{2}\left\vert AB_{t}\right\vert
^{2}F\left(  t,B_{t}\right)  +iAB_{t}\cdot\nabla F\left(  t,B_{t}\right)
\right)  \right]  .
\end{split}
\label{e.dt}%
\end{equation}

Now suppose that $f:\mathbb{R}^{N}\rightarrow\mathbb{R}$ and $T>0$ are given
such that there exists a function $F$ as above, with the additional properties
that $F\left(  T,x\right)  =f\left(  x\right)  $ and
\[
F_{t}\left(  t,x\right)  +\frac{1}{2}\Delta F\left(  t,x\right)  -\frac{1}%
{2}\left\vert Ax\right\vert ^{2}F\left(  t,x\right)  +iAx\cdot\nabla F\left(
t,x\right)  =0
\]
for all $\left(  t,x\right)  $. It then follows from (\ref{e.dt}) that
$\frac{d}{dt}\mathbb{E}\left[  F\left(  t,B_{t}\right)  Y_{t}\right]  =0$ and,
in particular,%
\begin{equation}
\mathbb{E}\left[  f\left(  B_{T}\right)  Y_{T}\right]  =\mathbb{E}\left[
F\left(  T,B_{T}\right)  Y_{T}\right]  =\mathbb{E}\left[  F\left(
0,B_{0}\right)  Y_{0}\right]  =F\left(  0,0\right)  . \label{h.equ.2.3}%
\end{equation}
Formally, the solution to (\ref{e.dt}) is given by%
\[
F\left(  t,x\right)  =\left(  e^{\left(  T-t\right)  \left(  L+S\right)
}f\right)  \left(  x\right)
\]
where
\begin{align*}
\left(  Lf\right)  \left(  x\right)   &  :=\frac{1}{2}\Delta f\left(
x\right)  -\frac{1}{2}\left\vert Ax\right\vert ^{2}f\left(  x\right)  \text{
and }\\
\left(  Sf\right)  \left(  x\right)   &  :=iAx\cdot\nabla f\left(  x\right)  .
\end{align*}
With this notation we may summarize (\ref{h.equ.2.3}) as%
\[
\mathbb{E}\left[  f\left(  B_{T}\right)  Y_{T}\right]  =\left(  e^{T\left(
L+S\right)  }f\right)  \left(  0\right)  .
\]

Since $e^{-itS}f\left(  x\right)  =f\left(  e^{tA}x\right)  $ where $e^{tA}$
is a rotation and $\Delta$ is invariant under rotations, it follows that
\[
\left(  e^{-itS}\Delta f\right)  \left(  x\right)  =\left(  \Delta f\right)
\left(  e^{tA}x\right)  =\Delta\left(  f\circ e^{tA}\right)  \left(  x\right)
=\left(  \Delta e^{-itS}f\right)  \left(  x\right)  .
\]
Differentiating this equation in $t$ then shows $\left[  S,\Delta\right]  =0.$
Also,
\begin{align*}
\left(  \left\vert Ay\right\vert ^{2}f\left(  y\right)  \right)
\bigg|_{y=e^{tA}x}  &  =\left\vert Ae^{tA}x\right\vert ^{2}f\left(
e^{tA}x\right)  =\left\vert e^{tA}Ax\right\vert ^{2}f\left(  e^{tA}x\right) \\
&  =\left\vert Ax\right\vert ^{2}f\left(  e^{tA}x\right)  .
\end{align*}
Equivalently, for $M_{g}$ multiplication by $g$, $e^{-itS}M_{\left\vert
A\left(  \cdot\right)  \right\vert ^{2}}=M_{\left\vert A\left(  \cdot\right)
\right\vert ^{2}}e^{-itS}$, and differentiating this at $t=0$ implies%
\[
\left[  S,M_{\left\vert A\left(  \cdot\right)  \right\vert ^{2}}\right]  =0.
\]
Combining this with $\left[  S,\Delta\right]  =0$ allows us to conclude
$\left[  S,L\right]  =0$ and therefore, formally,
\[
\mathbb{E}\left[  f\left(  B_{T}\right)  Y_{T}\right]  =\left(  e^{T\left(
L+S\right)  }f\right)  \left(  0\right)  =\left(  e^{TS}e^{TL}f\right)
\left(  0\right)  =\left(  e^{TL}f\right)  \left(  0\right)
\]
wherein the last equality we have used the fact that $Sf\left(  0\right)  =0$
for all functions $f.$ Hence, by an application of the Feynman--Kac formula, we
conclude that (\ref{h.equ.2.1.app}) holds.

In order to make the above argument rigorous, it is helpful to find
finite-dimensional subspaces of functions which are invariant under the
actions of $L$ and $S.$ So suppose for the moment that $A$ is non-degenerate,
in which case $L$ is a harmonic oscillator Hamiltonian. If $\psi\in
L^{2}\left(  \mathbb{R}^{n}\right)  $ is an eigenvector of $L$ with eigenvalue
$\lambda,$ then
\[
LS\psi=SL\psi=S\lambda\psi=\lambda S\psi.
\]
Given any $\Lambda\in\left(  0,\infty\right)  $, we let $K_{\Lambda}$ be the
linear combination of eigenfunctions of $L$ with eigenvalues $\lambda
\leq\Lambda.$ Then $K_{\Lambda}$ is a finite-dimensional subspace of
$\mathcal{S}\left(  \mathbb{R}^{N}\right)  \subset L^{2}\left(  \mathbb{R}%
^{N}\right)  $ which is invariant under the actions of $L$ and $S.$ For $f\in
K_{\Lambda}$ all of the manipulations in the previous paragraph are justified
and therefore (\ref{h.equ.2.1.app}) holds for all $f\in\cup_{\Lambda<\infty
}K_{\Lambda}$, which is dense in $\mathcal{S}(\mathbb{R}^{N}).$ The full
result for non-degenerate $A$ then follows by density arguments.

We now have a couple of choices for how to proceed when $A\ $is degenerate.
The first is to decompose $\mathbb{R}^{N}$ in $\operatorname*{Nul}\left(
A\right)  \oplus\operatorname*{Nul}\left(  A\right)  ^{\perp}$ and then
decompose the Brownian motion accordingly. With this decomposition it
basically then suffices to prove (\ref{h.equ.2.1.app}) in two cases corresponding
to $A=0$ and to $A$ being non-degenerate. As the case where $A=0$ is a
triviality, the argument is essentially complete. An alternative is to modify
the method of the previous paragraph so as to work for general skew-symmetric
$A$, and this is what we do now.

We start by finding the \textquotedblleft ground state\textquotedblright\ for
$L$ in the form $\Phi\left(  x\right)  =\exp\left(  -\frac{1}{2}\Sigma x\cdot
x\right)  $ for some symmetric non-negative $N\times N$ matrix $\Sigma.$ A
simple computation shows%
\[
2L\Phi=\Phi\cdot\left(  \nabla\cdot\left(  -\Sigma x\right)  +\left\vert
\Sigma x\right\vert ^{2}-\left\vert Ax\right\vert ^{2}\right)  =\Phi
\cdot\left(  -\operatorname*{tr}\left(  \Sigma\right)  +\left\vert \Sigma
x\right\vert ^{2}-\left\vert Ax\right\vert ^{2}\right)  .
\]
Thus taking $\Sigma:=\sqrt{A^{\ast}A}=\sqrt{-A^{2}}$ then implies
$L\Phi=-\frac{1}{2}\operatorname*{tr}\left(  \Sigma\right)  \Phi.$ Further
observe that
\[
S\Phi=iAx\cdot\nabla\Phi=-i\left(  Ax\cdot\Sigma x\right)  \Phi=0,
\]
wherein we have used that $\left[  A,\Sigma\right]  =0$ implies that
\[
Ax\cdot\Sigma x=-x\cdot A\Sigma x=-x\cdot\Sigma Ax=-\Sigma x\cdot Ax,
\]
and hence $Ax\cdot\Sigma x=0.$ Now let $\mathcal{P}_{m}$ denote the space of
polynomial functions on $\mathbb{R}^{N}$ with degrees less than or equal to
$m$, and let $\Phi\mathcal{P}_{m}:=\{\Phi p:p\in\mathcal{P}_{m}\}$. It then
follows that\footnote{The space $\Phi\mathcal{P}_{m}$ is a spectral subspace
as described above in the case that $A$ is non-degenerate.} $\Phi
\mathcal{P}_{m}$ is a finite-dimensional subspace of functions on
$\mathbb{R}^{N}$ which are invariant under the action of both $L$ and $S.$
Indeed, for any $g\in C^{\infty}\left(  \mathbb{R}^{N}\right)  $ we have%
\begin{align*}
L\left(  \Phi g\right)   &  = (L\Phi)g+ \Phi\cdot\frac{1}{2}\Delta
g+\nabla\Phi\cdot\nabla g\\
&  = \Phi\cdot\left(  -\frac{1}{2}\operatorname*{tr}\left(  \Sigma\right)
g+\frac{1}{2}\Delta g+\nabla\ln\Phi\cdot\nabla g\right) \\
&  =\Phi\cdot\left(  \frac{1}{2}\Delta-\Sigma x\cdot\nabla-\frac{1}%
{2}\operatorname*{tr}\left(  \Sigma\right)  \right)  g
\end{align*}
and
\[
S\left(  \Phi g \right)  =\Phi\cdot Sg.
\]
Thus, for any $f\in\Phi\mathcal{P}_{m}$, we have
\[
\mathbb{E}\left[  f\left(  B_{T}\right)  Y_{T}\right]  =\left(  e^{TL}%
f\right)  \left(  0\right)  =\mathbb{E}\left[  f\left(  B_{T}\right)
e^{-\frac{1}{2}\int_{0}^{T}\left\vert AB_{t}\right\vert ^{2}dt}\right]  ,
\]
where the second equality follows from the fact that
\[
d\left[  e^{-\frac{1}{2}\int_{0}^{t}\left\vert AB_{\tau}\right\vert ^{2}d\tau
}\left(  e^{\left(  T-t\right)  L}f\right)  \left(  B_{t}\right)  \right]
=e^{-\frac{1}{2}\int_{0}^{t}\left\vert AB_{\tau}\right\vert ^{2}d\tau}\left(
\nabla e^{\left(  T-t\right)  L}f\right)  \left(  B_{t}\right)  \cdot dB_{t}%
\]
by It\^{o}'s lemma, and thus
\[
\mathbb{E}\left[  e^{-\frac{1}{2}\int_{0}^{t}\left\vert AB_{\tau}\right\vert
^{2}d\tau}\left(  e^{\left(  T-t\right)  L}f\right)  \left(  B_{t}\right)
\right]
\]
is constant in $t$; comparing the values of this expression at $t=T$ and $t=0$
then gives the desired equality.

For any $z\in\mathbb{C}^{N},$
\begin{align*}
\mathbb{E}\left\vert e^{z\cdot B_{T}}-\sum_{n=0}^{K}\frac{\left(  z\cdot
B_{T}\right)  ^{n}}{n!}\right\vert  &  =\mathbb{E}\left\vert \sum
_{n=K+1}^{\infty}\frac{\left(  z\cdot B_{T}\right)  ^{n}}{n!}\right\vert \\
&  \leq\mathbb{E}\left[  \sum_{n=K+1}^{\infty}\frac{\left\vert z\right\vert
^{n}\cdot\left\vert B_{T}\right\vert ^{n}}{n!}\right]  \rightarrow0\text{ as
}K\rightarrow\infty
\end{align*}
by the dominated convergence theorem because the last integrand goes to $0$ as
$K\rightarrow\infty$ and satisfies the estimate%
\[
\sum_{n=K+1}^{\infty}\frac{\left\vert z\right\vert ^{n}\cdot\left\vert
B_{T}\right\vert ^{n}}{n!}\leq e^{\left\vert z\right\vert \left\vert
B_{T}\right\vert }\in L^{1}\left(  P\right)  .
\]
Using the previous observation we may now conclude
\begin{align*}
\mathbb{E}\left[  e^{z\cdot B_{T}}\Phi\left(  B_{T}\right)  Y_{T}\right]   &
=\lim_{K\rightarrow\infty}\mathbb{E}\left[  \sum_{n=0}^{K}\frac{\left(  z\cdot
B_{T}\right)  ^{n}}{n!}\Phi\left(  B_{T}\right)  Y_{T}\right] \\
&  =\lim_{K\rightarrow\infty}\mathbb{E}\left[  \sum_{n=0}^{K}\frac{\left(
z\cdot B_{T}\right)  ^{n}}{n!}\Phi\left(  B_{T}\right)  e^{-\frac{1}{2}%
\int_{0}^{T}\left\vert AB_{t}\right\vert ^{2}dt}\right] \\
&  =\mathbb{E}\left[  e^{z\cdot B_{T}}\Phi\left(  B_{T}\right)  e^{-\frac
{1}{2}\int_{0}^{T}\left\vert AB_{t}\right\vert ^{2}dt}\right]
\end{align*}
holds for all $z\in\mathbb{C}^{N}$. Restricting $z$ to be in $i\mathbb{R}%
^{N},$ we may apply Dynkin's multiplicative system theorem \cite[Appendix A,
p. 309]{Janson1997} in order to show
\[
\mathbb{E}\left[  u\left(  B_{T}\right)  \Phi\left(  B_{T}\right)
Y_{T}\right]  =\mathbb{E}\left[  u\left(  B_{T}\right)  \Phi\left(
B_{T}\right)  e^{-\frac{1}{2}\int_{0}^{T}\left\vert AB_{t}\right\vert ^{2}%
dt}\right]
\]
for all bounded measurable functions $u$ on $\mathbb{R}^{N}$. Given
$f:\mathbb{R}^{N}\rightarrow\mathbb{C}$ such that $\mathbb{E}\left\vert
f\left(  B_{T}\right)  \right\vert <\infty$ and $m\in\mathbb{N}$, apply the
previous formula with $u(x)=u_{m}\left(  x\right)  =\Phi^{-1}f\cdot
1_{\left\vert \Phi^{-1}f\right\vert \leq m}$ to learn%
\[
\mathbb{E}\left[  f\left(  B_{T}\right)  \cdot1_{\left\vert \Phi
^{-1}f\right\vert \left(  B_{T}\right)  \leq m}Y_{T}\right]  =\mathbb{E}%
\left[  f\left(  B_{T}\right)  \cdot1_{\left\vert \Phi^{-1}f\right\vert
\left(  B_{T}\right)  \leq m}e^{-\frac{1}{2}\int_{0}^{T}\left\vert
AB_{t}\right\vert ^{2}dt}\right]  .
\]
Now use the dominated convergence theorem to let $m\rightarrow\infty$ and
finish the proof.

\def\cprime{$'$}
\providecommand{\bysame}{\leavevmode\hbox to3em{\hrulefill}\thinspace}
\providecommand{\MR}{\relax\ifhmode\unskip\space\fi MR }
% \MRhref is called by the amsart/book/proc definition of \MR.
\providecommand{\MRhref}[2]{%
  \href{http://www.ams.org/mathscinet-getitem?mr=#1}{#2}
}
\providecommand{\href}[2]{#2}

\end{document}